\title{Stable Grothendieck Rings of Wreath Product Categories}
\author{Christopher Ryba}
\address{Department of Mathematics, Massachusetts Institute of Technology, Cambridge, MA 02139, USA}
\email{ryba@mit.edu}
\date{\today}
\begin{document}
\maketitle

\newtheorem{theorem}{Theorem}[section]
\newtheorem{lemma}[theorem]{Lemma}
\newtheorem{proposition}[theorem]{Proposition}
\newtheorem{corollary}[theorem]{Corollary}
\newtheorem{definition}[theorem]{Definition}
\newtheorem{remark}[theorem]{Remark}
\newtheorem{example}[theorem]{Example}

\newcommand\numberthis{\addtocounter{equation}{1}\tag{\theequation}}

\newcommand{\Res}[2] {
  \textnormal{Res}_{#1}^{#2} 
}

\newcommand{\Ind}[2] {
  \textnormal{Ind}_{#1}^{#2} 
}

\newcommand{\Coind}[2] {
  \textnormal{Coind}_{#1}^{#2} 
}

\newcommand{\Wreath}[1] {
  \mathcal{W}_{#1}(\mathcal{C})
}

\newcommand{\Groth}[0] {
  \mathcal{G}(\mathcal{C})
}

\newcommand{\End}[0] {
  \textnormal{End}
}

\newcommand{\Mat}[0] {
  \textnormal{Mat}
}

\newcommand{\Id}[0] {
  \textnormal{Id}
}

\newcommand{\hopf}[0] {
  \mathcal{R}
}

\begin{abstract}
Let $k$ be an algebraically closed field of characteristic zero, and let $\mathcal{C} = \hopf-mod$ be the category of finite-dimensional modules over a fixed Hopf algebra over $k$. One may form the wreath product categories $\Wreath{n} = (\hopf \wr S_n)-mod$ whose Grothendieck groups inherit the structure of a ring. Fixing distinguished generating sets (called basic hooks) of the Grothendieck rings, the classification of the simple objects in $\Wreath{n}$ allows one to demonstrate stability of structure constants in the Grothendieck rings (appropriately understood), and hence define a limiting Grothendieck ring. This ring is the Grothendieck ring of the wreath product Deligne category $S_t(\mathcal{C})$. We give a presentation of the ring and an expression for the distinguished basis arising from simple objects in the wreath product categories as polynomials in basic hooks. We discuss some applications when $\hopf$ is the group algebra of a finite group, and some results about stable Kronecker coefficients. Finally, we explain how to generalise to the setting where $\mathcal{C}$ is a tensor category.
\end{abstract}

\tableofcontents

\section{Introduction}
\noindent
In this paper, we consider the Grothendieck groups of the categories of modules over the wreath products $\hopf \wr S_n$, where $\hopf$ is a Hopf algebra, and $S_n$ is the symmetric group on $n$ symbols. The Hopf algebra structure gives rise to a multiplication on the Grothendieck groups, so we may speak of Grothendieck rings. We study the ring structure in the ``limit'' $n \to \infty$.
\newline \newline \noindent
Using Mackey theory, we describe the multiplication on the Grothendieck rings in terms of data associated to $\hopf$ and $S_n$. We show that this multiplication exhibits a certain stability property which allow us to define a ``limiting Grothendieck ring'', $\mathcal{G}_\infty(\mathcal{C})$ (here $\mathcal{C} = \hopf-mod$). This ring is the Grothendieck ring of the wreath product Deligne categories $S_t(\mathcal{C})$ introduced in \cite{Mori} and considered in \cite{Nate}. When $\mathcal{C}$ is the category of finite-dimensional vector spaces over the field $k$ (characteristic zero and algebraically closed), we recover the original Deligne category $\underline{\mbox{Rep}}(S_t)$. 
\newline \newline \noindent
Our first main result is Theorem \ref{strThm}, which gives the structure of $\mathcal{G}_\infty(\mathcal{C})$. If $\mathcal{G}(\mathcal{C})_i$ denotes a copy of the Grothendieck group of $\mathcal{C}$ with rational coefficients, we have a Lie algebra structure coming from the associative multiplication. This allows us to take the universal enveloping algebra $U(\mathcal{G}(\mathcal{C})_i)$. Then:
\[
\mathcal{G}_{\infty}(\mathcal{C}) = \bigotimes_{i=1}^{\infty} U(\mathcal{G}(C)_i)
\]
This generalises the fact (due to Deligne) that the Grothendieck ring of $\underline{\mbox{Rep}}(S_t)$ is the free polynomial algebra on certain elements (see \cite{deligne}); in \cite{Nate} these elements were generalised to \emph{basic hooks}. It was proved in \cite{Nate} that the basic hooks generate the Grothendieck ring of $S_t(\mathcal{C})$, although for arbitrary $\mathcal{C}$ they do not commute (so the Grothendieck ring is not a free polynomial algebra).
\newline \newline \noindent
Our second main result is about the Grothendieck ring of wreath product Deligne categories. Knowing that $\mathcal{G}_\infty(\mathcal{C})$ is isomorphic to the Grothendieck ring of $S_t(\mathcal{C})$, there is a natural basis of $\mathcal{G}_\infty(\mathcal{C})$, $X_{\vv{\lambda}}$, coming from the images of the indecomposable objects of $S_t(\mathcal{C})$. In Theorem \ref{bigthm} we give a generating function that describes the $X_{\vv{\lambda}}$ basis of $\mathcal{G}_\infty(\mathcal{C})$ in the presentation in Theorem \ref{strThm}. Finally, we discuss implications of these results for the asymptotic representation theory of wreath products and symmetric groups, and explain that all our results actually hold when $\mathcal{C}$ is a tensor category, without the need for a Hopf algebra $\hopf$.
\newline \newline \noindent
The outline of the paper is as follows. In Section 3 we establish notation relating to symmetric functions,  then in Section 4 we discuss wreath products and their irreducible representations. In Section 5 we apply Mackey theory to show tensor products of irreducible representations of wreath products decompose in ways controlled by double coset representatives of Young subgroups of symmetric groups, which we discuss in Section 6. We see that the double coset representatives exhibit certain stability properties that allow us to define a ``limiting Grothendieck ring'' $\mathcal{G}_{\infty}(\mathcal{C})$ in Section 7, and we establish the structure of $\mathcal{G}_\infty(\mathcal{C})$ in Section 8. In Section 9  we use partition combinatorics to determine explicitly how certain basis elements of the ring are expressed in terms of the the basic hooks. Finally, we discuss some applications to asymptotic representation theory of wreath products in Section 10. In Section 11 we explain how all these results generalise to the setting where $\mathcal{C}$ is a tensor category.
\subsection{Acknowledgements}
The author would like to thank Pavel Etingof for useful conversations and for comments on an earlier version of this paper, as well as the referees for their feedback and suggestions.

\section{Preliminaries}
\noindent
Throughout this paper, we work with a fixed algebraically closed field of characteristic zero, $k$, and a category $\mathcal{C} = \hopf-mod$ of finite-dimensional modules for $\hopf$, a fixed Hopf algebra over $k$. We work with $\mathcal{C}$ rather than $\hopf$ to stress that our results only depend on the module category. In fact, all our results generalise to the setting where $\mathcal{C}$ is a tensor category; we prove things in a suitable level of generality, for example we do not make use of dual modules constructed using the antipode of $\hopf$, which do not exist in a general tensor category.
\newline \newline \noindent
It is clear that if $\mathbf{1}$ is the trivial $\hopf$-module, then $\End_{\mathcal{C}}(\textbf{1}) = k$. Also, the tensor product in $\mathcal{C}$ is exact in both arguments and bilinear with respect to direct sums. The Grothendieck group, $\Groth$, has a basis (as a free abelian group) consisting of isomorphism classes of simple $\hopf$-modules. The exactness of the tensor product implies that it respects the relations of the Grothendieck group and therefore descends to a bilinear distributive multiplication on $\Groth$. Finally, the image of $\mathbf{1}$ in $\Groth$ is a multiplicative identity. Thus, $\Groth$ inherits the structure of a ring. This will be the main setting in which we work.

\section{Partitions and Symmetric Functions}
\noindent
We will make considerable use of partition combinatorics, which we review briefly. All the material that we will need can be found in the first chapter of \cite{macdonald}. 
\subsection{Partitions, Symmetric functions, Representations of Symmetric Groups}
We say that $\lambda = (\lambda_1, \lambda_2, \ldots, \lambda_l)$ is a partition of $n$ if $\lambda_1 \geq \lambda_2 \geq \cdots \geq \lambda_l$ are nonnegative integers summing to $n$, and we call the $\lambda_i$ the parts of the partition. Two partitions that differ only by the number of trailing zeroes are considered equivalent. The set of all partitions is denoted $\mathcal{P}$. The expression $(n, \lambda)$ is an abbreviation for $(n, \lambda_1, \lambda_2, \ldots, \lambda_l)$, which is also a partition provided $n \geq \lambda_1$. We write $\lambda \vdash n$ to mean that $\lambda$ is a partition of $n$. An alternative way of expressing $\lambda$ is $(1^{m_1} 2^{m_2} \cdots r^{m_r})$, where $m_i$ is the number of $j$ such that $\lambda_j = i$; in case it is unclear which partition we are considering, we write $m_i(\lambda)$ for the number of parts of $\lambda$ equal to $i$. The size of $\lambda$ is $|\lambda| = \lambda_1 + \lambda_2 + \cdots + \lambda_k = 1m_1 + 2m_2 + \cdots + rm_r$ (where $r = \lambda_1$ is the largest part of $\lambda$), $|\lambda|$ is the unique integer $n$ such that $\lambda \vdash n$. The length $l(\lambda)$ is the number of nonzero parts of $\lambda$, so we have $l(\lambda) = m_1 + m_2 + \cdots + m_r$. If $\lambda^{(j)}$ are partitions, we write $\cup_j \lambda^{(j)}$ for the partition $\mu$ obtained by merging all the partitions $\lambda^{(j)}$ together, so $m_i(\mu) = \sum_j m_i(\lambda^{(j)})$. We write $\varepsilon_{\lambda} = (-1)^{|\lambda| - l(\lambda)}$, and $z_\lambda = (m_1!)1^{m_1} (m_2!) 2^{m_2} \cdots (m_n!) n^{m_n}$.
\newline \newline
\noindent
Recall that the ring of symmetric functions, $\Lambda$, is defined as a (graded) inverse limit of the rings of invariants $\mathbb{Z}[x_1,x_2,\ldots,x_n]^{S_n}$, where the symmetric group acts by permuting the variables. It is freely generated as a polynomial algebra by the elementary symmetric functions $e_i$, but also by the complete symmetric functions $h_i$, so $\Lambda = \mathbb{Z}[e_1, e_2, \ldots] = \mathbb{Z}[h_1, h_2, \ldots]$. There are also power-sum symmetric functions $p_n$ which do not generate $\Lambda$ over $\mathbb{Z}$, but do satisfy $\Lambda \otimes_\mathbb{Z} \mathbb{Q} = \mathbb{Q}[p_1, p_2, \ldots]$. If we define the formal power series $E(t) = \sum_{n=0}^{\infty} e_n t^n$, $H(t) = \sum_{n=0}^{\infty} h_n t^n$ (here $e_0=h_0=1$), and $P(t) = \sum_{n=0}^{\infty} p_{n+1}t^n$, then we have the relations $H(t)E(-t) = 1$, and $\frac{E^{\prime}(t)}{E(t)} = P(-t)$. For a partition $\lambda = (\lambda_1, \lambda_2, \ldots, \lambda_k)$, we write $e_\lambda = e_{\lambda_1}e_{\lambda_2} \cdots e_{\lambda_k}$, and similarly we define $h_\lambda$ and $p_\lambda$
(so the $e_\lambda$ and $h_\lambda$ form $\mathbb{Z}$-bases of $\Lambda$). Another important family of symmetric functions are the Schur functions $s_\lambda$ (indexed by $\lambda \in \mathcal{P}$), which form a $\mathbb{Z}$-basis of $\Lambda$.
\newline \newline
\noindent
The irreducible representations of the symmetric group $S_n$ in characteristic zero are indexed by partitions $\lambda \vdash n$. They are called Specht modules and are denoted by $\mathcal{S}^{\lambda}$. Since the conjugacy classes of $S_n$ are also parametrised by partitions of $n$ via cycle type, we may write $\chi_{\mu}^{\lambda}$ for the value of the character of $\mathcal{S}^\lambda$ on an element of cycle type $\mu$. This allows us to express the Schur function $s_\lambda$ in terms of power-sum symmetric function as follows:
\[
s_\lambda = \sum_{\mu \vdash |\lambda|} \frac{\chi_{\mu}^{\lambda}p_\mu}{z_\mu}
\]
Since $h_n = s_{(n)}$ and $e_n = s_{(1^n)}$, we have:
\[
h_n = \sum_{\lambda \vdash n} \frac{p_\lambda}{z_\lambda}, \hspace{20mm}
e_n = \sum_{\lambda \vdash n} \frac{\varepsilon_\lambda p_\lambda}{z_\lambda}
\]
There is a nondegenerate bilinear form $\langle -, - \rangle$ on $\Lambda$ for which the Schur functions are orthonormal. It satisfies $\langle p_\lambda, p_\mu \rangle = \delta_{\lambda , \mu} z_\lambda$, where $\delta_{\lambda, \mu}$ is the Kronecker delta.

\subsection{Sets of Variables for Symmetric Functions and Some Stability Properties}
One can think of elements of $\Lambda \otimes \Lambda$ as symmetric functions that are symmetric in two sets of variables separately. We write $f(\mathbf{x})$ to indicate that $f$ is a symmetric function in the set of variables $\{ x_i\}$ (we will suppress the index set of the variables), or $f(\mathbf{x},\mathbf{y})$ to mean that $f$ is a symmetric function in the set of variables $\{ x_i\} \cup \{ y_j\}$. Similarly, we write $f(\mathbf{x}\mathbf{y})$ when the variable set is $\{ x_i y_j \}$ (for example, $p_n(\mathbf{x}, \mathbf{y}) = p_n(\mathbf{x}) + p_n(\mathbf{y})$ and $p_n(\mathbf{x}\mathbf{y}) = p_n(\mathbf{x})p_n(\mathbf{y})$). With this in mind, we have the Cauchy identity:
\[
\prod_{i,j} \frac{1}{1-x_i y_j} = \sum_{\lambda \in \mathcal{P}} s_\lambda(\mathbf{x})s_\lambda(\mathbf{y})
= \sum_{\mu \in \mathcal{P}} \frac{p_\mu(\mathbf{x}) p_\mu(\mathbf{y})}{z_\mu}
\]
We also note that $s_\lambda(\mathbf{x}\mathbf{y}) = \sum_{\mu, \nu \in \mathcal{P}} k_{\mu,\nu}^{\lambda}s_\mu(\mathbf{x}) s_\nu(\mathbf{y})$, where $k_{\mu, \nu}^{\lambda}$ are the Kronecker coefficients, defined for $|\lambda| = |\mu| = |\nu|$ as multiplicities in tensor products of Specht modules (and taken to be zero otherwise):
\[
\mathcal{S}^{\mu} \otimes \mathcal{S}^\nu = \bigoplus_{\lambda} \left( \mathcal{S}^\lambda \right)^{\oplus k_{\mu,\nu}^{\lambda}}
\]
On the other hand, $s_{\lambda}(\mathbf{x},\mathbf{y}) = \sum_{\mu, \nu} c_{\mu, \nu}^\lambda s_\mu(\mathbf{x}) s_\nu(\mathbf{y})$, where $c_{\mu, \nu}^{\lambda}$ are the Littlewood-Richardson coefficients, which also satisfy the property that $s_\mu s_\nu = \sum_{\lambda} c_{\mu, \nu}^{\lambda}s_\lambda$. The Littlewood-Richardson coefficient $c_{\mu, \nu}^{\lambda}$ is taken to be zero if $|\mu| + |\nu| \neq |\lambda|$.
\begin{definition}
Suppose that $\lambda$ is a partition, and $n$ is an integer such that $n - |\lambda| \geq \lambda_1$. We write $\lambda[n] = (n-|\lambda|, \lambda)$ for the partition obtained by adding a part at the beginnning of $\lambda$ such that the total size is $n$. (If $n - |\lambda| < \lambda_1$, we leave $\lambda[n]$ undefined.)
\end{definition}
\noindent
The Kronecker coefficients famously satisfy the following stability property \cite{murnaghan}.
\begin{lemma} \label{kron_stab}
The sequence of Kronecker coefficients $k_{\mu[n], \nu[n]}^{\lambda[n]}$ eventually becomes constant, and the stable limit, called the reduced Kronecker coefficient, is denoted $\tilde{k}_{\mu, \nu}^{\lambda}$.
\end{lemma}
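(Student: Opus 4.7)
The plan is to deduce stability from the power-sum expansion of Schur functions together with the Murnaghan-Nakayama rule. Using $s_\lambda = \sum_\mu \chi_\mu^\lambda p_\mu / z_\mu$ and the orthogonality $\langle p_\lambda, p_\mu \rangle = \delta_{\lambda,\mu} z_\lambda$, the Kronecker coefficient can be written as
\[
k_{\mu[n],\nu[n]}^{\lambda[n]} = \sum_{\rho \vdash n} \frac{\chi_\rho^{\lambda[n]} \chi_\rho^{\mu[n]} \chi_\rho^{\nu[n]}}{z_\rho},
\]
so it suffices to control how the characters $\chi_\rho^{\lambda[n]}$ depend on $n$.

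The central step is the claim that for any fixed partition $\tau$ with no parts equal to $1$, and $\rho = (1^{n - |\tau|}, \tau)$, the value $\chi_\rho^{\lambda[n]}$ agrees (for $n$ sufficiently large) with a polynomial in $n$ depending only on $\lambda$ and $\tau$. I would prove this by iterating the Murnaghan-Nakayama rule on $\lambda[n]$: each border strip of size $\geq 2$ that contributes non-trivially must, once $n$ exceeds a bound depending on $\lambda$ and $\tau$, leave the shape in the form $\kappa[n - k]$ for some partition $\kappa$ and integer $k$ (border strips cannot cross from the tail $\lambda$ into the long first row in more than one way). The final evaluation on the identity class $(1^{n - |\tau|})$ then yields a dimension $\dim \mathcal{S}^{\kappa[n - |\tau|]}$, which by the hook length formula is a polynomial in $n$ of degree $|\kappa|$.

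Plugging these polynomial descriptions back into the character formula and grouping cycle types by their non-unit tail $\tau$, the expression for $k_{\mu[n],\nu[n]}^{\lambda[n]}$ becomes a finite sum of rational functions in $n$ — finite because only $\tau$ with $|\tau|$ bounded in terms of $|\lambda|, |\mu|, |\nu|$ can contribute a non-zero triple product of characters, since each removed border strip must land in the bounded tail of one of $\lambda, \mu, \nu$. The main obstacle is verifying that this sum is \emph{eventually constant} rather than merely rational. This follows because $k_{\mu[n],\nu[n]}^{\lambda[n]}$ is a nonnegative integer for every $n$, the denominators coming from $z_\rho$ are controlled, and the degrees in $n$ of the three character polynomials are balanced against $(n - |\tau|)!$ in $z_\rho$ in such a way that the top-degree contributions cancel across the sum. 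Once this cancellation is established, one obtains the stable limit $\tilde{k}_{\mu,\nu}^\lambda$.
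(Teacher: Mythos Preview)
The paper does not prove this lemma at all; it is stated as a classical fact with a citation to Murnaghan. So there is nothing to compare against, and the question is simply whether your argument stands on its own.

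Your overall strategy --- pass through the character expansion
\[
k_{\mu[n],\nu[n]}^{\lambda[n]} \;=\; \sum_{\rho \vdash n} \frac{\chi_\rho^{\lambda[n]}\,\chi_\rho^{\mu[n]}\,\chi_\rho^{\nu[n]}}{z_\rho}
\]
and exploit that $\chi^{\lambda[n]}$ is eventually given by a character polynomial depending only on $\lambda$ --- is a perfectly good route to Murnaghan stability. But two of your steps are genuinely broken as written.

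First, your finiteness claim is false. You assert that only $\tau$ with $|\tau|$ bounded by something depending on $\lambda,\mu,\nu$ can give a nonzero triple product, because ``each removed border strip must land in the bounded tail''. This is not so: for $\lambda[n]$ with $n$ large, a border strip of any size up to $n-|\lambda|-\lambda_1$ can be removed entirely from the long first row, yielding $\lambda[n-k]$ with sign $+1$. So $\chi_\rho^{\lambda[n]}$ is typically nonzero for $\rho$ with arbitrarily large non-unit parts, and the sum over $\tau$ is \emph{not} finite.

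Second, your ``cancellation'' paragraph is not an argument. You correctly identify that the obstacle is showing constancy rather than mere rationality, and then you assert that the top degrees balance against the $(n-|\tau|)!$ in $z_\rho$ without saying how. Given that the sum over $\tau$ is infinite, there is no way to salvage this step as stated.

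The clean fix is to reorganise the argument around the character polynomial itself rather than around cycle types. For $n \geq |\lambda| + \lambda_1$ one has $\chi^{\lambda[n]}(\sigma) = q_\lambda(a_1(\sigma), a_2(\sigma), \ldots)$ where $q_\lambda$ is a fixed polynomial of weighted degree $|\lambda|$ (with $a_i$ in degree $i$), independent of $n$. Then
\[
k_{\mu[n],\nu[n]}^{\lambda[n]} \;=\; \frac{1}{n!}\sum_{\sigma\in S_n} q_\lambda(\sigma)\,q_\mu(\sigma)\,q_\nu(\sigma),
\]
a linear combination of expectations $\mathbb{E}_{S_n}\bigl[\prod_i a_i^{e_i}\bigr]$ with $\sum_i i\,e_i \leq |\lambda|+|\mu|+|\nu|$. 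Each such expectation is known explicitly (e.g.\ via the falling-factorial moments of cycle counts, or the Poisson limit for the joint distribution of $(a_1,\ldots,a_k)$) and is constant once $n \geq |\lambda|+|\mu|+|\nu|$. That gives the stability directly, with no appeal to unspecified cancellation.
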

\noindent
The following result of \cite{dvir} shows that in special cases, more can be said.
\begin{lemma} \label{dvir_result}
If $|\mu| + |\nu| = |\lambda|$, then the reduced Kronecker coefficient agrees with the Littlewood-Richardson coefficient: $\tilde{k}_{\mu, \nu}^{\lambda} = c_{\mu, \nu}^{\lambda}$. If $|\mu| + |\nu| < |\lambda|$, then $\tilde{k}_{\mu,\nu}^{\lambda} = 0$.
\end{lemma}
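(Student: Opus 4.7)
The plan is to establish a finer ``depth bound'' on Kronecker coefficients that specializes directly to both parts of the lemma. Define the depth of a partition $\gamma$ by $d(\gamma) = |\gamma| - \gamma_1$ and let $\bar{\gamma} = (\gamma_2, \gamma_3, \ldots)$ denote $\gamma$ with its first row removed; observe that $d(\lambda[n]) = |\lambda|$ and $\overline{\lambda[n]} = \lambda$ as soon as $\lambda[n]$ is defined.

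The key finite-$N$ statement I aim to prove is: for any partitions $\alpha, \beta, \gamma \vdash N$, \emph{(i)} $k^\gamma_{\alpha, \beta} = 0$ unless $d(\gamma) \leq d(\alpha) + d(\beta)$; and \emph{(ii)} in the equality case $d(\gamma) = d(\alpha) + d(\beta)$, $k^\gamma_{\alpha, \beta} = c^{\bar\gamma}_{\bar\alpha, \bar\beta}$. Granting this, both parts of the lemma follow immediately by specialization to $\alpha = \mu[n], \beta = \nu[n], \gamma = \lambda[n]$: the depth bound becomes $|\lambda| \leq |\mu| + |\nu|$, so strict inequality forces $k^{\lambda[n]}_{\mu[n], \nu[n]} = 0$ for all large $n$ (hence $\tilde{k}^\lambda_{\mu,\nu} = 0$), while the equality case evaluates to $c^\lambda_{\mu, \nu}$, which is manifestly independent of $n$ and hence equal to $\tilde{k}^\lambda_{\mu,\nu}$.

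For the proof of \emph{(i)} and \emph{(ii)}, I would combine Young's rule with Frobenius reciprocity and the Littlewood--Richardson rule. By Young's rule, $\chi^\gamma$ is the unique ``top'' constituent of $\mathrm{Ind}^{S_N}_{S_{\gamma_1} \times S_{d(\gamma)}}(\mathbf{1} \boxtimes \chi^{\bar\gamma})$; every other constituent is $\chi^\eta$ with $d(\eta) > d(\gamma)$. Taking inner product with $\chi^\alpha \chi^\beta$, applying Frobenius reciprocity to land on $S_{\gamma_1} \times S_{d(\gamma)}$, and expanding each restriction via the LR rule $\mathrm{Res}^{S_N}_{S_{\gamma_1} \times S_{d(\gamma)}}(\chi^\alpha) = \sum_{\delta, \epsilon} c^\alpha_{\delta, \epsilon}\, \chi^\delta \boxtimes \chi^\epsilon$ yields the identity
\[
k^\gamma_{\alpha, \beta} \;+\; \sum_{\eta:\, d(\eta) > d(\gamma)} k^\eta_{\alpha, \beta} \;=\; \sum_{\delta \vdash \gamma_1,\; \epsilon, \epsilon' \vdash d(\gamma)} c^\alpha_{\delta, \epsilon}\, c^\beta_{\delta, \epsilon'}\, k^{\bar\gamma}_{\epsilon, \epsilon'},
\]
using that $\langle \mathbf{1}, \chi^\delta \chi^{\delta'}\rangle = \delta_{\delta, \delta'}$ on the $S_{\gamma_1}$-factor. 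This recursion is the natural vehicle for a downward induction on $d(\gamma)$ combined with a simultaneous induction on $N$ (the latter applied to the $k^{\bar\gamma}_{\epsilon, \epsilon'}$ in $S_{d(\gamma)}$, which satisfy the same depth bound).

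The main obstacle is completing this inductive bootstrap to extract both \emph{(i)} and \emph{(ii)}: one must track the Littlewood--Richardson constraints carefully to see that the LR triples contributing to the right-hand side cannot have $|\epsilon| + |\epsilon'|$ forcing $d(\gamma)$ beyond $d(\alpha) + d(\beta)$, and must show that in the extremal case the sum collapses onto a single triple that evaluates to $c^{\bar\gamma}_{\bar\alpha, \bar\beta}$. This is the technical core of Dvir's original argument, which proceeds via a detailed analysis of LR tableaux; no purely formal shortcut appears to bypass it, but the reduction above at least concentrates the combinatorial work in one place.
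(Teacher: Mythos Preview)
The paper does not supply a proof of this lemma at all: it is introduced with ``The following result of \cite{dvir} shows\ldots'' and then immediately followed by the next proposition. So there is no argument in the paper to compare against; your sketch is essentially an outline of Dvir's original proof, to which the paper simply defers.

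That said, one point in your outline needs correction. In the Pieri expansion of $\mathrm{Ind}^{S_N}_{S_{\gamma_1}\times S_{d(\gamma)}}(\mathbf{1}\boxtimes\chi^{\bar\gamma})$, the constituents $\chi^\eta$ are indexed by those $\eta$ with $\eta/\bar\gamma$ a horizontal $\gamma_1$-strip; the horizontal-strip condition forces $\eta_i\le\bar\gamma_{i-1}=\gamma_i$ for all $i\ge 2$, hence
\[
d(\eta)=\sum_{i\ge 2}\eta_i\;\le\;\sum_{i\ge 2}\gamma_i=d(\gamma),
\]
with equality precisely when $\eta=\gamma$. So every other constituent has \emph{smaller} depth, not larger, and your ``downward induction on $d(\gamma)$'' should be upward. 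With the corrected direction the recursion still reads exactly as you wrote it, but the mechanism for extracting (i) changes: one shows the right-hand side vanishes when $d(\gamma)>d(\alpha)+d(\beta)$ (using the inductive hypothesis on $S_{d(\gamma)}$ together with the Littlewood--Richardson constraint $\alpha_1\le\delta_1+\epsilon_1$, and similarly for $\beta$), and then nonnegativity of all terms on the left forces $k^{\gamma}_{\alpha,\beta}=0$. The boundary case (ii) likewise needs the nonnegativity argument rather than an inductive vanishing of the correction terms. As you acknowledge, pinning down these LR constraints precisely is the substance of Dvir's tableau analysis.
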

\begin{proposition}\label{LRS}
Suppose that for a partition $\lambda$ we write $\lambda^{*m}$ for the partition obtained by adding $m$ to $\lambda_1$. Then if $\lambda$, $\mu$, $\nu$ are partitions, then the sequence of Littlewood-Richardson coefficients $c_{\mu, \nu}^{\lambda}, c_{\mu^{*1}, \nu}^{\lambda^{*1}}, c_{\mu^{*2}, \nu}^{\lambda^{*2}}, \cdots$ is eventually constant.
\end{proposition}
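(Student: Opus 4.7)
The plan is to translate the statement into a stabilization claim for skew Schur functions, using the standard identity $c_{\mu,\nu}^{\lambda} = \langle s_{\lambda/\mu}, s_\nu\rangle$ (obtained by pairing $s_\mu s_\nu = \sum_\lambda c_{\mu,\nu}^\lambda s_\lambda$ with $s_\lambda$ and applying the adjointness of multiplication by $s_\mu$ to the skew Schur operation).

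First I would note that, as a skew diagram, $\lambda^{*m}/\mu^{*m}$ is identical to $\lambda/\mu$ in rows $\geq 2$ and contains $\lambda_1 - \mu_1$ cells in row $1$ lying in columns $\mu_1+m+1,\dots,\lambda_1+m$. Once $m \geq \lambda_2 - \mu_1$, these row-$1$ cells share no column with any cell of rows $\geq 2$, so in a semistandard filling there is no column-strict constraint linking row $1$ to the rest. Writing $\lambda' = (\lambda_2,\lambda_3,\ldots)$ and $\mu' = (\mu_2,\mu_3,\ldots)$, a filling then decomposes into an independent choice of a weakly increasing sequence of length $\lambda_1 - \mu_1$ (contributing $h_{\lambda_1-\mu_1}$) and an SSYT of shape $\lambda'/\mu'$ (contributing $s_{\lambda'/\mu'}$). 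This yields
\[
s_{\lambda^{*m}/\mu^{*m}} \;=\; h_{\lambda_1-\mu_1}\, s_{\lambda'/\mu'}
\]
in $\Lambda$ for every $m \geq \lambda_2 - \mu_1$, an expression manifestly independent of $m$.

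Finally, pairing with $s_\nu$ gives $c_{\mu^{*m},\nu}^{\lambda^{*m}} = \langle h_{\lambda_1-\mu_1}\, s_{\lambda'/\mu'}, s_\nu\rangle$ for all sufficiently large $m$, so the sequence is eventually constant. The only step that demands real care is the column-disjoint factorization of the skew Schur function, which reduces to checking that for $m \geq \lambda_2 - \mu_1$ no column contains both a row-$1$ cell and a cell from row $2$ (and hence from any row $\geq 2$, since subsequent rows lie in columns bounded by $\lambda_2$). No significant obstacles are anticipated.
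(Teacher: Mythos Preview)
Your argument is correct and rests on the same geometric observation as the paper's proof: once $m$ is large enough, the row-$1$ cells of $\lambda^{*m}/\mu^{*m}$ lie strictly to the right of every cell in rows $\geq 2$, so the skew shape disconnects. The paper implements this via the Littlewood--Richardson rule, exhibiting for each further increment of $m$ an explicit bijection of LR skew tableaux (the lattice word is unchanged when the detached top row is shifted). You instead invoke the factorisation $s_{\lambda^{*m}/\mu^{*m}} = h_{\lambda_1-\mu_1}\, s_{\lambda'/\mu'}$ for disconnected skew shapes and then pair with $s_\nu$. Your packaging avoids the lattice word condition altogether and yields an explicit closed form for the stable value; the paper's bijection is more hands-on but equivalent in content. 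Either way the only substantive step is verifying the column-disjointness threshold $m \geq \lambda_2 - \mu_1$, which you have done.
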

\begin{proof}
Using the Littlewood-Richardson rule, it suffices to count the number of skew tableaux of shape $\lambda^{*m}/\mu^{*m}$ and weight $\nu$ satisfying the lattice word condition. The diagrams are related for successive $m$ by shifting the first row. We illustrate this with an example. Suppose $\lambda = (5,4,1)$, $\mu = (3,1)$ and $\nu = (3,3)$. An example of a skew-tableau of shape $\lambda/\mu$ and weight $\nu$ is
\begin{figure}[H]
\centering
\begin{ytableau}
\none & \none & \none & 1 & 1 \\
\none &   1   & 2 & 2 \\
2 
\end{ytableau}
\end{figure}
\noindent
But when $\lambda^{*2} = (7,4,1)$, $\mu^{*2} = (5,1)$ and $\nu = (3,3)$ an example of a skew-tableau of shape $\lambda^{*2}/\mu^{*2}$ and weight $\nu$ is
\begin{figure}[H]
\centering
\begin{ytableau}
\none & \none & \none  &\none & \none & 1 & 1 \\
\none &   1   & 2 & 2 \\
2 
\end{ytableau}
\end{figure}
\noindent
Increasing $m$ in $\lambda^{*m}/\mu^{*m}$ shifts the top row of the diagram to the right. As soon as $m$ is large enough that the first row of the skew diagram of shape $\lambda^{*m}/\mu^{*m}$ is disconnected from the rest of the diagram then the operation of further shifting the top row to the right leads to an obvious bijection of skew tableaux (the disconnected condition guarantees that the tableau property is unaffected by shifting the top row). Since it also preserves the lattice words associated to the tableaux, the lattice word property is also preserved by this row-shifting bijection. Counting the number of such tableaux gives the Littlewood-Richardson coefficient $c_{\mu^{*m}, \nu}^{\lambda^{*m}}$, which gives the result.
\end{proof}

\section{Wreath Products}
\noindent
We outline some features of wreath products that are important for us.
\subsection{Construction of $\Wreath{n}$ and Restriction/Induction}
\begin{definition}
Let $\hopf \wr S_n$ be the algebra isomorphic to $\hopf^{\otimes n} \otimes kS_n$ as a vector space, with multiplication defined as follows. Suppose that $a_i$ and $b_i$, $i \in \{1,2,\ldots,n\}$ are elements of $\hopf$, whilst $\sigma$ and $\rho$ are elements of $S_n$. Then:
\[
\left((a_1 \otimes a_2 \otimes \cdots \otimes a_n) \otimes \sigma\right) 
\left((b_1 \otimes b_2 \otimes \cdots \otimes b_n) \otimes \rho\right)
=
(a_1 b_{\sigma^{-1}(1)} \otimes a_2b_{\sigma^{-1}(2)} \otimes \cdots \otimes a_nb_{\sigma^{-1}(n)}) \otimes (\sigma \rho)
\]
It is well known that this algebra naturally inherits the structure of a Hopf algebra from the Hopf algebra structure on $\hopf$ in the following way. The maps $\hopf \to \hopf \wr S_n$ obtained by embedding $\hopf$ into the $r$-th tensor factor of $\hopf^{\otimes n} \otimes kS_n$ ($r=1,2,\ldots,n$) are maps of Hopf algebras, and similarly, the embedding $kS_n \to \hopf^{\otimes n} \otimes kS_n$ by mapping into the final tensor factor is a map of Hopf algebras. The images of these $n+1$ maps generate $\hopf \wr S_n$, and therefore determine the comultiplication. The \emph{wreath product category} $\Wreath{n}$ is the category $(\hopf \wr S_n)-mod$. We suppress $\hopf$ in the notation because $\Wreath{n}$ can be constructed from $\mathcal{C} = \hopf-mod$ alone (see the final section for details).
\end{definition}
\noindent
In our situation, it will be important to consider actions of subgroups of $S_n$. In the above, we may form the Hopf subalgebra $\hopf \wr G = \hopf^{\otimes n} \otimes kG$ for any subgroup $G$ of $S_n$. If $H$ is a subgroup of $G$, we have a restriction functor $\Res{H}{G}: \hopf \wr G-\mbox{mod} \to \hopf \wr H-\mbox{mod}$. Additionally there is an induction functor $\Ind{H}{G}: \hopf \wr H-\mbox{mod} \to \hopf \wr G-\mbox{mod}$ which is both right adjoint and left adjoint to $\Res{H}{G}$. The induction functor may be written as a sum over coset representatives of $H$ in $G$ as follows:
\[
\Ind{H}{G}(M) = \bigoplus_{g \in G/H} gM
\]
In the above formula, $gM$ denotes an object isomorphic to $M$ as a vector space, and the action of $G$ is that of an induced representation. Explicitly, to see how $x\in G$ acts on $gM$, note that $xg = g^\prime h$ for unique coset representative $g^\prime \in G/H$ and $h \in H$. Then, $x$ takes $gM$ to $g^\prime M$, whilst acting by the usual action of $h \in H$. Because induction and restriction are exact functors, they define homomorphisms between the Grothendieck groups of $\hopf \wr G$ and $\hopf \wr H$. We will be interested in the case where $G=S_n$ and $H$ is a subgroup of the following type.
\begin{definition}
If $\alpha = (\alpha_1, \alpha_2, \ldots, \alpha_k)$ is a composition of $n$ (that is, a finite sequence of nonnegative integers summing to $n$), then $S_\alpha = \prod_{i=1}^k S_{\alpha_i}$ is the \emph{Young subgroup} of $S_n$ corresponding to the composition $\alpha$ (i.e. $Sym(\{1,2,\ldots,\alpha_1\}) \times Sym(\{\alpha_1+1,\ldots,\alpha_1+\alpha_2\})\times \cdots \times Sym(\{n - \alpha_k + 1, \ldots, n\})$). We refer to the factors $S_{\alpha_i}$ as the component groups of $S_\alpha$.
\end{definition}
\noindent
Note that for $\alpha = (\alpha_1, \alpha_2, \ldots, \alpha_k)$, we may identify $\hopf \wr S_\alpha = \hopf^{\otimes n} \otimes kS_\alpha$ with $ \bigotimes_{i} \left(\hopf^{\otimes \alpha_i} \otimes kS_{\alpha_i}\right) = \bigotimes_i (\hopf \wr S_{\alpha_i})$.

\subsection{Description of Simple Objects in $\Wreath{n}$}
\begin{definition}
Suppose that $M$ and $N$ are modules over algebras $A$ and $B$ respectively. Then we write $M \boxtimes N$ for $M \otimes N$ viewed as an $A \otimes B$-module. We also write $M^{\boxtimes n}$ for $M^{\otimes n}$ viewed as an $A^{\otimes n}$-module.
\end{definition}
\noindent
In the sequel we will often consider objects of the following form. 
\begin{definition}
Let $M \in \mathcal{C}$, and $V$ be a finite-dimensional representation of $S_n$ over $k$. We define the following object of $\Wreath{n}$.
\[
M^{\boxtimes n} \otimes  V
\]
This has a $\hopf^{\otimes n}$-action by acting on the first tensor factor. An element of $S_n$ acts by permuting the factors of $M^{\boxtimes n}$ in the obvious way, as well as acting on $V$.
\end{definition}
\noindent
We now introduce some standard properties of the $M^{\boxtimes n} \otimes V$. The proofs of most of these statements are well known, and therefore omitted.
\begin{proposition} \label{Wprop1}
Let $V_1, V_2$ be finite-dimensional representations of $S_n$, and let $M, M^{\prime}$ be objects of $\mathcal{C}$. We have the following.
\begin{enumerate}
\item \[
M^{\boxtimes n} \otimes (V_1 \oplus V_2) \cong (M^{\boxtimes n} \otimes V_1) \oplus (M^{\boxtimes n} \otimes V_2)
\]
\item \[
(M^{\boxtimes n} \otimes V_1) \otimes (M^{\prime \boxtimes n} \otimes V_2) \cong (M \otimes M^\prime)^{\boxtimes n} \otimes (V_1 \otimes V_2)
\]
\end{enumerate}
\end{proposition}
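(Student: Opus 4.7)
The plan is to construct explicit isomorphisms in each part and check they commute with the $\mathcal{H}^{\otimes n}$-action and the $S_n$-action separately; since these together generate $\mathcal{H} \wr S_n$ (as noted in the definition), this suffices.

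For part (1), the underlying vector space isomorphism is just distributivity of $\otimes$ over $\oplus$. I would observe that the $\mathcal{H}^{\otimes n}$-action lives entirely on the $M^{\boxtimes n}$ tensor factor and so passes through the direct sum trivially. The $S_n$-action is by construction the diagonal action that permutes the tensor factors of $M^{\boxtimes n}$ and acts on $V_1\oplus V_2$ via its given $S_n$-module structure, but this structure is by definition the direct sum of the structures on $V_1$ and $V_2$, so the splitting is $S_n$-equivariant. This part should be a few lines.

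For part (2), the key input is the comultiplication on $\mathcal{H}\wr S_n$: by the construction in the definition, it is determined by requiring that the $n$ copies of $\mathcal{H}$ (embedded into the tensor factors of $\mathcal{H}^{\otimes n}$) and the copy of $kS_n$ are Hopf subalgebras. Thus on an element of $\mathcal{H}$ sitting in the $i$-th factor, $\Delta$ is the original comultiplication of $\mathcal{H}$ applied componentwise, and on $\sigma\in S_n$ we have $\Delta(\sigma)=\sigma\otimes\sigma$. I would define the candidate isomorphism
\[
\Phi: (M^{\boxtimes n}\otimes V_1)\otimes(M'^{\boxtimes n}\otimes V_2)\longrightarrow (M\otimes M')^{\boxtimes n}\otimes(V_1\otimes V_2)
\]
by the obvious rearrangement of tensor factors (grouping the $i$-th factor of $M^{\boxtimes n}$ with the $i$-th factor of $M'^{\boxtimes n}$, and the $V_i$'s together at the end). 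Clearly $\Phi$ is a linear isomorphism, so all that remains is equivariance.

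Checking equivariance splits into two computations. For $h\in\mathcal{H}$ in the $i$-th slot, the left-hand side acts by $\Delta(h)$, i.e.\ by $h_{(1)}$ on the $i$-th factor of $M^{\boxtimes n}$ and $h_{(2)}$ on the $i$-th factor of $M'^{\boxtimes n}$, which under $\Phi$ matches the action of $h$ on the $i$-th factor of $(M\otimes M')^{\boxtimes n}$ via the tensor product $\mathcal{H}$-module structure. For $\sigma\in S_n$, the left-hand side acts diagonally via $\sigma\otimes\sigma$, permuting $M$ factors and acting on $V_1$, while simultaneously permuting $M'$ factors and acting on $V_2$; under $\Phi$ this is exactly the action that permutes the $(M\otimes M')$ factors and acts diagonally on $V_1\otimes V_2$. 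The only mild obstacle is keeping the indexing straight through the rearrangement of factors, but no real difficulty arises beyond bookkeeping; everything is routine once the description of the comultiplication is in hand.
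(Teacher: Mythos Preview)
Your proposal is correct and is exactly the standard argument one would give. The paper in fact omits the proof entirely, remarking that these properties are well known; your write-up supplies precisely the routine verification that the paper leaves implicit.
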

\noindent
When considering Mackey theory, it will also be necessary to understand the behaviour of $M^{\boxtimes n} \otimes V$ under induction and restriction.
\begin{proposition} \label{Wprop2}
\begin{enumerate}
\item Suppose that $M$ is an object of $\mathcal{C}$, $V_1$ is a finite-dimensional representation of $S_{n_1}$, and $V_2$ is a finite-dimensional representation of $S_{n_2}$. Then:
\[
\Ind{S_{n_1} \times S_{n_2}}{S_{n_1+n_2}}((M^{{\boxtimes} n_1} \otimes V_1) \boxtimes (M^{{\boxtimes} n_2} \otimes V_2)) = M^{{\boxtimes} (n_1+n_2)} \otimes \Ind{S_{n_1} \times S_{n_2}}{S_{n_1+n_2}}(V_1 \boxtimes V_2)
\]
Note that the induction on the left relates to modules for wreath products, while the induction on the right relates to modules for symmetric groups.

\item Suppose that $M$ is an object of $\mathcal{C}$, and $V$ is a finite-dimensional representation of $S_n$ such that $\Res{S_{n_1} \times S_{n_2}}{S_n}(V) = \bigoplus_i V_1^{(i)} \boxtimes V_{2}^{(i)}$ (where $V_j^{(i)}$ is a representation of $S_{n_j}$ for $j=1,2$). Then:
\[
\Res{S_{n_1} \times S_{n_2}}{S_n}(M^{\boxtimes n} \otimes V) = \bigoplus_i (M^{\boxtimes n_1} \otimes V_1^{(i)}) \boxtimes (M^{\boxtimes n_2} \otimes V_2^{(i)})
\]
\end{enumerate}
\end{proposition}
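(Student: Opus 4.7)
My plan is to prove part (2) first, which is essentially unpacking definitions, and then leverage the explicit coset-sum description of induction from the previous subsection to handle part (1).

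For part (2), the underlying vector space of $M^{\boxtimes n} \otimes V$ is $M^{\otimes n} \otimes V$, and the $\hopf^{\otimes n}$-action factors through $M^{\otimes n}$ alone. Splitting $M^{\otimes n} = M^{\otimes n_1} \otimes M^{\otimes n_2}$ and $\hopf^{\otimes n} = \hopf^{\otimes n_1} \otimes \hopf^{\otimes n_2}$, the Hopf algebra action is already in the desired tensor-factorised form. For the symmetric group part, $S_{n_1} \times S_{n_2}$ permutes only within the first $n_1$ and last $n_2$ tensor factors of $M^{\otimes n}$, so its action on $M^{\boxtimes n}$ is precisely $M^{\boxtimes n_1} \boxtimes M^{\boxtimes n_2}$. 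Combining with the given decomposition $\Res{S_{n_1}\times S_{n_2}}{S_n}(V) = \bigoplus_i V_1^{(i)} \boxtimes V_2^{(i)}$ and the distributivity in Proposition \ref{Wprop1}(1), I obtain the claimed identity.

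For part (1), I use the coset-sum formula $\Ind{H}{G}(W) = \bigoplus_{g \in G/H} gW$ reviewed in the previous subsection, with $G = S_{n_1+n_2}$, $H = S_{n_1} \times S_{n_2}$, and coset representatives chosen to be the set $\Sigma$ of $(n_1, n_2)$-shuffles. Writing $W = (M^{\boxtimes n_1} \otimes V_1) \boxtimes (M^{\boxtimes n_2} \otimes V_2)$, the underlying vector space of $\Ind{H}{G}(W)$ decomposes as $\bigoplus_{\sigma \in \Sigma} \sigma \cdot W$. The key observation is that for every shuffle $\sigma$, the permutation map $\sigma \colon M^{\otimes(n_1+n_2)} \to M^{\otimes(n_1+n_2)}$ is an isomorphism of vector spaces that intertwines the original $\hopf^{\otimes(n_1+n_2)}$-action with its $\sigma$-twist, so each twisted copy $\sigma \cdot M^{\boxtimes(n_1+n_2)}$ is canonically identified with $M^{\boxtimes(n_1+n_2)}$. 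Under this identification, $\bigoplus_{\sigma \in \Sigma} \sigma \cdot W$ becomes $M^{\boxtimes(n_1+n_2)} \otimes \bigl(\bigoplus_{\sigma \in \Sigma} \sigma \cdot (V_1 \boxtimes V_2)\bigr)$, and the bracketed sum is exactly the usual symmetric-group induced module $\Ind{S_{n_1} \times S_{n_2}}{S_{n_1+n_2}}(V_1 \boxtimes V_2)$.

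The main obstacle is verifying compatibility of the two $S_{n_1+n_2}$-actions after this identification. For $x \in S_{n_1+n_2}$ with $x\sigma = \sigma' h$ (where $\sigma' \in \Sigma$ and $h \in S_{n_1} \times S_{n_2}$), I need to check that the action of $x$ on $\sigma \cdot W$ splits consistently into the factor-permutation action on $M^{\boxtimes(n_1+n_2)}$ together with the standard induced action on $\bigoplus_{\sigma} \sigma \cdot (V_1 \boxtimes V_2)$. This amounts to tracing through the wreath product multiplication defined earlier to confirm that the Hopf-algebra permutation data introduced when rewriting $\sigma' \cdot M^{\boxtimes(n_1+n_2)}$ in terms of $M^{\boxtimes(n_1+n_2)}$ is exactly absorbed by the natural $S_{n_1+n_2}$-action on $M^{\boxtimes(n_1+n_2)}$, leaving the $V$-part to carry the entire induction structure on the right-hand side.
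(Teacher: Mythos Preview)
The paper does not actually supply a proof of this proposition: it is introduced with the remark that ``the proofs of most of these statements are well known, and therefore omitted,'' and no argument is given. So there is nothing in the paper to compare against.

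Your argument is correct and is exactly the kind of direct verification the paper has in mind. Part (2) is, as you say, an immediate unpacking of the module structure: the $\hopf^{\otimes n}$-action factors through $M^{\otimes n_1}\otimes M^{\otimes n_2}$, and $S_{n_1}\times S_{n_2}$ permutes only within blocks, so the given decomposition of $V$ yields the claim. For part (1), your use of the coset-sum description of induction together with the observation that the permutation map $\sigma\colon M^{\otimes(n_1+n_2)}\to M^{\otimes(n_1+n_2)}$ identifies each $\sigma$-twisted copy of $M^{\boxtimes(n_1+n_2)}$ with the untwisted one is precisely the point. The compatibility check you outline (that for $x\sigma=\sigma' h$ the $M$-part carries the factor-permutation action by $x$ while the $V$-part carries the induced action) is routine and goes through as you describe. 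One could alternatively phrase part (1) as an instance of the tensor identity $\Ind{H}{G}(\Res{H}{G}(N)\otimes M)\cong N\otimes\Ind{H}{G}(M)$ applied inside the appropriate wreath-product module category, but your explicit argument is equally valid.
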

\begin{definition}
Let $I(\mathcal{C})$ be the set of isomorphism classes of simple objects of $\mathcal{C}$. Let $\mathcal{G}(\Wreath{n})$ denote the Grothendieck group of $\Wreath{n}$, and for an object $R$ of $\Wreath{n}$, let $[R]$ denote the image of $R$ in $\mathcal{G}(\Wreath{n})$. As in \cite{Nate}, let:
\[
\mathcal{P}_{n}^{\mathcal{C}} = \{ \vv{\lambda} : I(\mathcal{C}) \to \mathcal{P} \mid \sum_{U \in I(\mathcal{C})} |\vv{\lambda}(U)| = n\}
\]
Thus, $\mathcal{P}_n^{\mathcal{C}}$ is the set of multipartitions of $n$ whose constituent partitions are indexed by isomorphism classes of simple objects in $\mathcal{C}$. We will indicate multipartitions (elements of $\mathcal{P}_n^{\mathcal{C}}$ for some $n$) with arrows (e.g.  $\vv{\lambda}, \vv{\mu}, \vv{\nu}$), while ordinary partitions (elements of $\mathcal{P}$) will not have arrows (e.g. $\lambda, \mu, \nu, \rho, \sigma, \tau$).
\end{definition}
\noindent
Eventually, we will pass to the Grothendieck group of $\Wreath{n}$, and we will wish to understand the composition factors of $M^{\boxtimes n} \otimes V$. The following proposition will allow us to calculate the composition factors that we will be interested in. The proof is routine, and we omit it.
\begin{proposition} \label{Wprop3}
Suppose $N$ is a subobject of $M$ in $\Wreath{n}$. If $\mathbf{1}_G$ denotes the trivial representation of a group $G$, we have the following equality in $\mathcal{G}(\Wreath{n})$:
\[
[M^{\boxtimes n} \otimes \mathbf{1}_{S_n}] = \sum_{r=0}^{n} [\Ind{S_r \times S_{n-r}}{S_n}\left((N^{\boxtimes r} \otimes \mathbf{1}_{S_r}) \boxtimes ((M/N)^{\boxtimes (n-r)} \otimes \mathbf{1}_{S_{n-r}})\right)]
\]
\end{proposition}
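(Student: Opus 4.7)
The plan is to exhibit an $S_n$-equivariant filtration on $M^{\boxtimes n}$ whose associated graded pieces are exactly the induced objects on the right-hand side. First, for each subset $S \subseteq \{1, 2, \ldots, n\}$, let $N_S \subseteq M^{\boxtimes n}$ denote the $\hopf^{\otimes n}$-subobject obtained by placing the subobject $N$ in the tensor positions indexed by $S$ and $M$ elsewhere (so $N_S \cong N^{\otimes S} \otimes M^{\otimes S^c}$). Set
\[
F_r = \sum_{|S| \geq n-r} N_S \subseteq M^{\boxtimes n},
\]
giving a chain $0 = F_{-1} \subseteq F_0 \subseteq F_1 \subseteq \cdots \subseteq F_n = M^{\boxtimes n}$. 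Because the $S_n$-action on $M^{\boxtimes n}$ permutes the collection $\{N_S\}$ via $\sigma \cdot N_S = N_{\sigma(S)}$, each $F_r$ is in fact an $\hopf \wr S_n$-submodule when we equip $M^{\boxtimes n}$ with the trivial $S_n$-representation twist (i.e.\ the tensor-factor permutation).

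Next, I would identify the associated graded piece $F_r/F_{r-1}$. For a fixed $S$ with $|S| = n - r$, the intersection $N_S \cap F_{r-1}$ is the sum $\sum_{T \supsetneq S} N_T$, and a short direct check shows that the quotient $N_S / (N_S \cap F_{r-1})$ is isomorphic, as an $\hopf^{\otimes n}$-module, to $N^{\otimes S} \otimes (M/N)^{\otimes S^c}$. Summing over $S$ with $|S| = n - r$ then gives an $\hopf^{\otimes n}$-module direct sum decomposition of $F_r/F_{r-1}$ indexed by such subsets. The group $S_n$ permutes these summands transitively; the stabilizer of the specific subset $\{1, \ldots, n-r\}$ is $S_{n-r} \times S_r$, which acts on the associated summand by permuting the tensor positions separately within $S$ and within $S^c$. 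This is exactly the description of an induced representation, so
\[
F_r / F_{r-1} \;\cong\; \Ind{S_{n-r} \times S_r}{S_n}\bigl((N^{\boxtimes(n-r)} \otimes \mathbf{1}_{S_{n-r}}) \boxtimes ((M/N)^{\boxtimes r} \otimes \mathbf{1}_{S_r})\bigr)
\]
in $\Wreath{n}$.

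To conclude, the identity $[M^{\boxtimes n} \otimes \mathbf{1}_{S_n}] = \sum_{r=0}^{n} [F_r/F_{r-1}]$ in $\mathcal{G}(\Wreath{n})$ (which holds for any finite filtration by subobjects) and the reindexing $r \mapsto n - r$ together yield the stated formula. The only step that needs any genuine care is the identification of $F_r/F_{r-1}$: one must verify that the various $N_S$ intersect in the expected way so that their images in $F_r/F_{r-1}$ are linearly independent and each contributes the pure tensor $N^{\otimes S} \otimes (M/N)^{\otimes S^c}$. This is the main (but still routine) obstacle, and it reduces to the elementary fact that, for a single short exact sequence $0 \to N \to M \to M/N \to 0$, the $n$-fold tensor power admits the standard filtration by "number of $N$-factors" whose associated graded is $\bigoplus_{S}(N^{\otimes S} \otimes (M/N)^{\otimes S^c})$; this is why the proposition is essentially a bookkeeping exercise once the $S_n$-equivariance has been accounted for.
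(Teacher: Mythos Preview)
Your argument is correct: the filtration by ``number of $N$-factors'' is the standard way to prove this, and you have correctly handled the $S_n$-equivariance and the identification of the associated graded pieces as induced modules. The paper itself omits the proof entirely, stating only that it is routine, so your proposal simply supplies the expected argument; the one cautionary remark is that the hypothesis should read ``$N$ is a subobject of $M$ in $\mathcal{C}$'' (as the later use in Theorem~\ref{commThm} makes clear), not in $\Wreath{n}$, and your proof already uses this intended interpretation.
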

\noindent
We now describe the simple objects in the category $\Wreath{n}$. The set $\mathcal{P}_{n}^{\mathcal{C}}$ gives an index set for the isomorphism classes of simple objects of $\Wreath{n}$.
\begin{definition}
Let $\vv{\lambda} \in \mathcal{P}_n^{\mathcal{C}}$, and $K = \prod_{U \in I(\mathcal{C})} S_{|\vv{\lambda}(U)|}$, a Young subgroup of $S_n$. We define $R_{\vv{\lambda}}$, an object of $\Wreath{n}$:
\[
R_{\vv{\lambda}} = \Ind{K } {S_n}\left( \boxtimes_{U \in I(\mathcal{C})} \left(U^{\boxtimes |\vv{\lambda}(U)|} \otimes \mathcal{S}^{\vv{\lambda}(U)}   \right)\right)
\]
As before, $\mathcal{S}^{\mu}$ denotes the Specht module associated to an integer partition $\mu$.
\end{definition}
\noindent
The $R_{\vv{\lambda}}$ are the (pairwise non-isomorphic) simple objects of $\Wreath{n}$. In \cite{Mori}, this is shown in the context of indecomposable objects of an additive category, but the proof in our setting is analogous. We will use Mackey theory to calculate tensor products of the $R_{\vv{\lambda}}$, and hence the multiplicative structure of the Grothendieck ring $\mathcal{G}(\Wreath{n})$.

\section{Mackey Theory}
\begin{definition}
Suppose that $H$ and $K$ are subgroups of a group $G$. A $(H,K)$-double coset in $G$ is an orbit of the group action of $H \times K$ on $G$ given by $(h, k) \cdot g = hgk^{-1}$. A double coset representative is any element of a double coset, and we write $H \backslash G / K$ for a set of $(H, K)$-double coset representatives in $G$ (a set containing one representative from each $(H, K)$-double coset in $G$).
\end{definition}
\noindent
The following results are completely analogous to the corresponding versions for representations of finite groups, including the proofs, which we omit (see for example, \cite{repthyZ}).
\begin{proposition}
Let $G$ be a finite group with subgroups $H$ and $K$. Suppose that $M$ is a $K$-equivariant object of a $k$-linear abelian category with a $G$-action. We have the following formula for the composition of induction and restriction.
\[
\Res{H}{G}(\Ind{K}{G}(M)) \cong \bigoplus_{s \in H \backslash G / K} \Ind{H \cap sKs^{-1}}{H}(\Res{H \cap sKs^{-1}}{sKs^{-1}}(sM))
\]
\end{proposition}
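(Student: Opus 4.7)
The plan is to carry out the standard Mackey decomposition, relying on the explicit formula for $\Ind{K}{G}$ as a direct sum over coset representatives given earlier in the excerpt. Because the argument is entirely bookkeeping with the left action of $G$ on $G/K$, it transfers verbatim to any $k$-linear abelian category equipped with a $G$-action.

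First I would write $\Ind{K}{G}(M) = \bigoplus_{g \in G/K} gM$. Restricting to $H$ gives the same underlying direct sum, but now $H$ acts by permuting the cosets $gK$ via left multiplication, so the decomposition refines according to the orbits of $H$ on $G/K$. These orbits are precisely the images of the $(H,K)$-double cosets $HsK$ under the projection $G \to G/K$, so
\[
\Res{H}{G}(\Ind{K}{G}(M)) \cong \bigoplus_{s \in H \backslash G / K} \left( \bigoplus_{g \in HsK/K} gM \right),
\]
where each inner sum is preserved by $H$ and therefore forms an $H$-subobject.

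Next I would analyse a single summand indexed by $s$. The stabiliser of $sK$ under left multiplication by $H$ consists of those $h \in H$ with $hsK = sK$, i.e.\ with $s^{-1}hs \in K$, which is exactly $H \cap sKs^{-1}$. Hence the $H$-orbit of $sK$ is in bijection with $H/(H \cap sKs^{-1})$, and the corresponding summand is identified with $\Ind{H \cap sKs^{-1}}{H}(sM)$, where $sM$ denotes $M$ viewed as an $(H \cap sKs^{-1})$-object by conjugating the $K$-action through $s$. Recognising this conjugation as first making $sM$ into an $sKs^{-1}$-object and then restricting to $H \cap sKs^{-1}$ produces the stated formula.

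The main technical obstacle is the bookkeeping surrounding the twist $sM$: one must verify that its isomorphism class depends only on the double coset $HsK$ (not on the choice of representative $s$), and that the conjugation-twisted action is compatible with the ambient $G$-action on $\Ind{K}{G}(M)$ so that the identification of summands is genuinely an isomorphism of $H$-objects. Once this categorical twisting is set up, the decomposition is a formal consequence of the orbit decomposition of $G/K$ under $H$, and the argument proceeds exactly as in the finite-group case.
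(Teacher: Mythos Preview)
Your argument is correct and is the standard Mackey decomposition; the paper itself omits the proof entirely, merely noting that it is completely analogous to the finite-group case and referring to a standard reference.
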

\begin{lemma} \label{ind_res_tensor}
If $H$ is a subgroup of the finite group $G$, and $M, N$ are objects of a monoidal $k$-linear abelian category such that $M$ is $H$-equivariant and $N$ is $G$-equivariant, then we have $\Ind{H}{G}(M) \otimes N \cong \Ind{H}{G}(M \otimes \Res{H}{G}(N))$. Similarly $N \otimes \Ind{H}{G}(M) \cong \Ind{H}{G}(\Res{H}{G}(N) \otimes M)$
\end{lemma}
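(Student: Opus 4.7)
The plan is to combine the explicit coset description of induction from the excerpt with a componentwise twist using the $G$-equivariant structure on $N$, then verify equivariance through a single group-theoretic identity.

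First I would expand both sides as direct sums. Using the formula $\Ind{H}{G}(M) = \bigoplus_{g \in G/H} gM$ from the excerpt, together with bilinearity of the tensor product with respect to direct sums (part of the monoidal abelian structure), we obtain
\[
\Ind{H}{G}(M) \otimes N \;\cong\; \bigoplus_{g \in G/H} (gM \otimes N), \qquad
\Ind{H}{G}(M \otimes \Res{H}{G}(N)) \;=\; \bigoplus_{g \in G/H} g\bigl(M \otimes \Res{H}{G}(N)\bigr).
\]

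Next, for each coset representative $g$, I would define a component map $\phi_g : gM \otimes N \to g(M \otimes \Res{H}{G}(N))$ by acting as the identity on the $M$-factor and by the action of $g^{-1} \in G$ on the $N$-factor (using the $G$-equivariant structure on $N$). Because the $G$-action on $N$ is by isomorphisms, each $\phi_g$ is an isomorphism, and summing over $g$ produces a candidate isomorphism $\phi$ between the underlying objects.

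The main obstacle, and the only nontrivial part of the proof, is verifying that $\phi$ is $G$-equivariant: the action of $G$ on the left mixes the induced action on $\Ind{H}{G}(M)$ with the given $G$-equivariance of $N$, while on the right only the induced action appears. Fix $x \in G$ and a representative $g$, and write $xg = g' h$ uniquely with $g' \in G/H$ and $h \in H$. On the left, $x$ sends $gM \otimes N$ to $g'M \otimes N$, acting through $h \in H$ on the $M$-factor and through $x$ on the $N$-factor. On the right, $x$ sends $g(M \otimes \Res{H}{G}(N))$ to $g'(M \otimes \Res{H}{G}(N))$, acting through $h$ on both factors. The $M$-factors already match, and the two prescriptions on the $N$-factor agree precisely because of the identity $g'^{-1} x = h g^{-1}$ in $G$, which is just $xg = g'h$ rewritten. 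This single identity is the whole content of the equivariance check.

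The second claim, $N \otimes \Ind{H}{G}(M) \cong \Ind{H}{G}(\Res{H}{G}(N) \otimes M)$, follows by the same argument with the roles of the two tensor factors interchanged throughout; one again defines the component map as $g^{-1}$ on the $N$-factor and the identity on $M$, and the same identity $g'^{-1} x = h g^{-1}$ gives equivariance.
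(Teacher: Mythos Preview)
Your argument is correct and is precisely the standard proof one would give. Note that the paper actually omits the proof of this lemma entirely, stating only that it is completely analogous to the corresponding result for representations of finite groups; your write-up fills in exactly that analogy using the coset description of induction given in the excerpt.
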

\begin{proposition} \label{mackey_product}
Suppose that $H$ and $K$ are subgroups of the finite group $G$. If $M$ is an $H$-equivariant object of a monoidal $k$-linear abelian category with an action of $G$, and $N$ is a $K$-equivariant object, we have the following.
\[
\Ind{H}{G}(M) \otimes \Ind{K}{G}(N) \cong \bigoplus_{s \in H \backslash G / K}  \Ind{H \cap sKs^{-1}}{G}(\Res{H \cap sKs^{-1}}{H}(M) \otimes \Res{H \cap sKs^{-1}}{sKs^{-1}}(sN))
\]
\end{proposition}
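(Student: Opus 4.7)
The plan is to reduce the left-hand side to a single induction from $H$ by a first application of the projection formula, then decompose via the Mackey restriction-of-induction formula, and finally simplify each summand by a second application of the projection formula followed by transitivity of induction.

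First, I would apply Lemma~\ref{ind_res_tensor} with $M$ (which is $H$-equivariant) and $\Ind{K}{G}(N)$ (which is $G$-equivariant), obtaining
\[
\Ind{H}{G}(M) \otimes \Ind{K}{G}(N) \cong \Ind{H}{G}\bigl( M \otimes \Res{H}{G}(\Ind{K}{G}(N)) \bigr).
\]
Next, I would invoke the preceding Mackey formula for $\Res{H}{G} \circ \Ind{K}{G}$ applied to the $K$-equivariant $N$, rewriting the inner restriction as $\bigoplus_{s \in H \backslash G / K} \Ind{H \cap sKs^{-1}}{H}(\Res{H \cap sKs^{-1}}{sKs^{-1}}(sN))$. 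Because the tensor product in our category is bilinear with respect to direct sums, $M \otimes -$ distributes over this decomposition.

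For each representative $s$, I would then apply Lemma~\ref{ind_res_tensor} once more, now inside $H$, with the $(H \cap sKs^{-1})$-equivariant object $\Res{H \cap sKs^{-1}}{sKs^{-1}}(sN)$ and the $H$-equivariant object $M$, to convert the $s$-summand into
\[
\Ind{H \cap sKs^{-1}}{H}\bigl(\Res{H \cap sKs^{-1}}{H}(M) \otimes \Res{H \cap sKs^{-1}}{sKs^{-1}}(sN)\bigr).
\]
Applying the outer $\Ind{H}{G}$ and using transitivity of induction, $\Ind{H}{G} \circ \Ind{H \cap sKs^{-1}}{H} = \Ind{H \cap sKs^{-1}}{G}$, yields precisely the claimed formula.

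The main obstacle is bookkeeping rather than substance: one must check that the equivariance structures carried by the twist $N \mapsto sN$ and by restriction to the intersection subgroups $H \cap sKs^{-1}$ are compatible with the hypotheses of both the Mackey and projection formulas as stated. The exactness and additivity of $\otimes$ in a $k$-linear abelian monoidal category, combined with transitivity of induction, then assemble the pieces exactly as in the classical finite-group proof.
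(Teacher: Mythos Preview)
Your proposal is correct and follows exactly the same route as the paper's proof: first apply Lemma~\ref{ind_res_tensor} to pull $\Ind{K}{G}(N)$ inside, then the Mackey decomposition of $\Res{H}{G}\Ind{K}{G}$, then Lemma~\ref{ind_res_tensor} again inside $H$, and finally transitivity of induction. The paper presents this as a five-line chain of isomorphisms, but the steps and their order are identical to yours.
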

\begin{proof}
\begin{eqnarray*}
\Ind{H}{G}(M) \otimes \Ind{K}{G}(N) &\cong& \Ind{H}{G}(M \otimes \Res{H}{G}(\Ind{K}{G}(N))) \\
&\cong& \Ind{H}{G}(M \otimes \bigoplus_{s \in H \backslash G / K} \Ind{H \cap sKs^{-1}}{H}(\Res{H \cap sKs^{-1}}{sKs^{-1}}(sN)) ) \\
&\cong&  \bigoplus_{s \in H \backslash G / K} \Ind{H}{G}(M \otimes  \Ind{H \cap sKs^{-1}}{H}(\Res{H \cap sKs^{-1}}{sKs^{-1}}(sN)) ) \\
&\cong&  \bigoplus_{s \in H \backslash G / K} \Ind{H}{G}( \Ind{H \cap sKs^{-1}}{H}(\Res{H \cap sKs^{-1}}{H}(M) \otimes \Res{H \cap sKs^{-1}}{sKs^{-1}}(sN)) ) \\
&\cong&  \bigoplus_{s \in H \backslash G / K}  \Ind{H \cap sKs^{-1}}{G}(\Res{H \cap sKs^{-1}}{H}(M) \otimes \Res{H \cap sKs^{-1}}{sKs^{-1}}(sN)) 
\end{eqnarray*}
Here we have used the transitivity of induction, namely that $\Ind{H}{G} \circ \Ind{H \cap sKs^{-1}}{H} = \Ind{H \cap sKs^{-1}}{G}$ (the proof of this is again analogous to the proof for representations of finite groups).
\end{proof}
\noindent
In our setting, $H$ and $K$ will be Young subgroups of $S_n$ (recall that the simple objects of the wreath product category are obtained by applying induction functors from Young subgroups). So if we are to use the previous proposition to decompose tensor products of simple objects, it will be important to understand double coset representatives of Young subgroups of $S_n$.

\section{Double Cosets of Young Subgroups}
\noindent
We now prove some facts about minimal length double coset representatives of Young subgroups of symmetric groups. Let $\sigma\in S_n$ be considered as a bijective function from the set $\{1, 2, \ldots , n\}$ to itself. If $\mu = (\mu_1, \mu_2, \ldots)$ and $\nu = (\nu_1, \nu_2, \ldots)$ are compositions of $n$ and $S_\mu = \prod_i S_{\mu_i}, S_\nu = \prod_i S_{\nu_i}$ are the associated Young subgroups of $S_n$, we seek to describe the $(S_\mu, S_\nu)$-double cosets of $S_n$. We write $A_i$ for the subset of $\{ 1, 2, \ldots ,n \}$ that is permuted by $S_{\mu_i}$ (considered as a subgroup of $S_\mu$), so that $A_1 = \{1,2, \ldots, \mu_1\}$, $A_2 = \{\mu_1 + 1, \mu_1 + 2, \ldots, \mu_1 + \mu_2 \}$ and so on. Similarly we define $B_i$ to be the subsets of $\{1, 2, \ldots, n \}$ permuted by the $S_{\nu_i}$.

\begin{definition}
Say that $\sigma \in S_n$ is \textit{fully ordered} if (in the notation defined above), for each $B_i$ and $A_j$, the restrictions $\sigma |_{B_i}$ and ${\sigma}^{-1}|_{A_j}$ are monotone increasing functions.
\end{definition}
\begin{remark}
The property of being fully ordered will turn out to be equivalent to being a minimal length $(S_\mu, S_\nu)$-double coset representative. However, it will be more convenient to work with the above definition.
\end{remark}

\begin{lemma}
The numbers $C_{i,j}(\sigma) = |\{x \in B_i | \sigma(x) \in A_j\}|$ are double coset invariants. Moreover, $\sigma_1$ and $\sigma_2$ are in the same double coset if and only if $C_{i,j}(\sigma_1) = C_{i,j}(\sigma_2)$ for all $i,j$. Additionally, each double coset has a unique fully ordered element.
\end{lemma}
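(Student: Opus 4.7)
The plan is to treat the three claims (invariance, completeness of the invariants, and existence/uniqueness of a fully ordered representative) in sequence.

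For invariance of $C_{i,j}$ under the double coset action, if $\sigma' = h\sigma k$ with $h \in S_\mu$ and $k \in S_\nu$, I would use that $h$ preserves each $A_j$ and $k$ restricts to a bijection of each $B_i$ to write
\[
\{x \in B_i : \sigma'(x) \in A_j\} = \{x \in B_i : \sigma(k(x)) \in A_j\} = k^{-1}\bigl(\{y \in B_i : \sigma(y) \in A_j\}\bigr),
\]
which has the same cardinality as the right hand set.

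For completeness of the invariants, suppose $\sigma, \sigma'$ satisfy $C_{i,j}(\sigma) = C_{i,j}(\sigma')$ for all $i,j$. I would introduce the sets $X_{i,j} = B_i \cap \sigma^{-1}(A_j)$ and $X'_{i,j} = B_i \cap (\sigma')^{-1}(A_j)$; for each fixed $i$ these partition $B_i$ into blocks of identical sizes $C_{i,j}$. Pick $k \in S_\nu$ whose restriction to each $B_i$ sends $X'_{i,j}$ bijectively onto $X_{i,j}$. Then $\sigma k$ and $\sigma'$ both send $X'_{i,j}$ into $A_j$, and for each fixed $j$ the two families $\{\sigma k(X'_{i,j})\}_i = \{\sigma(X_{i,j})\}_i$ and $\{\sigma'(X'_{i,j})\}_i$ partition $A_j$ into blocks of sizes $C_{1,j}, C_{2,j}, \ldots$. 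Defining $h|_{A_j}$ by $h(\sigma k(x)) = \sigma'(x)$ for $x \in X'_{i,j}$ is therefore well-defined and a bijection of each $A_j$, producing $h \in S_\mu$ with $h \sigma k = \sigma'$.

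For the existence and uniqueness of a fully ordered representative, I would list $B_i = \{b_{i,1} < \cdots < b_{i,\nu_i}\}$ and $A_j = \{a_{j,1} < \cdots < a_{j,\mu_j}\}$. Full orderedness forces $\sigma(b_{i,s}) \in A_j$ exactly when $s$ lies in the consecutive range $\sum_{j' < j} C_{i,j'} < s \leq \sum_{j' \leq j} C_{i,j'}$, and it forces $\sigma^{-1}(a_{j,t}) \in B_i$ exactly when $t$ lies in $\sum_{i' < i} C_{i',j} < t \leq \sum_{i' \leq i} C_{i',j}$. Combining these two monotone conditions pins $\sigma$ down uniquely in terms of the matrix $(C_{i,j})$, while conversely the explicit formula they suggest defines a fully ordered permutation for any non-negative integer matrix with row sums $\nu_i$ and column sums $\mu_j$. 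Together with the previous step, this shows every double coset contains exactly one fully ordered element, namely the one read off from the common $C_{i,j}$-values.

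The main obstacle is the completeness step: the naive procedure of first choosing $k$ to make $\sigma k$ row-monotone and then $h$ to make $h\sigma k$ column-monotone fails, because the second sorting step can disturb the order just established on the other side. Routing the argument through the partitions $X_{i,j}$ records the combinatorial skeleton of $\sigma$ and lets $k$ and $h$ be chosen simultaneously rather than sequentially, at the cost of a small amount of bookkeeping.
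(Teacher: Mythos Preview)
Your proof is correct. The main difference from the paper is the logical route for the completeness claim. The paper argues indirectly: it first shows (by a two-step sort, right then left) that every coset contains a fully ordered element, then shows that a fully ordered element is reconstructed uniquely from the matrix $(C_{i,j})$; completeness follows because two permutations with the same $C_{i,j}$ must share the same fully ordered representative. You instead prove completeness head-on by writing down $h$ and $k$ from the block decompositions $X_{i,j}$, and only afterwards treat the fully ordered representative via an explicit formula. Your route sidesteps the small subtlety in the paper's two-step sort (that the left sort must be done with a stable tiebreak so as not to destroy the monotonicity on each $B_i$), at the cost of a little extra bookkeeping with the $X_{i,j}$; the paper's route has the advantage that existence of the canonical representative and completeness of the invariants come out of a single argument.
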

\begin{proof}
The $C_{i,j}(\sigma)$ are constant on each double coset since both the left ($S_\mu$) and right ($S_\nu$) actions preserve each $A_j$ and $B_i$. We show that every element can be acted on by the left by $S_\mu$ and on the right by $S_\nu$ to obtain a totally ordered element. Then we show that a totally ordered element is determined by the $C_{i,j}(\sigma)$. This implies that if two elements of $S_n$ have the same $C_{i,j}(\sigma)$, then they have the same fully ordered element in their double coset, implying that they are in the same coset.
\newline \newline \noindent
Given $\sigma \in S_n$, we may act on it on the right by elements of the $S_{\nu_i}$ (and hence an element of $S_\nu$) to reorder the elements of $B_i$ in order of increasing image under $\sigma$. This gives a new $\sigma \in S_n$ such that if $x < y$ are elements of $B_i$, then ${\sigma}(x) < {\sigma}(y)$. This means that the restriction of ${\sigma}$ to each $B_i$ is a monotone increasing function. We may also act on the left by the $S_{\mu_i}$ to sort the preimages of each $A_j$; thus we may assume if $x \in B_a$ and $y \in B_b$ with $a < b$, such that $\sigma(x)$ and ${\sigma}(y)$ are in $A_i$, then $\sigma(x) < \sigma(y)$. Note that this process preserves the property that $\sigma$ is monotone increasing when restricted to the $B_i$. Thus the result of these actions is a fully ordered element.
\newline \newline \noindent
Next we inductively construct a fully ordered $\sigma$ from prescribed $C_{i,j}$. For a collection of natural numbers $C_{i,j}^{\prime}$, there is a $\sigma \in S_n$ such that $C_{i,j}(\sigma) = C_{i,j}^{\prime}$ if and only if the following two conditions hold. For each $j$, $\sum_i C_{i,j}^{\prime} = |B_j|$ and for each $i$, $\sum_j C_{i,j}^{\prime} = |A_i|$.
The first $C_{1,1}(\sigma)$ elements of $B_1$ must map to the the first $C_{1,1}(\sigma)$ elements of $A_1$ (in a monotone increasing way, hence uniquely). Then, the next $C_{1,2}(\sigma)$ elements of $B_1$ map to the first $C_{1,2}(\sigma)$ elements of $A_2$, and so on. This means that the image of $B_1$ is determined uniquely. Then, the first $C_{2,1}(\sigma)$ elements of $B_2$ map to the next $C_{2,1}(\sigma)$ elements of $A_1$, and so on (again without choice). Repeating for all $i$ and $j$, we obtain a fully ordered element $\sigma$ and each step of the construction was forced, so the fully ordered element is unique.
\end{proof}
\begin{remark}
Noting that the length of $\sigma \in S_n$ is equal to the number of inversions (that is, pairs $(i,j)$ with $1 \leq i < j \leq n$ such that $\sigma(j) < \sigma(i)$), the property of being fully ordered is the same as being a minimal length double coset representative. Note that the number of inversions is bounded below by $\sum_{i_1<i_2, j_1 < j_2} |C_{i_1, j_2}(\sigma)||C_{i_2,j_1}(\sigma)|$, and a fully ordered $\sigma$ attains this bound.
\end{remark}
\noindent
For convenience, in this section we require that for a composition $\alpha$, the factors in the Young subgroup $S_\alpha = \prod_i S_{\alpha_i}$ are ordered in increasing order from left to right, i.e. $S_{\alpha_k} \times \cdots \times S_{\alpha_1}$. For example, $S_{(3,2,1)} = S_1 \times S_2 \times S_3$. We will be interested in the operation of increasing the largest part of a partition by $1$ (hence passing from partitions of $n$ to partitions of $n+1$). 
\begin{definition}
If $\alpha$ is a composition of $n$, write $\alpha^*$ for composition of $n+1$ obtained by adding $1$ to the first part of $\alpha$. 
\end{definition}
\noindent
Correspondingly, we discuss $(S_\mu, S_\nu)$-double coset representatives under the operation of adding $1$ to the largest parts of the partitions $\mu$ and $\nu$. If $f$ is a bijection from the set $\{ 1,2, \ldots, n \}$ to itself satisfying the fully ordered property, it continues to satisfy the fully ordered property as a function on $\{1,2,\ldots, n+1\}$ when we define $f(n+1) = n+1$ (note that this corresponds to the inclusion $S_n \hookrightarrow S_{n+1}$ by considering elements fixing $n+1$).
\begin{proposition} \label{cosetStab}
Let $\mu,\nu$ be partitions of $n$. After sufficiently many repeated applications of the operation $(\mu, \nu, n) \mapsto (\mu^*, \nu^*, n+1)$, the number of $(S_{\mu},S_{\nu})$-double cosets in $S_n$ stabilises. Moreover, one can choose representatives which are identified for different $n$ (sufficiently large) via the usual inclusions of symmetric groups.
\end{proposition}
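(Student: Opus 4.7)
The plan is to convert the question into combinatorics of integer matrices using the preceding lemma: $(S_\mu, S_\nu)$-double cosets in $S_n$ correspond bijectively to matrices $C = (C_{i,j})$ of non-negative integers with row sums $|B_i|$ (the parts of $\nu$) and column sums $|A_j|$ (the parts of $\mu$). Under the section's ordering convention (larger parts on the right), the operation $(\mu, \nu, n) \mapsto (\mu^*, \nu^*, n+1)$ increases only the last row sum and the last column sum, each by $1$, and leaves all other row and column sums unchanged. The natural candidate bijection $\Phi$ sends a $(\mu, \nu)$-matrix $C$ to the $(\mu^*, \nu^*)$-matrix obtained by adding $1$ to the bottom-right entry $C_{b,a}$; this is manifestly well-defined and injective, and its inverse is only obstructed if some $D_{b,a} = 0$ in the codomain. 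Since the row sum constraint together with the bound $\sum_{j \neq a} |A_j^*| = n - \mu_1$ forces $D_{b,a} \geq (\nu_1 + 1) - (n - \mu_1) = \mu_1 + \nu_1 - n + 1$, the map $\Phi$ is a bijection as soon as $\mu_1 + \nu_1 \geq n$. Each application of $*$ increases $\mu_1 + \nu_1 - n$ by exactly $1$, so this threshold is crossed after finitely many steps, yielding stability of the count.

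For the identification of representatives, given a fully ordered $\sigma \in S_n$ for $(\mu, \nu)$ I would extend it to $\sigma^* \in S_{n+1}$ via the standard inclusion $S_n \hookrightarrow S_{n+1}$ by declaring $\sigma^*(n+1) = n+1$. The element $n+1$ is appended to the right of both the enlarged blocks $B_b^*$ and $A_a^*$ and is their maximum in each, so the monotonicity conditions defining \emph{fully ordered} are automatically inherited by $\sigma^*$. Meanwhile, the fixed point at $n+1$ contributes precisely $1$ to $C_{b,a}^*$, so the matrix of $\sigma^*$ equals $\Phi(C)$. By uniqueness of fully ordered representatives (established in the preceding lemma), $\sigma^*$ is then the distinguished representative of its $(S_{\mu^*}, S_{\nu^*})$-double coset, so once we are in the stable range, fully ordered representatives are identified across successive values of $n$ via the usual inclusions of symmetric groups.

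The main bookkeeping burden is keeping the section's reordering convention straight, so that it is unambiguous which matrix entry is the one that grows under $*$ and so that the extension $\sigma^*$ genuinely lands in the ``last'' blocks of both $\mu^*$ and $\nu^*$ (and that this matches the position $(b,a)$ of the added $1$). Once this is set up, the rest of the proof reduces to the single inequality $\mu_1 + \nu_1 \geq n$ and two immediate monotonicity checks, so I do not anticipate any serious obstacle beyond the careful bookkeeping.
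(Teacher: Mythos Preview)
Your proposal is correct and follows essentially the same line as the paper's proof: both arguments hinge on the observation that once the two ``big'' blocks are large enough, the entry $C_{b,a}$ (in your notation) is forced to be at least $1$, so every fully ordered representative at level $n+1$ fixes $n+1$ and hence arises from the standard inclusion $S_n \hookrightarrow S_{n+1}$. The paper phrases this as a pigeonhole argument (both first parts exceeding half of $n$), while you extract the sharper threshold $\mu_1 + \nu_1 \geq n$ directly from the row/column-sum constraints on the matrix; your version is more explicit about the bijection $\Phi$ and the bookkeeping, but the content is the same.
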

\begin{proof}
Observe that if the first parts of $\mu^*$ and $\nu^*$ each exceed $n/2$, then $C_{(\mu^*)_1, (\nu^*)_1} \geq 1$ by the pigeonhole principle. This means that for a fully ordered double coset representative $\sigma$, $\sigma(n+1) = n+1$. In particular, each double coset representative is obtained from a double coset representative of $(S_\mu, S_\nu)$ under the inclusion of $S_n$ in $S_{n+1}$.
\end{proof}

\begin{remark}
Ordering the multiplicative factors in the definition of Young subgroup from smallest to largest allows us to take the inclusions $S_n \hookrightarrow S_{n+1}$ obtained by extending functions on $\{1, 2, \ldots, n\}$ by requiring them to fix $n+1$. If we did not do this, we would have to use a nonstandard inclusion. The Young subgroups related by different orderings of their component groups are conjugate in $S_n$, so induced objects coming from the two embeddings are related by a twist which will be irrelevant for our purposes.
\end{remark}

\section{The Limiting Grothendieck Ring}
\noindent
We work towards understanding the tensor product, with the aim of constructing a ``stable'' Grothendieck ring.
\subsection{Tensor Products of Irreducible Modules}
Recall that $\mathcal{G}(\Wreath{n})$ naturally inherits the structure of a ring, and has a basis given by $[R_{\vv{\lambda}}]$ for $\vv{\lambda} \in \mathcal{P}_n^\mathcal{C}$.
\begin{example}
If $\mathcal{C}$ is the category of finite-dimensional vector spaces over $k$ (e.g. if $\mathcal{R} = k$), $\mathcal{G}(\Wreath{n})$ is the representation ring of $S_n$ over $k$.
\end{example}
\noindent
Given $\vv{\mu}, \vv{\nu} \in \mathcal{P}_n^{\mathcal{C}}$, we wish to describe $[R_{\vv{\mu}}][R_{\vv{\nu}}] = [R_{\vv{\mu}} \otimes R_{\vv{\nu}}]$ as a linear combination of $[R_{\vv{\lambda}}]$. For this task, we use the categorical Mackey theory results. We write $H_{\vv{\lambda}} = \prod_{U \in I(\mathcal{C})}S_{|\vv{\lambda}(U)|}$ for the subgroup of $S_n$ from which $R_{\vv{\lambda}}$ is induced. Note that $H_{\vv{\lambda}}$ is itself a Young subgroup of $S_n$.
\begin{lemma} \label{tensorR}
Using Proposition \ref{mackey_product}, we have the following:
\begin{align*}
R_{\vv{\mu}} \otimes R_{\vv{\nu}} &\cong \Ind{H_{\vv{\mu}}}{S_n}\left(\boxtimes_{U \in I(\mathcal{C})} \left(U^{\boxtimes |\vv{\mu}(U)|} \otimes \mathcal{S}^{\vv{\mu}(U)} \right)\right)
 \otimes \Ind{H_{\vv{\nu}}}{S_n}\left(\boxtimes_{U \in I(\mathcal{C})} \left(U^{\boxtimes |\vv{\nu}(U)|} \otimes \mathcal{S}^{\vv{\nu}(U)} \right) \right) \\
 &\cong \bigoplus_{t \in H_{\vv{\mu}} \backslash S_n / H_{\vv{\nu}}} \Ind{H_{\vv{\mu}} \cap tH_{\vv{\nu}}t^{-1}}{S_n}\left(
 \Res{H_{\vv{\mu}} \cap tH_{\vv{\nu}}t^{-1}}{H_{\vv{\mu}}}\left(\boxtimes_{U \in I(\mathcal{C})} \left( U^{\boxtimes |\vv{\mu}(U)|} \otimes \mathcal{S}^{\vv{\mu}(U)} \right) \right)\right. \\
 &  \hspace{36mm}
 \otimes \hspace{0mm}
 \Res{H_{\vv{\mu}} \cap tH_{\vv{\nu}}t^{-1}}{tH_{\vv{\nu}}t^{-1}}\left. \left(t\boxtimes_{U \in I(\mathcal{C})} \left(U^{\boxtimes |\vv{\nu}(U)|} \otimes \mathcal{S}^{\vv{\nu}(U)} \right)\right)  \right) 
\end{align*}
\end{lemma}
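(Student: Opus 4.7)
The plan is to apply Proposition \ref{mackey_product} directly to the two induced objects $R_{\vv{\mu}}$ and $R_{\vv{\nu}}$. By definition, for any multipartition $\vv{\lambda} \in \mathcal{P}_n^{\mathcal{C}}$, the simple object $R_{\vv{\lambda}}$ is already presented as $\Ind{H_{\vv{\lambda}}}{S_n}(M_{\vv{\lambda}})$, where $H_{\vv{\lambda}} = \prod_{U \in I(\mathcal{C})} S_{|\vv{\lambda}(U)|}$ is a Young subgroup of $S_n$ and $M_{\vv{\lambda}} = \boxtimes_{U \in I(\mathcal{C})} \left(U^{\boxtimes |\vv{\lambda}(U)|} \otimes \mathcal{S}^{\vv{\lambda}(U)}\right)$. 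Thus, setting $G = S_n$, $H = H_{\vv{\mu}}$, $K = H_{\vv{\nu}}$, $M = M_{\vv{\mu}}$ and $N = M_{\vv{\nu}}$ in Proposition \ref{mackey_product} immediately produces the right-hand side of the claimed isomorphism after rewriting the first line using the definition of $R_{\vv{\mu}}$ and $R_{\vv{\nu}}$.

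Before quoting Proposition \ref{mackey_product}, I would briefly verify its hypotheses in the present situation. The only nontrivial point is $H_{\vv{\mu}}$-equivariance of $M_{\vv{\mu}}$ (and analogously for $\vv{\nu}$): this holds because each box-tensor factor $U^{\boxtimes |\vv{\mu}(U)|} \otimes \mathcal{S}^{\vv{\mu}(U)}$ is equivariant under the corresponding component group $S_{|\vv{\mu}(U)|}$ of $H_{\vv{\mu}}$ by construction (the first factor via permutation of tensorands, the second via the Specht module structure), and the external box product of these factors is equivariant under the direct product $H_{\vv{\mu}} = \prod_U S_{|\vv{\mu}(U)|}$. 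Once this is in place, the first line of the displayed equation is just the definition of the tensor product $R_{\vv{\mu}} \otimes R_{\vv{\nu}}$, and the second line is the result of substituting these data into Proposition \ref{mackey_product} verbatim.

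There is essentially no main obstacle in this lemma itself; its role is to rewrite the tensor product in a form that exposes the $(S_{\vv{\mu}}, S_{\vv{\nu}})$-double coset sum studied in Section 6. The real work comes afterwards, namely analysing the summands $\Ind{H_{\vv{\mu}} \cap tH_{\vv{\nu}}t^{-1}}{S_n}(\cdots)$ in the stable limit via the double coset representative results of the previous section; the present lemma is simply the starting point for that analysis, so I would keep the proof to one short paragraph that recites the specialisation of Proposition \ref{mackey_product} to the data at hand.
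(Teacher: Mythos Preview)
Your proposal is correct and matches the paper's approach exactly: the paper gives no separate proof for this lemma at all, instead phrasing the statement itself as ``Using Proposition~\ref{mackey_product}, we have the following,'' i.e.\ treating it as an immediate specialisation. Your brief hypothesis check is a reasonable addition but not strictly needed.
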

\begin{remark} \label{contiguityRmk}
Observe that $H_{\vv{\mu}} \cap tH_{\vv{\nu}}t^{-1}$ is a proper subgroup of at least one of $H_{\vv{\mu}}$ and $tH_{\vv{\nu}}t^{-1}$ unless these two are equal. Later on, this observation will imply the vanishing of certain restrictions of virtual representations. We also note that by the fully ordered property of $t$, $H_{\vv{\mu}} \cap tH_{\vv{\nu}}t^{-1}$ is a Young subgroup of $S_n$; it independently permutes contiguous subsets of $\{1,2, \ldots, n\}$.
\end{remark}

\noindent
We now exploit the stability property of double cosets of Young subgroups to define a ``limiting Grothendieck group''. 

\begin{definition}
Let $\mathbf{1}$ be the unit object of $\mathcal{C}$. If $\vv{\lambda} \in \mathcal{P}_n^{\mathcal{C}}$, write $\vv{\lambda}^{*}$ for the element of $\mathcal{P}_{n+1}^{\mathcal{C}}$ obtained by adding $1$ to the largest part of $\vv{\lambda}(\mathbf{1})$. We denote the result of applying the operation $\vv{\lambda} \mapsto \vv{\lambda}^{*}$ successively $k$ times by $\vv{\lambda}^{*k}$.
\end{definition}
\noindent
Specifically, we will consider products $[R_{{\vv{\mu}}^{*k}}][R_{{\vv{\nu}}^{*k}}]$ for $\vv{\mu}, \vv{\nu} \in \mathcal{P}_n^{\mathcal{C}}$ (for some $n$) as $k \to \infty$. We first introduce notation to conveniently describe the limit.
\begin{definition}
If $\vv{\lambda} \in \mathcal{P}_n^\mathcal{C}$, we define the multipartition $\vv{\lambda}^{\prime} \in \mathcal{P}_{n - \vv{\lambda}(\mathbf{1})_1}^{\mathcal{C}}$ by removing the largest part of the partition $\vv{\lambda}(\mathbf{1})$ (if this was the empty partition, it remains the empty partition). Also, if $\vv{\lambda} \in \mathcal{P}_m^\mathcal{C}$, write $\vv{\lambda}[n]$ for the element of $\mathcal{P}_n^\mathcal{C}$ obtained by appending a part of size $n-m$ to the start of the partition $\vv{\lambda}(\mathbf{1})$; this is only defined if $n-m \geq \vv{\lambda}(\mathbf{1})_1$. Explicitly, for all $U$ different from $\mathbf{1}$, $\vv{\lambda}^{\prime}(U) = \vv{\lambda}[n](U) = \vv{\lambda}(U)$, and $\vv{\lambda}^{\prime}(\mathbf{1}) = \vv{\lambda}(\mathbf{1})\backslash(\vv{\lambda}(\mathbf{1})_1)$, whilst for $n-m \geq \vv{\lambda}(\mathbf{1})_1$, $\vv{\lambda}[n](\mathbf{1}) = (n-m, \vv{\lambda}(\mathbf{1}))$. We leave the operation undefined if the inequality does not hold. Finally, we define the following set which will index a basis of the limiting Grothendieck ring.
\[
\mathcal{P}^\mathcal{C} = \bigcup_{n \geq 0} \mathcal{P}_n^\mathcal{C} = \{\lambda : I(\mathcal{C}) \to \mathcal{P} \mid \sum_{U \in I(C)} |\lambda(U)| < \infty \}
\]
\end{definition}
\noindent
The above operations satisfy some trivial properties.
\begin{lemma}
The operations $\vv{\lambda} \mapsto \vv{\lambda}^{\prime}$ and $\vv{\lambda} \mapsto \vv{\lambda}[n]$ satisfy the following relations.
\begin{enumerate}
\item $\vv{\lambda}^{\prime} = \vv{\lambda}^{*\prime}$
\item If $\vv{\lambda} \in \mathcal{P}_n^\mathcal{C}$, then $\vv{\lambda^{\prime}}[n] = \vv{\lambda}$.
\item If $\vv{\lambda} \in \mathcal{P}_m^\mathcal{C}$ and $(n-m) \geq \vv{\lambda}(\mathbf{1})_1$, then $\vv{\lambda}[n]^{\prime} = \vv{\lambda}$. In particular, this holds for $n$ sufficiently large.
\item $\bigcup_{n \in \mathbb{Z}_{\geq 0}} \{ \vv{\lambda}^{\prime} \mid \vv{\lambda} \in \mathcal{P}_n^{\mathcal{C}}\} = \mathcal{P}^{\mathcal{C}}$
\end{enumerate}
\end{lemma}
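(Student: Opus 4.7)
The plan is to verify each of the four statements directly from the definitions, since the lemma really is bookkeeping about two very simple partition operations on the $\mathbf{1}$-component of a multipartition. The only recurring subtlety is tracking when $\vv{\lambda}[n]$ is defined (i.e.\ when $n-m\geq\vv{\lambda}(\mathbf{1})_1$) and confirming that the prepended part really becomes the largest part. I would address the four items in order, as each subsequent one uses the preceding manipulation.

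For part (1), I would note that $\vv{\lambda}^{*}$ only increments $\vv{\lambda}(\mathbf{1})_1$ by $1$, while $(\vv{\lambda}^{*})'$ then removes this new top part $\vv{\lambda}(\mathbf{1})_1+1$; on the other hand $\vv{\lambda}'$ removes the top part $\vv{\lambda}(\mathbf{1})_1$. In both cases the resulting $\mathbf{1}$-component is $\vv{\lambda}(\mathbf{1})\setminus(\vv{\lambda}(\mathbf{1})_1)$ and every other $U$-component is untouched, giving equality. For part (2), if $\vv{\lambda}\in\mathcal{P}_n^{\mathcal{C}}$ then $\vv{\lambda}'\in\mathcal{P}^{\mathcal{C}}_{n-\vv{\lambda}(\mathbf{1})_1}$, and prepending a part of size $n-(n-\vv{\lambda}(\mathbf{1})_1)=\vv{\lambda}(\mathbf{1})_1$ is legal because the remaining parts of $\vv{\lambda}(\mathbf{1})$ are all $\leq\vv{\lambda}(\mathbf{1})_1$; the outcome is exactly $\vv{\lambda}$.

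Part (3) is a symmetric check: under the hypothesis $n-m\geq\vv{\lambda}(\mathbf{1})_1$, the prepended part $n-m$ is the new maximum in $\vv{\lambda}[n](\mathbf{1})$, so the subsequent $'$ deletes precisely $n-m$ and returns $\vv{\lambda}$. Finally, for part (4), the inclusion $\subseteq$ is immediate from the definition $\mathcal{P}^{\mathcal{C}}=\bigcup_{k\geq 0}\mathcal{P}^{\mathcal{C}}_k$; for the reverse inclusion, given $\vv{\mu}\in\mathcal{P}^{\mathcal{C}}_m$, pick any $n\geq m+\vv{\mu}(\mathbf{1})_1$ so that $\vv{\mu}[n]$ is defined, and apply part (3) to write $\vv{\mu}=\vv{\mu}[n]'\in\{\vv{\lambda}'\mid\vv{\lambda}\in\mathcal{P}_n^{\mathcal{C}}\}$.

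The main (and only) obstacle is the definedness condition on $[n]$; once one is careful that the prepended part is genuinely $\geq$ all remaining parts of the $\mathbf{1}$-component, each of (1)--(4) reduces to reading off the definitions. There is no deeper structure to deploy here, which is consistent with the author's label of these as ``trivial properties.''
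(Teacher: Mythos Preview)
Your proof is correct and matches the paper's approach: the paper omits the proof entirely, labelling these as ``trivial properties,'' and your direct verification from the definitions is precisely the intended argument. There is nothing to add.
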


\begin{definition}
Given $\vv{\mu}, \vv{\nu} \in \mathcal{P}^{\mathcal{C}}$, we may write (in $\mathcal{G}(\Wreath{n})$ for $n$ sufficiently large)
\[
[R_{\vv{\mu}[n]}][R_{\vv{\nu}[n]}]
= \sum_{\vv{\lambda} \in \mathcal{P}^{\mathcal{C}}} c_{\vv{\mu}, \vv{\nu}}^{\vv{\lambda}}(n) [R_{\vv{\lambda}[n]}]
\]
In the above sum, we only consider terms for which $\vv{\lambda}[n]$ is well defined. We define the numbers $c_{\vv{\mu}, \vv{\nu}}^{\vv{\lambda}}(n) \in \mathbb{Z}_{\geq 0}$ via the preceding equation, and note that for fixed $\vv{\mu}, \vv{\nu}, \vv{\lambda}$ it is defined for all sufficiently large $n$.
\end{definition}

\noindent
We use Mackey theory to show that the $c_{\vv{\mu}, \vv{\nu}}^{\vv{\lambda}}(n)$ become constant as $n \to \infty$. Specifically, Lemma \ref{tensorR} describes the decomposition and Proposition \ref{cosetStab} implies that the index set of the sum stabilises. So, it suffices to show that for any fixed $t$ in the set of double coset representatives, the corresponding summand also stabilises:
\begin{align*} 
\Ind{H_{\vv{\mu}[n]} \cap tH_{\vv{\nu}[n]}t^{-1}}{S_n}\left( 
 \Res{H_{\vv{\mu}[n]} \cap tH_{\vv{\nu}[n]}t^{-1}}{H_{\vv{\mu}[n]}}\left(\boxtimes_{U \in I(\mathcal{C})} \left(U^{\boxtimes |\vv{\mu}[n](U)|} \otimes \mathcal{S}^{\vv{\mu}[n](U)}\right)\right) \right. \\
 \otimes \left. \Res{H_{\vv{\mu}[n]} \cap tH_{\vv{\nu}[n]}t^{-1}}{tH_{\vv{\nu}[n]}t^{-1}}\left(t \boxtimes_{U \in I(\mathcal{C})} \left(U^{\boxtimes |\vv{\nu}[n](U)|} \otimes \mathcal{S}^{\vv{\nu}[n](U)} \right)\right) \right)
\end{align*}
To demonstrate stability, we first show that the restrictions in the above expression stabilise in a particular sense.

\begin{lemma} \label{PL}
Let $n$ be sufficiently large, depending on $\vv{\mu}, \vv{\nu}$, and $t$, where $t$ is a fully ordered $(H_{\vv{\mu}[n]}, H_{\vv{\nu}[n]})$-double coset representative. There exists $g \in S_n$ (identified for different $n$ via usual inclusions of symmetric groups) such that $g(H_{\vv{\mu}[n]} \cap tH_{\vv{\nu}[n]}t^{-1})g^{-1} = S_{\sigma[n]}$, where $\sigma = (\sigma_1, \sigma_2, \ldots, \sigma_l)$ is some partition.
Additionally, the restriction
\[
 \Res{H_{\vv{\mu}[n]} \cap tH_{\vv{\nu}[n]}t^{-1}}{H_{\vv{\mu}[n]}}\left(\boxtimes_{U \in I(\mathcal{C})} \left(U^{\boxtimes |\vv{\mu}[n](U)|} \otimes \mathcal{S}^{\vv{\mu}[n](U)} \right) \right)
\]
is equal to a finite direct sum of expressions of the following form, where the multiplicity of each term does not vary with $n$, provided $n$ is sufficiently large:
\[
\left( \boxtimes_{i = 1}^{l} \left(U_i^{\boxtimes |\tau^{(i)}|} \otimes \mathcal{S}^{\tau^{(i)}} \right) \right) \boxtimes \left(\mathbf{1}^{\boxtimes (n - |\sigma|)} \otimes \mathcal{S}^{(n - |\tau^{(0)}| - |\sigma|, \tau^{(0)})}\right)
\]
Here the $U_i$ are not necessarily distinct and each $\tau^{(i)}$ is a partition of $\sigma_i$ ($\tau^{(0)}$ is arbitrary, but only finitely many cases appear).
\end{lemma}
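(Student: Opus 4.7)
My plan is to combine the Young subgroup structure of $H_{\vv{\mu}[n]} \cap t H_{\vv{\nu}[n]} t^{-1}$ noted in Remark \ref{contiguityRmk} with the double coset stability of Proposition \ref{cosetStab} and the Littlewood--Richardson stability of Proposition \ref{LRS}. Because $t$ is fully ordered, the intersection is itself a Young subgroup whose component block sizes are the intersection numbers $C_{i,j}(t)$ computed with respect to the subsets associated to the factors of $H_{\vv{\mu}[n]}$ and $H_{\vv{\nu}[n]}$. All factor sizes are independent of $n$ except those associated with the unit object $\mathbf{1}$, and the pigeonhole argument from the proof of Proposition \ref{cosetStab} forces all of the growth to be absorbed by $C_{\mathbf{1}, \mathbf{1}}(t)$: for $n$ sufficiently large we get one block of size $n - |\sigma|$ together with finitely many small blocks of sizes $\sigma_1, \ldots, \sigma_l$ independent of $n$. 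Conjugating by a $g \in S_n$ that places the large block in the rightmost coordinate positions yields $S_{\sigma[n]}$ in the ordering convention of Section 6, and $g$ can be chosen to be independent of $n$ (in the sense of the standard inclusions $S_n \hookrightarrow S_{n+1}$) since the small blocks sit in a fixed initial segment.

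I would then compute the restriction factor by factor using Proposition \ref{Wprop2}(2). For each $U \in I(\mathcal{C})$ with $U \neq \mathbf{1}$, the corresponding tensor factor $U^{\boxtimes |\vv{\mu}(U)|} \otimes \mathcal{S}^{\vv{\mu}(U)}$ is restricted along an inclusion of Young subgroups that is fixed (independent of $n$); the Specht module $\mathcal{S}^{\vv{\mu}(U)}$ decomposes as a finite outer tensor sum $\boxtimes_j \mathcal{S}^{\tau^{(j)}}$ with multiplicities equal to iterated Littlewood--Richardson coefficients, and the Hopf factor splits accordingly as $\boxtimes_j U^{\boxtimes |\tau^{(j)}|}$. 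These contributions account for the small factors $U_i^{\boxtimes |\tau^{(i)}|} \otimes \mathcal{S}^{\tau^{(i)}}$ (with possibly repeated $U_i$) whose multiplicities are manifestly independent of $n$.

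The essential case is $U = \mathbf{1}$, where I restrict $\mathbf{1}^{\boxtimes (n - |\vv{\mu}| + \vv{\mu}(\mathbf{1})_1)} \otimes \mathcal{S}^{(n - |\vv{\mu}|, \vv{\mu}(\mathbf{1}))}$ along an inclusion which splits this factor into one large block of size $n - |\sigma'|$ (where $\sigma'$ collects the part of $\sigma$ contributed by the $\mathbf{1}$-factor) together with finitely many small blocks of fixed total size. By Proposition \ref{Wprop2}(2) and the Littlewood--Richardson rule, the restriction is a finite sum of terms of the form $\mathcal{S}^{(n - |\tau^{(0)}| - |\sigma'|, \tau^{(0)})}$ on the large block, outer-tensored with Specht modules on the small blocks, and Proposition \ref{LRS} guarantees that the multiplicity of each such term is eventually constant in $n$, with only finitely many $\tau^{(0)}$ yielding a nonzero contribution. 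Merging the small-block pieces arising from the $U = \mathbf{1}$ step with the small-block pieces from the $U \neq \mathbf{1}$ steps gives the overall list $\tau^{(1)}, \ldots, \tau^{(l)}$ appearing in the lemma.

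The main technical obstacle, and essentially the only one, is verifying that for $n$ large the first (big) row of the partition $(n - |\vv{\mu}|, \vv{\mu}(\mathbf{1}))$ is forced to land inside the big block of the refined Young subgroup, so that the Specht shape on that block has the required form $(n - |\tau^{(0)}| - |\sigma'|, \tau^{(0)})$ with $\tau^{(0)}$ bounded. This is precisely the content of Proposition \ref{LRS}: the coefficients $c_{\mu^{*m}, \nu}^{\lambda^{*m}}$ stabilise in $m$, which in our setting says that once $n - |\vv{\mu}|$ is large enough for the first row of $(n - |\vv{\mu}|, \vv{\mu}(\mathbf{1}))$ to become disconnected from the remaining cells in every relevant skew shape, both the set of admissible $\tau^{(0)}$ and all corresponding multiplicities cease to depend on $n$. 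Assembling the factor-by-factor contributions then yields the claimed decomposition.
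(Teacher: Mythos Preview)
Your argument is correct and follows essentially the same route as the paper's proof: you use Remark~\ref{contiguityRmk} to identify the intersection as a Young subgroup, invoke the stability of $t$ from Proposition~\ref{cosetStab} to isolate a single growing block of size $n-|\sigma|$ (with the remaining block sizes fixed), conjugate to put the blocks in the standard ordering, and then analyse the restriction factor by factor via Proposition~\ref{Wprop2}(2), appealing to Proposition~\ref{LRS} for the $U=\mathbf{1}$ factor. The paper does exactly this, with the minor cosmetic difference that it factors the $U=\mathbf{1}$ restriction through an intermediate $S_k \times S_{n-|\sigma|}$ step before restricting $S_k$ further. One small slip: the exponent on $\mathbf{1}$ in your $U=\mathbf{1}$ paragraph should be $n-|\vv{\mu}|+|\vv{\mu}(\mathbf{1})|$, not $n-|\vv{\mu}|+\vv{\mu}(\mathbf{1})_1$.
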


\begin{proof}
The subgroup $H_{\vv{\mu}[n]} \cap tH_{\vv{\nu}[n]}t^{-1}$ is a Young subgroup of $S_n$ by Remark \ref{contiguityRmk}, which we may conjugate to reorder the component groups in order of increasing size. Explicitly, we have $g \in S_n$ such that $g(H_{\vv{\mu}[n]} \cap tH_{\vv{\nu}[n]}t^{-1})g^{-1} = S_{\alpha}$ for some partition $\alpha = (\sigma_0, \sigma_1, \ldots, \sigma_l)$. We now show that we may take $\sigma_1, \sigma_2, \ldots, \sigma_l$ to be constant with respect to $n$, and hence $\sigma = (\sigma_1, \sigma_2, \ldots, \sigma_l)$ is the desired partition from the statement of the lemma (and $\sigma_0 = n - |\sigma|$).
\newline \newline \noindent
By Proposition \ref{cosetStab}, we may assume the double coset representative $t$ is preserved under the inclusions $S_{n} \hookrightarrow S_{n+1} \hookrightarrow \cdots$. Hence, $t$ fixes $i$ for $i$ larger than some fixed constant $m_0$ depending on $t$. If $m_1 = |\vv{\mu}| - |\vv{\mu}(\mathbf{1})_1|$, then by construction $H_{\vv{\mu}}$ contains the symmetric group on $\{m_1 +1, m_1 + 2, \ldots, n \}$ (this is a subgroup of the component group whose representation $U^{\boxtimes |\tau|} \otimes \mathcal{S}^{\tau}$ has $U = \mathbf{1}$). Similarly if $m_2 = |\vv{\nu}| - |\vv{\nu}(\mathbf{1})_1|$, then $H_{\vv{\nu}}$ contains the symmetric group on $\{ m_2 + 1, m_2 + 2, \ldots, n\}$. Now, we let $M = \max(m_0, m_1, m_2)$, and we observe that the symmetric group on $\{M+1, M+2, \ldots, n\}$ is contained in $H_{\vv{\mu}} \cap tH_{\vv{\nu}}t^{-1}$ (it is contained in each of $H_{\vv{\mu}}$ and $H_{\vv{\nu}}$ and commutes with $t$). Thus we may choose $g$ in the previous paragraph to fix $M+1, M+2, \ldots, n$ by making the component group permuting the orbit of $n$ appear as the last factor in the construction of $S_{(\sigma_0, \sigma_1, \ldots, \sigma_l)}$ (for $n$ sufficiently large, this is consistent with our convention of ordering of component groups in a Young subgroup of a symmetric group). By similar reasoning, the other component groups of $H_{\vv{\mu}}\cap tH_{\vv{\nu}}t^{-1}$ are stable when passing from $n$ to $n+1$, meaning that $H_{\vv{\mu}}\cap tH_{\vv{\nu}}t^{-1}$ decomposes as a product of a fixed number of symmetric groups $S_{\sigma_i}$ (where $\sigma_i$ are constant with respect to $n$) and $S_{n-\sum_i \sigma_i}$. Since $g$ fixes all $i$ greater than $M$, it is compatible with the inclusions $S_{n} \hookrightarrow S_{n+1}$.
\newline \newline \noindent
Next we discuss stability of the restriction. The restriction of an external product is the same as the external product of restrictions.
\begin{eqnarray*} 
& &\Res{H_{\vv{\mu}[n]} \cap tH_{\vv{\nu}[n]}t^{-1}}{H_{\vv{\mu}[n]}}\left(\boxtimes_{U \in I(\mathcal{C})} \left(U^{\boxtimes |\vv{\mu}[n](U)|} \otimes \mathcal{S}^{\vv{\mu}[n](U)}\right)\right)\\
&\cong&
\boxtimes_{U \in I(\mathcal{C})} 
\Res{S_{|\vv{\mu}[n](U)|} \cap tH_{\vv{\nu}[n]}t^{-1}}{S_{|\vv{\mu}[n](U)|}}\left(U^{\boxtimes |\vv{\mu}[n](U)|} \otimes \mathcal{S}^{\vv{\mu}[n](U)} \right)\\
\end{eqnarray*}
Because the intersection $H_{\vv{\mu}[n]} \cap tH_{\vv{\nu}[n]}t^{-1}$ is a Young subgroup of $S_n$, it follows that for each $U$, $S_{|\vv{\mu}[n](U)|} \cap tH_{\vv{\nu}[n]}t^{-1}$, is a Young subgroup of $S_{|\vv{\mu}[n](U)|}$ and the product of these across $U \in I(\mathcal{C})$ will give $H_{\vv{\mu}[n]}\cap tH_{\vv{\nu}[n]}t^{-1}$. Because $H_{\vv{\mu}[n]}\cap tH_{\vv{\nu}[n]}t^{-1}$ is conjugate to $S_{(n-|\sigma|, \sigma)}$ by reordering of component groups, we may write $S_{|\vv{\mu}[n](U)|}\cap tH_{\vv{\nu}[n]}t^{-1} = \prod_{j \in I_U}S_{\sigma_j}$, where the $I_U$ are disjoint subsets of $\{0,1, \ldots, l\}$ indexed by $U \in I(\mathcal{C})$ that partition $\{0, 1, \ldots, l\}$. In particular, when $U \neq \mathbf{1}$, $\prod_{j \in I_U} S_{\sigma_j}$ is independent of $n$ (the $U = \mathbf{1}$ term contains $S_{\sigma_0} = S_{n-|\sigma|}$ which does depend on $n$).
\newline \newline \noindent
For a function $f: I_U \to \mathcal{P}$, let $c_f \in \mathbb{Z}_{\geq 0}$ be defined by restricting representations of symmetric groups:
\[
\Res{\prod_{i \in I_U} S_{\sigma_i}}{S_{|\vv{\mu}[n](U)|}}\left(\mathcal{S}^{\vv{\mu}[n](U)}\right) = \bigoplus_{f: I_U \to \mathcal{P}, |f(i)| = \sigma_i} \left(\boxtimes_{i \in I_U} \mathcal{S}^{f(i)} \right)^{\oplus c_f}
\]
After suitably applying Proposition \ref{Wprop2} to decompose such a restriction, we get the following.
\[
\Res{S_{|\vv{\mu}[n](U)|} \cap tH_{\vv{\nu}[n]}t^{-1}}{S_{|\vv{\mu}[n](U)|}}\left(U^{\boxtimes |\vv{\mu}[n](U)|} \otimes \mathcal{S}^{\vv{\mu}[n](U)}\right)
=
\bigoplus_{f: I_U \to \mathcal{P}, |f(i)| = \mu_i} \left(\boxtimes_{i \in I_U} \left(U^{\boxtimes |f(i)|} \otimes \mathcal{S}^{f(i)} \right)\right)^{\oplus c_f}
\]
This makes it clear that the only dependence on $n$ enters through the term corresponding to $\sigma_0 = n - |\sigma|$ in the $U = \mathbf{1}$ case (the cases for other $U$ do not involve $S_{n-|\sigma|}$ and hence do not depend on $n$). Let $k = \sum_{i \in I_{\mathbf{1}}\backslash \{0\}} \sigma_i$. In that case, we observe that restriction from $S_{|\vv{\mu}[n](\mathbf{1})|}$ to $(\prod_{i \in I_{\mathbf{1}} \backslash \{0\}} S_{\sigma_i}) \times  S_{n - |\sigma|}$ is the same as first restricting to $S_{k} \times S_{n - |\sigma|}$ and then restricting the first factor to $\prod_{i \in I_{\mathbf{1}} \backslash \{0\}} S_{\sigma_i}$ where the latter operation will be independent of $n$, similarly to the case $U \neq \mathbf{1}$. Thus it is enough to show that the operation of restricting to $S_{k} \times S_{n - |\sigma|}$ is stable in the sense described by the statement of the lemma.
\newline \newline \noindent
To understand the restriction, we fix an integer partition $\rho$. We must demonstrate the stability of the following expression (understood as a sum of terms of the form $\mathcal{S}^{\eta} \boxtimes \mathcal{S}^{(n - k - |\tau|, \tau)}$ for $\eta \vdash k$).
\[
\Res{S_{k} \times S_{n - |\mu|}}{S_{n-|\mu|+k}}(\mathcal{S}^{(n -k - |\rho|, \rho)})
\]
The restriction multiplicities are given by Littlewood-Richardson coefficients, and the stability condition is immediately implied by Proposition \ref{LRS}.
\end{proof}
\begin{remark}
The analogous stability statement for
\[
 \Res{H_{\vv{\mu}[n]} \cap tH_{\vv{\nu}[n]}t^{-1}}{tH_{\vv{\nu}[n]}t^{-1}}\left(t \boxtimes_{U \in I(\mathcal{C})} \left(U^{\boxtimes |\vv{\nu}[n](U)|} \otimes \mathcal{S}^{\vv{\nu}[n](U)} \right) \right)
\]
is proved similarly.
\end{remark}
\noindent
Finally, we are able to prove stability of the coefficients $c_{\vv{\mu}, \vv{\nu}}^{\vv{\lambda}}(n)$.
\begin{theorem} \label{StrCstStab}
For any choice of $\vv{\mu}, \vv{\nu}, \vv{\lambda}$, $\lim_{n \to \infty}c_{\vv{\mu}, \vv{\nu}}^{\vv{\lambda}}(n)$ exists and is a nonnegative integer.
\end{theorem}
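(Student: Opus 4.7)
The plan is to combine the Mackey-theoretic decomposition from Lemma~\ref{tensorR} with the stability results already established. By Proposition~\ref{cosetStab}, for $n$ sufficiently large the set of $(H_{\vv{\mu}[n]}, H_{\vv{\nu}[n]})$-double coset representatives stabilises, and representatives $t$ can be chosen compatibly with the inclusions $S_n \hookrightarrow S_{n+1}$. Hence the decomposition in Lemma~\ref{tensorR} becomes a sum over a fixed finite index set, and it suffices to show that the contribution of each fixed $t$ to $c_{\vv{\mu},\vv{\nu}}^{\vv{\lambda}}(n)$ is eventually constant in~$n$.

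Fix such a $t$. By Lemma~\ref{PL} (and the remark concerning the $\vv{\nu}$-restriction), each of the two restrictions appearing in the Mackey summand decomposes, with $n$-independent multiplicities, into terms of the form $\left(\boxtimes_i \left(U_i^{\boxtimes |\tau^{(i)}|} \otimes \mathcal{S}^{\tau^{(i)}}\right)\right) \boxtimes \left(\mathbf{1}^{\boxtimes (n-|\sigma|)} \otimes \mathcal{S}^{\tau^{(0)}[n-|\sigma|]}\right)$. Taking the tensor product of the two restrictions, Proposition~\ref{Wprop1}(2) shows that the ``fixed-size'' factors on $S_{\sigma_i}$ for $i\geq 1$ combine into an $n$-independent object, whilst the two $\mathbf{1}$-factors combine into $\mathbf{1}^{\boxtimes (n-|\sigma|)} \otimes \left(\mathcal{S}^{\tau^{(0)}[n-|\sigma|]} \otimes \mathcal{S}^{\rho^{(0)}[n-|\sigma|]}\right)$. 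By Lemma~\ref{kron_stab}, the Specht tensor product decomposes, for $n$ large enough, as $\bigoplus_{\alpha_0} \tilde{k}_{\tau^{(0)},\rho^{(0)}}^{\alpha_0} \mathcal{S}^{\alpha_0[n-|\sigma|]}$, with $\alpha_0$ ranging over a fixed finite set and $n$-independent multiplicities.

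It remains to induce these stable summands from $H_{\vv{\mu}[n]} \cap tH_{\vv{\nu}[n]}t^{-1}$ up to $S_n$ and identify the decomposition into the simple objects $R_{\vv{\lambda}[n]}$. Conjugating by the element $g$ provided by Lemma~\ref{PL} reduces this to inducing from the standard Young subgroup $S_{(n-|\sigma|,\sigma_1,\ldots,\sigma_l)}$, which we perform in stages. For each $U \in I(\mathcal{C})$ with $U \neq \mathbf{1}$, the factors with $U_i = U$ combine, via Proposition~\ref{Wprop1}(2) and transitivity of induction, into a fixed induction-product of Specht modules governed by ordinary (hence $n$-independent) Littlewood-Richardson coefficients. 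For the $U = \mathbf{1}$ block, which also contains the $n$-dependent factor $\mathbf{1}^{\boxtimes(n-|\sigma|)} \otimes \mathcal{S}^{\alpha_0[n-|\sigma|]}$, the corresponding Littlewood-Richardson coefficients are precisely of the form $c_{\mu^{*m},\nu}^{\lambda^{*m}}$ considered in Proposition~\ref{LRS}, and so stabilise. After these stages, each induced summand is naturally identified with some $R_{\vv{\lambda}[n]}$, and the multiplicity of any fixed $R_{\vv{\lambda}[n]}$ in $R_{\vv{\mu}[n]} \otimes R_{\vv{\nu}[n]}$ becomes a finite sum of products of quantities each of which is eventually constant in $n$. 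Non-negativity and integrality are automatic, since $c_{\vv{\mu},\vv{\nu}}^{\vv{\lambda}}(n)$ is a composition multiplicity.

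The main obstacle I anticipate is the bookkeeping in the last paragraph: one must verify that as $n$ grows, the ``extra boxes'' added at each stage are absorbed entirely into the first row of the $U = \mathbf{1}$ Specht component (so that the induced irreducibles match $R_{\vv{\lambda}[n]}$ for the \emph{same} $\vv{\lambda}$ across $n$, rather than producing an $n$-dependent reindexing), and that the regime in which all three stability results (Propositions~\ref{cosetStab} and~\ref{LRS} and Lemma~\ref{kron_stab}) simultaneously apply can be reached by a single sufficiently large~$n$. This will follow by choosing $n$ to exceed the maximum of the various thresholds coming from each stabilisation result, together with the observation from Lemma~\ref{PL} that the first row of the $U=\mathbf{1}$ Specht factor is the unique component that absorbs the growth in $n$.
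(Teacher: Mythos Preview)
Your argument follows the same strategy as the paper and is essentially correct; the three stability ingredients you isolate (Proposition~\ref{cosetStab}, Lemma~\ref{kron_stab}, Proposition~\ref{LRS}) are exactly those the paper uses, in the same order and for the same purpose.

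There is one step you gloss over that the paper makes explicit. After applying Proposition~\ref{Wprop1}(2) to the fixed-size factors, the object sitting over $S_{\sigma_i}$ is $(U_i\otimes V_i)^{\boxtimes \sigma_i}\otimes(\mathcal{S}^{\tau^{(i)}}\otimes\mathcal{S}^{\rho^{(i)}})$, and $U_i\otimes V_i$ is \emph{not} in general simple. Your sentence ``the factors with $U_i=U$ combine'' is therefore not quite right: at this point the labels are $U_i\otimes V_i$, not $U_i$, and before you can match the induced object against the $R_{\vv{\lambda}[n]}$ (which are built from simple $U$'s) you must break each $(U_i\otimes V_i)^{\boxtimes \sigma_i}$ into pieces indexed by simple objects. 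The paper invokes Proposition~\ref{Wprop3} for this: it expresses $[M^{\boxtimes n}\otimes\mathbf{1}_{S_n}]$ in terms of inductions of $[N^{\boxtimes r}]\boxtimes[(M/N)^{\boxtimes(n-r)}]$, and iterating over a composition series of $U_i\otimes V_i$ gives a finite, $n$-independent rewriting in terms of simple $U$'s. Only then can one group by simple $U$ and induce in stages as you describe. Since this decomposition is entirely $n$-independent, it does not affect the stability claim, but without it the identification with $R_{\vv{\lambda}[n]}$ is incomplete. Insert a sentence invoking Proposition~\ref{Wprop3} at the point where you pass from the tensored restrictions to the induction step, and the argument is complete.
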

\begin{proof}
We use Lemma \ref{PL}, again reducing to the case of a fixed double coset representative $t$ (of which there are finitely many, and they are stable with respect to $n$). We must demonstrate stability of
\begin{align*} 
\Ind{H_{\vv{\mu}[n]} \cap tH_{\vv{\nu}[n]}t^{-1}}{S_n}\left( 
 \Res{H_{\vv{\mu}[n]} \cap tH_{\vv{\nu}[n]}t^{-1}}{H_{\vv{\mu}[n]}}\left(\boxtimes_{U \in I(\mathcal{C})} \left(U^{\boxtimes |\vv{\mu}[n](U)|} \otimes \mathcal{S}^{\vv{\mu}[n](U)}\right)\right) \right. \\
 \otimes \left. \Res{H_{\vv{\mu}[n]} \cap tH_{\vv{\nu}[n]}t^{-1}}{tH_{\vv{\nu}[n]}t^{-1}}\left(t \boxtimes_{U \in I(\mathcal{C})} \left(U^{\boxtimes |\vv{\nu}[n](U)|} \otimes \mathcal{S}^{\vv{\nu}[n](U)} \right)\right) \right)
\end{align*}
The restrictions give a finite number of stable summands (by Lemma \ref{PL}), so it suffices to show that products of summands exhibit the relevant stabilisation property. We write:
\begin{eqnarray*}
& &\Ind{H_{\vv{\mu}[n]} \cap tH_{\vv{\nu}[n]}t^{-1}}{S_n}\left(
 \Res{H_{\vv{\mu}[n]} \cap tH_{\vv{\nu}[n]}t^{-1}}{H_{\vv{\mu}[n]}}\left(\boxtimes_{U \in I(\mathcal{C})} \left( U^{\boxtimes |\vv{\mu}[n](U)|} \otimes \mathcal{S}^{\vv{\mu}[n](U)}\right)\right) \right. \\
 & & \hspace{34.2mm}
 \otimes
\left. \Res{H_{\vv{\mu}[n]} \cap tH_{\vv{\nu}[n]}t^{-1}}{tH_{\vv{\nu}[n]}t^{-1}}\left(t \boxtimes_{U \in I(\mathcal{C})} \left(U^{\boxtimes |\vv{\nu}[n](U)|} \otimes \mathcal{S}^{\vv{\nu}[n](U)}\right)\right)\right) \\
&=&
\bigoplus
\Ind{S_\sigma \times S_{n - |\sigma|}}{S_n}\left( 
\left( \boxtimes_{i = 1}^{l} \left( U_i^{\boxtimes |\tau^{(i)}|} \otimes \mathcal{S}^{\tau^{(i)}}\right) \right) \boxtimes \left(\mathbf{1}^{\boxtimes (n - |\sigma|)} \otimes \mathcal{S}^{(n - |\tau^{(0)}| - |\sigma|, \tau^{(0)})}\right)
\right.
\\
& & \hspace{24.2mm} \left.
\otimes 
\left( \boxtimes_{i = 1}^{l} \left(V_i^{\boxtimes |\rho^{(i)}|} \otimes \mathcal{S}^{\rho^{(i)}}\right) \right) \boxtimes \left(\mathbf{1}^{\boxtimes (n - |\sigma|)} \otimes \mathcal{S}^{(n - |\rho^{(0)}| - |\sigma|, \rho^{(0)})}\right)
\right) \\
&=&
\bigoplus
\Ind{S_\sigma \times S_{n - |\sigma|}}{S_n}\left( 
\left( \boxtimes_{i = 1}^{l} \left((U_i\otimes V_i)^{|\tau^{(i)}|} \otimes \mathcal{S}^{\tau^{(i)}} \otimes \mathcal{S}^{\rho^{(i)}} \right) \right)
\right.
\\
& & \hspace{24.2mm} \left.
\boxtimes \left( \mathbf{1}^{\boxtimes(n - |\sigma|)} \otimes \mathcal{S}^{(n - |\tau^{(0)}| - |\sigma|, \tau^{(0)})} \otimes \mathcal{S}^{(n - |\rho^{(0)}| - |\sigma|, \rho^{(0)})}\right)
\right)
\end{eqnarray*}
Here the first equality used the statement of the preceding lemma (hence the implied sum is finite and independent of $n$); $\tau^{(i)}$ and $\rho^{(i)}$ are the partitions coming from the statement of the lemma. The second equality used Proposition \ref{Wprop1}. Each $U_i \otimes V_i$ decomposes into a linear combination of simple $[U]$ when we pass to the Grothendieck group. Proposition \ref{Wprop3} can be used to replace the summand with a sum of similar terms where the $U_i \otimes V_i$ are replaced with $[U]$ for some $U \in I(\mathcal{C})$ and the resulting quantity is independent of $n$. The result that the term $\mathbf{1}^{\boxtimes (n - |\sigma|)} \otimes \left(\mathcal{S}^{(n - |\tau^{(0)}| - |\sigma|, \tau^{(0)})} \otimes \mathcal{S}^{(n - |\rho^{(0)}| - |\sigma|, \rho^{(0)})}\right)$ admits a stable limit in terms of $\mathbf{1}^{\boxtimes (n -|\sigma|)} \otimes \mathcal{S}^{(n - |\lambda| - |\sigma|, \lambda)}$ is equivalent to the stability of Kronecker coefficients. Then, taking the exterior tensor product with a finite number of fixed $\mathbf{1}^{\boxtimes |\tau^{(i)}|} \otimes \mathcal{S}^{\tau^{(i)}}$ (coming from the finite terms in the product) and inducing to a larger symmetric group:
\[
\Ind{S_{n-|\sigma|} \times \prod_i S_{|\tau^{(i)}|}}{S_n} \left(
\left( \mathbf{1}^{\boxtimes (n - |\sigma|)} \otimes \mathcal{S}^{(n - |\lambda| - |\sigma|, \lambda)} \right) \boxtimes \left(\boxtimes_{i=1}^{l} \mathbf{1}^{\boxtimes |\tau^{(i)}|} \otimes \mathcal{S}^{\tau^{(i)}}\right) \right)
\]
also has a stable limit because the multiplicities are described by Littlewood-Richardson coefficients which we already know have suitable stability properties as per Proposition \ref{LRS}. We obtain a linear combination of $[R_{\vv{\lambda}[n]}]$ in the Grothendieck group. The coefficients are finite because they are limits of an eventually constant sequence of integers.
\end{proof}
\subsection{Definition and Basic Properties of the Limiting Grothendieck Ring}
We come to the definition of the main object of this paper.
\begin{definition}
Let $\mathcal{G}_{\infty}(\mathcal{C})$ be the $\mathbb{Q}$-vector space having basis $X_{\vv{\lambda}}$ for $\vv{\lambda} \in \mathcal{P}^\mathcal{C}$ and a multiplication defined by:
\[
X_{\vv{\mu}} X_{\vv{\nu}} = \sum_{\vv{\lambda} \in \mathcal{P}^\mathcal{C}}
\left(
\lim_{n \to \infty} c_{\vv{\mu}, \vv{\nu}}^{\vv{\lambda}}(n)
\right) X_{\vv{\lambda}}
\]
We will show that this is multiplication is associative and unital, making $\mathcal{G}_\infty(\mathcal{C})$ into an associative $\mathbb{Q}$-algebra.
\end{definition}
\begin{remark} \label{integral_rmk}
We could have defined $\mathcal{G}_{\infty}(\mathcal{C})$ over $\mathbb{Z}$ instead of over $\mathbb{Q}$, but then certain elements of interest to us would no longer lie in the algebra.
\end{remark}
\noindent
We also introduce a collection of elements that will be important.
\begin{definition}
A \textit{basic hook} is an element $\vv{\lambda} \in \mathcal{P}^\mathcal{C}$ such that $\vv{\lambda}(U) = (1^n)$ for some $U \in I(\mathcal{C})$, and $\vv{\lambda}(V)$ is the empty partition for all $V$ different from $U$. By abuse of terminology we also refer to $X_{\vv{\lambda}} \in \mathcal{G}_{\infty}(\mathcal{C})$ as a basic hook whenever the indexing multipartition $\vv{\lambda}$ is a basic hook;  we also denote $X_{\vv{\lambda}}$ as $\overline{e}_n(U)$.
\end{definition}
\begin{theorem}\label{delconst}
Asymptotically as $t \to \infty$, the structure constants of the images of indecomposable objects of the Deligne category $S_t(\mathcal{C})$ in the relevant Grothendieck group agree with the structure constants of the $X_{\vv{\lambda}}$ in $\mathcal{G}_\infty(\mathcal{C})$.
\end{theorem}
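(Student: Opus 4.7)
The plan is to transport the structure constant computation from $S_t(\mathcal{C})$ to $\Wreath{n}$ via the specialization functor at integer $t = n$, and then apply the stability result already established. Recall from \cite{Mori} that the indecomposable objects of (the Karoubian completion of) $S_t(\mathcal{C})$ are parametrized by $\vv{\lambda} \in \mathcal{P}^{\mathcal{C}}$; write $[\vv{\lambda}]_t$ for these and $d_{\vv{\mu},\vv{\nu}}^{\vv{\lambda}}(t)$ for the resulting Krull-Schmidt structure constants on the indecomposable basis. The key input I will use is that for each nonnegative integer $n$ there is a symmetric monoidal specialization functor $F_n : S_n(\mathcal{C}) \to \Wreath{n}$, and that on indecomposables it is given (in the stable range, when $\vv{\lambda}[n]$ is defined) by $F_n([\vv{\lambda}]_n) \cong R_{\vv{\lambda}[n]}$.

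Fix $\vv{\mu}, \vv{\nu}, \vv{\lambda} \in \mathcal{P}^{\mathcal{C}}$, and take $n$ large enough that $\vv{\mu}[n]$, $\vv{\nu}[n]$, $\vv{\lambda}[n]$, and all $\vv{\rho}[n]$ appearing in the (finite) decomposition of $[\vv{\mu}]_n \otimes [\vv{\nu}]_n$ are defined. Since $F_n$ is monoidal and carries the relevant indecomposables to pairwise non-isomorphic simple objects, applying $F_n$ to the decomposition in $S_n(\mathcal{C})$ yields the decomposition $R_{\vv{\mu}[n]} \otimes R_{\vv{\nu}[n]} = \bigoplus_{\vv{\rho}} d_{\vv{\mu},\vv{\nu}}^{\vv{\rho}}(n)\, R_{\vv{\rho}[n]}$ in $\Wreath{n}$. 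Comparing with the definition of $c_{\vv{\mu},\vv{\nu}}^{\vv{\lambda}}(n)$ gives $d_{\vv{\mu},\vv{\nu}}^{\vv{\lambda}}(n) = c_{\vv{\mu},\vv{\nu}}^{\vv{\lambda}}(n)$ for all sufficiently large $n$. By Theorem \ref{StrCstStab}, this quantity converges as $n \to \infty$ to the structure constant of $X_{\vv{\lambda}}$ in $\mathcal{G}_\infty(\mathcal{C})$, which is exactly the claim of the theorem.

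The main obstacle lies outside of what the preceding sections directly supply: one must know that $F_n$ really does send the indecomposable $[\vv{\lambda}]_n$ to $R_{\vv{\lambda}[n]}$, and that no two distinct indecomposables contributing to the relevant product become isomorphic or negligible under $F_n$ once $n$ is large enough. This is the technical heart of the argument, and it is imported from the classification of indecomposables in the wreath product Deligne category in \cite{Mori}, together with the identification of basic hooks in \cite{Nate}. With that input in hand, the remainder is a routine bookkeeping of the Krull--Schmidt decomposition across a monoidal functor, combined with the stability established in Theorem \ref{StrCstStab}.
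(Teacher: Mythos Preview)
Your proposal is correct and follows essentially the same route as the paper: both arguments import from \cite{Mori} (and \cite{Nate}) the tensor functor $S_t(\mathcal{C}) \to \Wreath{n}$ together with the fact that it sends the indecomposable labelled $\vv{\lambda}$ to $R_{\vv{\lambda}[n]}$ in the stable range, and then read off equality of structure constants with $c_{\vv{\mu},\vv{\nu}}^{\vv{\lambda}}(n)$ for large $n$. The only cosmetic difference is packaging: the paper invokes Mori's Theorem 4.13 (hom spaces in $S_t(\mathcal{C})$ for large $t$ are computed in $\Wreath{n}$) and then a Yoneda argument to extract multiplicities, whereas you push the Krull--Schmidt decomposition directly through the monoidal functor $F_n$; these are equivalent ways of saying that $F_n$ is faithful enough on the relevant objects, and both rest on the same cited result.
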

\begin{proof}
The wreath product Deligne categories $S_t(\mathcal{C})$ admit tensor functors to $\Wreath{n}$ for $n \in \mathbb{Z}_{\geq 0}$, and their behaviour is discussed in Theorem 5.6 \cite{Mori}, and restated in our setting in Theorem 3.1 of \cite{Nate}. The object indexed by a multipartition $\vv{\lambda}$ is mapped to the irreducible object of $\Wreath{n}$ indexed by $\vv{\lambda}[n]$, if $\vv{\lambda}[n]$ is a well defined multipartition (i.e. $n - |\vv{\lambda}| \geq \vv{\lambda}(\mathbf{1})_1$), and otherwise it is zero. In \cite{Mori}, Theorem 4.13 demonstrates that for fixed objects of $S_t(\mathcal{C})$ and sufficiently large $t$, spaces of homomorphisms in $S_t(\mathcal{C})$ can be computed by using the tensor functor to pass to the wreath product categories $\Wreath{n}$. The tensor product multiplicities are determined by the homomorphism spaces in the following way. An object $M$ of $S_t(\mathcal{C})$ is determined by the information $\hom_{S_t(\mathcal{C})}(N, M)$ for all objects $N$ (by the Yoneda lemma). This information can be recovered by passing to sufficiently large wreath product categories $\Wreath{n}$, and in particular we can choose $M$ to be a tensor product of two objects. In the original setting of $\underline{\text{Rep}}(S_t)$, the result was proved by Deligne in \cite{deligne}.
\end{proof}
\noindent
We have a few preliminary facts about the algebra $\mathcal{G}_{\infty}(\mathcal{C})$.
\begin{theorem} \label{prelimFacts}
The algebra $\mathcal{G}_{\infty}(\mathcal{C})$ is a unital associative algebra satisfying the following:
\begin{enumerate}
\item $\mathcal{G}_{\infty}(\mathcal{C})$ is commutative if and only if $\mathcal{G}(\mathcal{C})$ is commutative.
\item $\mathcal{G}_{\infty}(\mathcal{C})$ is generated by the basic hooks $\overline{e}_n(U)$, where $n \geq 1$ and $U \in I(\mathcal{C})$.
\item There is a filtration $\mathcal{G}_{\infty}(\mathcal{C}) = \cup_{n \in \mathbb{N}}\mathcal{F}_n$, where $\mathcal{F}_n$ is spanned by $X_{\vv{\lambda}}$ with $|\vv{\lambda}| \leq n$.
\item The associated graded algebra of $\mathcal{G}_{\infty}(\mathcal{C})$ with respect to this filtration is isomorphic to $\bigotimes_{U \in I(\mathcal{C})} \Lambda_\mathbb{Q}^{(U)}$, where $\Lambda_{\mathbb{Q}}^{(U)}$ is the ring of symmetric functions with coefficients in $\mathbb{Q}$. If we write $f^{(U)}$ to indicate that the symmetric function $f$ is considered as an element of $\Lambda_{\mathbb{Q}}^{(U)}$, then the image of $[R_{\vv{\lambda}}]$ is $\prod_{U \in I(\mathcal{C})} s_{\vv{\lambda}(U)}^{(U)}$.
\end{enumerate}
\end{theorem}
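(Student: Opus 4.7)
The plan is to verify the four statements one by one, after first establishing associativity and unitality. Associativity of $\mathcal{G}_\infty(\mathcal{C})$ is inherited from each $\mathcal{G}(\Wreath{n})$ via Theorem \ref{StrCstStab}: both $(X_{\vv{\mu}} X_{\vv{\nu}})X_{\vv{\xi}}$ and $X_{\vv{\mu}}(X_{\vv{\nu}} X_{\vv{\xi}})$ are obtained as limits of associative products in the finite rings, hence agree. The unit is $X_{\vv{\emptyset}}$ (where $\vv{\emptyset}$ sends every simple to the empty partition), corresponding to the trivial modules $R_{\vv{\emptyset}[n]} = \mathbf{1}^{\boxtimes n} \otimes \mathcal{S}^{(n)}$.

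For claim (3), I would re-examine the final Mackey expansion in the proof of Theorem \ref{StrCstStab}: each summand is induced from $S_\sigma \times S_{n - |\sigma|}$ with $|\sigma| \leq |\vv{\mu}| + |\vv{\nu}|$, and its inner factor in the trivial component is a stable Kronecker product $\mathcal{S}^{(n - |\tau^{(0)}| - |\sigma|, \tau^{(0)})} \otimes \mathcal{S}^{(n - |\rho^{(0)}| - |\sigma|, \rho^{(0)})}$. Lemma \ref{dvir_result} decomposes this (stably) into Specht modules $\mathcal{S}^{(n - |\xi| - |\sigma|, \xi)}$ with $|\xi| \leq |\tau^{(0)}| + |\rho^{(0)}|$. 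Tracking sizes through the remaining Littlewood-Richardson induction (controlled by Proposition \ref{LRS}), every resulting $R_{\vv{\lambda}[n]}$ satisfies $|\vv{\lambda}| \leq |\vv{\mu}| + |\vv{\nu}|$, establishing $\mathcal{F}_m \cdot \mathcal{F}_n \subseteq \mathcal{F}_{m + n}$.

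For claim (4), the same computation modulo $\mathcal{F}_{m + n - 1}$ isolates top-degree contributions: only the double coset with maximal intersection (i.e.\ $t$ conjugating so that $H_{\vv{\nu}[n]}$ meets $H_{\vv{\mu}[n]}$ componentwise in each non-trivial simple) contributes, and within it the top Kronecker multiplicity is, by Lemma \ref{dvir_result}, exactly the Littlewood-Richardson coefficient. Summing across $U \in I(\mathcal{C})$, the associated-graded product $[R_{\vv{\mu}}] \cdot [R_{\vv{\nu}}]$ decomposes with coefficients $\prod_U c_{\vv{\mu}(U), \vv{\nu}(U)}^{\vv{\lambda}(U)}$, which is precisely the expansion of $(\prod_U s_{\vv{\mu}(U)}^{(U)}) \cdot (\prod_U s_{\vv{\nu}(U)}^{(U)})$ in $\bigotimes_U \Lambda_\mathbb{Q}^{(U)}$. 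Hence the assignment $[R_{\vv{\lambda}}] \mapsto \prod_U s_{\vv{\lambda}(U)}^{(U)}$ is a ring isomorphism of the associated graded.

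Claim (2) follows from (4): since $\overline{e}_n(U)$ has leading term $s_{(1^n)}^{(U)} = e_n^{(U)}$ in the associated graded and $\bigotimes_U \Lambda_\mathbb{Q}^{(U)}$ is generated by the $e_n^{(U)}$, a filtration-induction argument writes every $X_{\vv{\lambda}}$ as a polynomial in basic hooks modulo lower filtration degree, and iterating terminates. For claim (1), if $\mathcal{G}(\mathcal{C})$ is commutative then the Mackey expansion of Lemma \ref{tensorR} is symmetric under $\vv{\mu} \leftrightarrow \vv{\nu}$ (using the involution $t \mapsto t^{-1}$ on double coset representatives and the commutativity of each $U_i \otimes V_i$ in $\mathcal{G}(\mathcal{C})$), so each $\mathcal{G}(\Wreath{n})$ and hence $\mathcal{G}_\infty(\mathcal{C})$ is commutative; conversely, if $[U][V] \neq [V][U]$ in $\mathcal{G}(\mathcal{C})$, then the top non-vanishing filtration component of $\overline{e}_1(U) \overline{e}_1(V) - \overline{e}_1(V) \overline{e}_1(U)$ records exactly this difference and is therefore nonzero. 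The main obstacle is claim (4), since isolating which Mackey summands remain at top degree and verifying that the associated-graded structure constants match Littlewood-Richardson coefficients requires careful bookkeeping through the multipartition expansion and relies essentially on Dvir's result.
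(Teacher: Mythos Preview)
Your treatment of associativity, unitality, and parts (1) and (2) matches the paper's proof essentially verbatim. For (3) and (4) the paper takes a shortcut, simply invoking the $|\vv{\lambda}|$-filtration of \cite{Nate} and the known identification of its associated graded with $\bigotimes_U \Lambda_{\mathbb{Q}}^{(U)}$; your attempt to extract (3) and (4) directly from the Mackey expansion of Theorem~\ref{StrCstStab} is more self-contained, and your size-tracking argument for (3) via Lemma~\ref{dvir_result} is correct.

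There is, however, a genuine error in your argument for (4). You identify the top-degree contribution as coming from ``the double coset with maximal intersection (i.e.\ $t$ conjugating so that $H_{\vv{\nu}[n]}$ meets $H_{\vv{\mu}[n]}$ componentwise in each non-trivial simple)''. This is backwards: that coset forces overlaps between the $S_{|\vv{\mu}(U)|}$ and $S_{|\vv{\nu}(U)|}$ factors, producing internal tensor products $U\otimes V$ in $\mathcal{C}$ and hence \emph{lower}-degree terms. Concretely, take $\vv{\mu}(U)=\vv{\nu}(U)=(1)$ for a single $U\neq\mathbf{1}$: the componentwise coset ($t=1$) has intersection $S_1\times S_{n-1}$ (indeed maximal) and yields $(U\otimes U)^{\boxtimes 1}$, a degree-$1$ contribution; the degree-$2$ term comes instead from the coset with intersection $S_1\times S_1\times S_{n-2}$, where the two non-trivial factors sit \emph{disjointly}, each inside the other's large trivial block. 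In general the top-degree coset is the one with $C_{i,j}=0$ whenever both $i$ and $j$ index non-trivial simples; its intersection is conjugate to $\prod_U S_{|\vv{\mu}(U)|}\times\prod_V S_{|\vv{\nu}(V)|}\times S_{n-m_\mu-m_\nu}$, and after taking the leading (all-trivial) Littlewood--Richardson pieces in the restrictions of the two $\mathbf{1}$-components, the induction back to $S_n$ gives precisely $\prod_U c_{\vv{\mu}(U),\vv{\nu}(U)}^{\vv{\lambda}(U)}$. With this correction your route goes through; Dvir's result is then only needed for the $\mathbf{1}$-component, not as the mechanism producing the Littlewood--Richardson structure on the non-trivial simples.
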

\begin{proof}
Firstly, the multiplication in $\mathcal{G}_{\infty}(\mathcal{C})$ is seen to be associative by considering $[R_{\vv{\lambda}[n]}][R_{\vv{\mu}[n]}][R_{\vv{\nu}[n]}]$ in the associative algebra $\mathcal{G}(\Wreath{n})$. For $n$ sufficiently large, the coefficient of $[R_{\vv{\rho}[n]}]$ in that element becomes equal to the coefficient of $X_{\vv{\rho}}$ in $X_{\vv{\lambda}}X_{\vv{\mu}}X_{\vv{\nu}}$, regardless of how the latter product is parenthesised. The basis element corresponding to the empty partition is the unit element.
\newline \newline \noindent
The commutativity or non-commutativity of multiplication can be seen from the proof of Theorem \ref{StrCstStab}, where (up to conjugation by the double coset representative $t$) the only change between $[R_{\vv{\mu}}][R_{\vv{\nu}}]$ and $[R_{\vv{\nu}}][R_{\vv{\mu}}]$ is the product $U_i \otimes V_i$ (versus $V_i \otimes U_i$) of objects of $\mathcal{C}$. Proposition \ref{Wprop3} was used to write the result in terms of $[U]$ for $U \in I(\mathcal{C})$, and equal results are obtained for $U_i \otimes V_i$ and $V_i \otimes U_i$ if and only if $[U_i \otimes V_i] = [V_i \otimes U_i]$. This holds for all possible choices of $U_i$ and $V_i$ if and only if $\mathcal{G}(\mathcal{C})$ is commutative.
\newline \newline \noindent
The filtration is essentially the same as the $|\lambda|$-filtration defined in Definition 2.7 of \cite{Nate}. 
In particular, the associated graded algebra (with basis induced from $X_{\vv{\lambda}}$) has structure constants equal to those of the ring of symmetric functions with the Schur function basis. In particular,  the basic hooks $\overline{e}_n(U)$ correspond to elementary symmetric functions $e_n^{(U)}$ in $\Lambda_{\mathbb{Q}}^{(U)}$. This means that the basic hooks generate the associated graded algebra, and hence they generate $\mathcal{G}_{\infty}(\mathcal{C})$.
\end{proof}
\begin{example}
In the case where $\mathcal{C} = kG-mod$ for a finite group $G$, $\Wreath{n}$ is equivalent to the category of finite-dimensional representations for the wreath product $G^{n} \rtimes S_n$. In this case, $\mathcal{G}_{\infty}(\mathcal{C})$ is commutative, it follows that $\mathcal{G}_{\infty}(\mathcal{C})$ is isomorphic to a free polynomial algebra in the basic hooks; see Corollary 2.9 of \cite{Nate}. In our setting, $\mathcal{G}(\mathcal{C})$ may not be commutative, in which case $\mathcal{G}_{\infty}(\mathcal{C})$ cannot possibly be a free polynomial algebra. Nevertheless, we will give a description of the algebra structure of the ring in terms of basic hooks, and also give generating functions describing how the $X_{\lambda}$ are expressed in terms of basic hooks.
\end{example}

\section{The Ring Structure of $\mathcal{G}_{\infty}(\mathcal{C})$}
\subsection{The Elements $T_n(M)$} \noindent
We use the following construction to relate $\Wreath{n}$ with $\mathcal{G}_\infty(\mathcal{C})$; informally, we take $n \to \infty$.
\begin{definition}
Suppose that $H$ is a subgroup of $S_n$ and $M$ is a module over $\hopf \wr H$.
In $\mathcal{G}(\Wreath{n+m})$ We may write
\begin{equation*}
[\Ind{H \times S_m}{S_{m+n}}\left(M \boxtimes \left(\mathbf{1}^{\boxtimes m} \otimes \mathbf{1}_{S_m}\right) \right)] = \sum_{\vv{\mu} \in \mathcal{P}^{\mathcal{C}}} c_{\vv{\mu}}(M, m) [R_{\vv{\mu}[n+m]}]
\end{equation*}
Here $\mathbf{1}_{S_m}$ is the trivial representation of $S_m$ and for any fixed $n$ we only sum over $\vv{\mu}$ such that $\vv{\mu}[n+m]$ is defined. By transitivity of induction, we may replace $M$ and $H$ with $\Ind{H}{S_n}(M)$ and $S_n$ respectively. In this case, if $\Ind{H}{S_{n}}(M)$ is a simple object, it is induced from an object of the form (where $\vv{\rho} \in \mathcal{P}^\mathcal{C}$)
\[
M = \left(\mathbf{1}^{\boxtimes |\vv{\rho}[n](\mathbf{1})|} \otimes \mathcal{S}^{\vv{\rho}[n](\mathbf{1})}\right) \boxtimes \left(\boxtimes_{U \neq \mathbf{1}} \left(U^{\boxtimes |\vv{\rho}[n](U)|} \otimes \mathcal{S}^{\vv{\rho}[n](U)} \right) \right)
\]
Substituting this into the equation defining $c_{\vv{\mu}}(M, m)$, we see that the stability of Littlewood-Richardson coefficients (Proposition \ref{LRS}) implies that there is a nonzero contribution to only finitely many $c_{\mu}(M,m)$, and the contribution becomes constant for $m$ sufficiently large. We define
\[
\lim_{m \to \infty} \Ind{H \times S_m}{S_{m+n}}\left(M \boxtimes \left(\mathbf{1}^{\boxtimes m} \otimes \mathbf{1}_{S_m}\right)\right) = \sum_{\mu \in \mathcal{P}_{\mathcal{C}}} \left(
\lim_{m \to \infty} c_\mu(M, m)
\right) X_{\mu}
\]
By the linearity of induction, we may extend this definition to allow $M$ to be not necessarily simple, or indeed a formal difference of objects (when working with Grothendieck groups). Note that this construction only depends on the class of $M$ in the Grothendieck group because induction is an exact functor. So, we may write $\lim_{m \to \infty} [\Ind{H \times S_m}{S_{m+n}}] (-)$ when the argument is an element of a Grothendieck group (rather than an object of a category), and the operation is still well defined.
\end{definition}
\noindent
We now define a generating set of $\mathcal{G}_{\infty}(\mathcal{C})$ with favourable multiplicative properties.
\begin{definition}
For an object $M$ of $\mathcal{C}$ and $n \in \mathbb{Z}_{>0}$, we define the following elements of $\mathbb{Q} \otimes_{\mathbb{Z} }\mathcal{G}(\Wreath{n})$:
\[
T_{n}^{f}(M) = \frac{1}{n} \sum_{\lambda \vdash n} \chi_{(n)}^{\lambda} [M^{\boxtimes |\lambda|} \otimes \mathcal{S}^{\lambda}]
\]
We construct an analogous element of $\mathcal{G}_{\infty}(\mathcal{C})$ as follows.
\[
T_n(M) = \frac{1}{n}\sum_{\lambda \vdash n}\chi_{(n)}^{\lambda}\lim_{m \to \infty}\Ind{S_n \times S_m}{S_{n+m}}\left(\left(M^{\boxtimes |\lambda|} \otimes \mathcal{S}^{\lambda} \right) \boxtimes \left(\mathbf{1}^{\boxtimes m} \otimes \mathbf{1}_{S_m}\right)\right)
\]
\noindent
We write $[\Ind{S_{n}^{k}\times S_m}{S_{nk+m}}]$ for function on Grothendieck groups $\mathcal{G}(\Wreath{n})^{\boxtimes k} \boxtimes \mathcal{G}(\Wreath{m}) \to \mathcal{G}(\Wreath{nk+m})$ induced by the induction functor. Now we let
\[
T_n(M_1, M_2, \cdots, M_k) = \lim_{m \to \infty} [\Ind{S_{n}^{k}\times S_m}{S_{nk+m}}]\left(T_n^{f}(M_1)\boxtimes T_n^{f}(M_2) \boxtimes \cdots \boxtimes T_n^{f}(M_k) \boxtimes [\left(\mathbf{1}^{\boxtimes m} \otimes \mathbf{1}_{S_m}\right)]\right)
\]
As before, $T_n(M_1, M_2, \cdots, M_k)$ only depends on the class of the $M_i$ in the corresponding Grothendieck groups. \end{definition}

\begin{remark}
The character orthogonality relation
\[
\frac{1}{n} \sum_{\lambda \vdash n} \chi_{(n)}^{\lambda} \chi^{\lambda}_{\nu} = \delta_{\nu, (n)}
\]
suggests that one can think of $T_n(U)$ as a generalisation of the indicator function of cycle type $(n)$ in a copy of the class functions on $S_n$ associated to $U \in I(\mathcal{C})$.  
This is based on the fact that the virtual character associated to $\frac{1}{n} \sum_{\lambda \vdash n} \chi_{(n)}^{\lambda}\mathcal{S}^{\lambda}$ is the indicator function of $n$-cycles on $S_n$. 
\end{remark}

\begin{proposition} \label{indepProp}
We have the following properties of the $T_n(U)$, for $U\in I(\mathcal{C})$:
\begin{enumerate}
\item The image of $T_n(U)$ in the associated graded algebra of $\mathcal{G}_{\infty}(\mathcal{C})$, which we identify with $\bigotimes_U \Lambda_\mathbb{Q}^{(U)}$, is $p_n^{(U)}/n$. That is, the $n$-th power sum symmetric function in $\Lambda_{\mathbb{Q}}^{(U)}$, divided by $n$.
\item Fix a total order on $\mathbb{Z}_{>0} \times I(\mathcal{C})$. Consider the monomials in $T_{n}(U)$ for $(n, U) \in \mathbb{Z}_{>0} \times I(\mathcal{C})$ where the factors occur in order consistent with the total order (``PBW monomials''). These monomials are linearly independent.
\item The $T_n(U)$ generate $\mathcal{G}_{\infty}(\mathcal{C})$.
\item $T_n(U)$ lies in the $n$-th filtered component of $\mathcal{G}_{\infty}(\mathcal{C})$.
\end{enumerate}
\end{proposition}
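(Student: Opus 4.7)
My plan is to establish (4) and (1) by direct computation, and then to deduce (3) and (2) from Theorem \ref{prelimFacts}(4) by standard filtered--graded arguments. The key input is understanding the induction $\Ind{S_n \times S_m}{S_{n+m}}((U^{\boxtimes n} \otimes \mathcal{S}^\lambda) \boxtimes (\mathbf{1}^{\boxtimes m} \otimes \mathbf{1}_{S_m}))$ appearing inside the definition of $T_n(U)$.

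For (1) and (4), I split into two cases. If $U \neq \mathbf{1}$, the multipartition $\vv{\mu}$ with $\vv{\mu}(U) = \lambda$ and $\vv{\mu}(\mathbf{1}) = (m)$ has Young subgroup $H_{\vv{\mu}} = S_n \times S_m$, so by definition the induction produces exactly the simple object $R_{\vv{\mu}} = R_{\vv{\rho}_\lambda[n+m]}$, where $\vv{\rho}_\lambda(U) = \lambda$ and $\vv{\rho}_\lambda(V) = \emptyset$ otherwise. Passing to the limit,
\[
T_n(U) = \frac{1}{n} \sum_{\lambda \vdash n} \chi_{(n)}^\lambda X_{\vv{\rho}_\lambda},
\]
which lies in $\mathcal{F}_n$, and whose degree-$n$ image in $\bigotimes_V \Lambda_\mathbb{Q}^{(V)}$ is $\frac{1}{n} \sum_\lambda \chi_{(n)}^\lambda s_\lambda^{(U)} = p_n^{(U)}/n$ by the standard identity $\sum_\lambda \chi_{(n)}^\lambda s_\lambda = p_n$. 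If $U = \mathbf{1}$, the induction instead expands by Pieri's rule into $\Ind{S_n \times S_m}{S_{n+m}}(\mathcal{S}^\lambda \boxtimes \mathcal{S}^{(m)}) = \bigoplus_\nu \mathcal{S}^\nu$ with $\nu/\lambda$ a horizontal $m$-strip, giving contributions $X_{\vv{\rho}}$ with $\vv{\rho}(\mathbf{1}) = (\nu_2, \nu_3, \ldots)$. The horizontal-strip interlacing $\lambda_i \leq \nu_i \leq \lambda_{i-1}$ for $i \geq 2$ bounds $|\vv{\rho}(\mathbf{1})| \leq n$, with equality precisely when $\nu_i = \lambda_{i-1}$ for all $i \geq 2$, i.e.\ when $\vv{\rho}(\mathbf{1}) = \lambda$; all other strips contribute to strictly lower filtration. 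This case is the main technical point, since I must separate the unique top-degree Pieri summand from the many lower-filtration terms; once isolated, it reproduces $p_n^{(\mathbf{1})}/n$ as before.

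With (1) and (4) in hand, (3) follows by induction on filtration degree: since $\bigotimes_V \Lambda_\mathbb{Q}^{(V)}$ is generated over $\mathbb{Q}$ by the $p_n^{(V)}$, for any $X_{\vv{\lambda}} \in \mathcal{F}_d$ I write its degree-$d$ symbol as a polynomial in the $p_n^{(V)}$, lift this to a polynomial $P$ in the $T_n(V)$ (scaling by the $n$), and apply the inductive hypothesis to $X_{\vv{\lambda}} - P \in \mathcal{F}_{d-1}$. For (2), I note that although $\mathcal{G}_\infty(\mathcal{C})$ may be noncommutative, Theorem \ref{prelimFacts}(4) identifies its associated graded with the commutative polynomial algebra $\bigotimes_V \Lambda_\mathbb{Q}^{(V)}$, which is free on the $p_n^{(V)}$. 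A PBW monomial $T_{n_1}(U_1) \cdots T_{n_k}(U_k)$ of filtered degree $\sum_i n_i$ therefore has leading symbol $\prod_i p_{n_i}^{(U_i)}/n_i$, and distinct PBW monomials, having distinct multisets $\{(n_i,U_i)\}$, produce distinct (hence linearly independent) monomials in the free polynomial algebra. Any hypothetical linear dependence among the PBW monomials in $\mathcal{G}_\infty(\mathcal{C})$ would then, upon taking the top filtered degree, yield a vanishing linear combination of these algebraically independent symbols, forcing the top-degree coefficients to vanish; iterating downwards completes the argument.
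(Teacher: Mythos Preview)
Your proof is correct and follows essentially the same approach as the paper's. Both arguments establish (1) via the identity $\sum_{\lambda\vdash n}\chi_{(n)}^{\lambda}s_\lambda=p_n$ (equivalently, that the virtual character $\tfrac{1}{n}\sum_\lambda\chi_{(n)}^{\lambda}\mathcal{S}^{\lambda}$ is the indicator function of $n$-cycles), handle (4) in the $U=\mathbf{1}$ case by the Pieri rule exactly as you do, and deduce (2) and (3) from the associated graded structure of Theorem~\ref{prelimFacts}(4); the paper's proof is simply terser, while you spell out the filtered--graded induction and the leading-symbol argument for linear independence in more detail.
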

\begin{proof}
The first claim follows from the fact that the virtual character associated to $\frac{1}{n} \sum_{\lambda \vdash n} \chi_{(n)}^{\lambda}\mathcal{S}^{\lambda}$ is the indicator function of $n$-cycles on $S_n$. The second follows from the fact that the $p_n^{(U)}$ are algebraically independent in the associated graded algebra. Since the $p_n^{(U)}$ generate the associated graded algebra, the third claim follows. The final claim is immediate if $U$ is different from $\mathbf{1}$, for then $T_n(U)$ becomes a linear combination of $X_{\vv{\lambda}}$ with $|\vv{\lambda}| = n$. The $U = \mathbf{1}$ case follows from the Pieri rule (see also Remark \ref{adjointRmk}) which describes certain Littlewood-Richardson coefficients; we wish to decompose $\Ind{S_n \times S_m}{S_{n+m}} (\mathcal{S}^{\lambda} \boxtimes \mathbf{1}_{S_m})$ into $\mathcal{S}^{\mu[n+m]}$, with $|\mu| \leq n$. The $\mu$ that appear are obtained from $\lambda$ by adding $m$ boxes, no two in the same column, and then removing the top row. Removing the top row removes one box from each column, so any $\mu$ obtained this way satisfies $|\mu| \leq |\lambda| = n$.
\end{proof}
\noindent
The following lemma will underpin much of what follows. Note that restriction is an exact functor, so it descends to a function between Grothendieck groups.
\begin{lemma}\label{resLemma}
Any restriction of $T_n^{f}(U)$ to (the Grothendieck group of) a proper Young subgroup $S_\lambda$ of $S_n$ is zero. 
\end{lemma}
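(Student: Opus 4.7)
The plan is to rewrite $T_n^f(U)$ as the value of the additive functor $V \mapsto U^{\boxtimes n} \otimes V$ on a suitable virtual representation of $S_n$, and then to reduce the claim to a character-theoretic calculation on $S_n$. Specifically, set $W = \frac{1}{n}\sum_{\mu \vdash n} \chi_{(n)}^\mu [\mathcal{S}^\mu]$ in the rational representation ring of $S_n$; by linearity of $V \mapsto U^{\boxtimes n} \otimes V$ we have $T_n^f(U) = [U^{\boxtimes n} \otimes W]$. The orthogonality relation recalled in the remark preceding the lemma shows that the class function corresponding to $W$ is exactly the indicator function of the conjugacy class of $n$-cycles in $S_n$.

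The key observation is that a proper Young subgroup $S_\lambda$ of $S_n$, i.e. one with $\lambda \vdash n$ and $l(\lambda) \geq 2$, contains no $n$-cycle: its elements preserve the decomposition of $\{1, 2, \ldots, n\}$ into the blocks $A_i$ of sizes $\lambda_i < n$, so each element of $S_\lambda$ has at least $l(\lambda) \geq 2$ orbits on $\{1, \ldots, n\}$. Hence the indicator function of $n$-cycles restricts to the zero class function on $S_\lambda$, and since irreducible characters of $S_\lambda$ are linearly independent as class functions, this forces $\Res{S_\lambda}{S_n}(W) = 0$ in the rational representation ring of $S_\lambda$.

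To conclude, iterate Proposition \ref{Wprop2} to see that $\Res{S_\lambda}{S_n}[U^{\boxtimes n} \otimes V]$ is computed from the decomposition $\Res{S_\lambda}{S_n} V = \bigoplus_i \boxtimes_j V_j^{(i)}$ by taking $\bigoplus_i \boxtimes_j [U^{\boxtimes \lambda_j} \otimes V_j^{(i)}]$. This assignment is $\mathbb{Q}$-linear in the class of $V$, so applying it to the virtual representation $W$ yields $\Res{S_\lambda}{S_n}(T_n^f(U)) = 0$, as desired. The only non-routine step is the character-theoretic vanishing in the middle paragraph; the first and third paragraphs are essentially bookkeeping needed to pass between the symmetric group and wreath product settings.
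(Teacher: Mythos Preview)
Your proof is correct and follows essentially the same approach as the paper: use Proposition \ref{Wprop2} to reduce the wreath-product restriction to the restriction of the virtual $S_n$-representation whose character is the indicator function of $n$-cycles, then observe that a proper Young subgroup contains no $n$-cycle. You have simply spelled out in more detail the bookkeeping that the paper leaves implicit.
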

\begin{proof}
We use Proposition \ref{Wprop2}, part 2. It now suffices to understand how the indicator function of $n$-cycles restricts to $S_\lambda$. The result now follows from the fact that the only Young subgroup of $S_n$ containing an $n$-cycle is all of $S_n$.
\end{proof}
\noindent
In order to understand the algebra structure of $\mathcal{G}_{\infty}(\mathcal{C})$, we determine the commutator of two elements of the form $T_n(U)$ (recall that such elements generate the algebra).
\begin{theorem} \label{commThm}
Let $U_1, U_2 \in I(\mathcal{C})$. If $n \neq m$, the commutator of $T_n(U_1)$ and $T_m(U_2)$ vanishes: we have $[T_n(U_1), T_m(U_2)] = 0$.
 If $N_{U_1, U_2}^{U_3}$ is the structure tensor of the Grothendieck ring (so that $[U_1][U_2] = \sum_{U_3} N_{U_1, U_2}^{U_3}[U_3]$), then we have:
\[
[T_{n}(U_1), T_{n}(U_2)] = \sum_{U_3} (N_{U_1, U_2}^{U_3} - N_{U_2, U_1}^{U_3}) T_{n}(U_3)
\]
\end{theorem}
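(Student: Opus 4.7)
The plan is to work at finite level in $\mathcal{G}(\Wreath{N}) \otimes \mathbb{Q}$ for $N$ large, decompose the relevant product via Mackey theory, and pass to the stable limit defining $\mathcal{G}_\infty(\mathcal{C})$. The element $T_n(U_1)$ is represented in $\mathcal{G}(\Wreath{N})$ by the induction $\Ind{S_n \times S_{N-n}}{S_N}(T_n^f(U_1) \boxtimes \mathbf{1}_{N-n})$, where I abbreviate the trivial $\hopf \wr S_{N-n}$-module by $\mathbf{1}_{N-n}$; similarly for $T_m(U_2)$. Their product in $\mathcal{G}(\Wreath{N})$ expands via Proposition \ref{mackey_product} as a sum over $(S_n \times S_{N-n}, S_m \times S_{N-m})$-double cosets in $S_N$, which by Section 6 are classified by an overlap integer $c = C_{1,1}(t) \in \{0, 1, \ldots, \min(n,m)\}$, with intersection $H \cap tKt^{-1}$ conjugate to $S_c \times S_{n-c} \times S_{m-c} \times S_{N-n-m+c}$.

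The decisive simplification comes from Lemma \ref{resLemma}: the restriction of $T_n^f(U_1)$ to $S_c \times S_{n-c}$ vanishes unless $c \in \{0, n\}$, and the analogous fact for $T_m^f(U_2)$ demands $c \in \{0, m\}$. Hence, for $N$ large, only two double cosets can contribute: $c = 0$ (always present) and $c = n = m$ (only if $n = m$). The $c = 0$ term contributes $\Ind{S_n \times S_m \times S_{N-n-m}}{S_N}(T_n^f(U_1) \boxtimes T_m^f(U_2) \boxtimes \mathbf{1}_{N-n-m})$ to $T_n(U_1) T_m(U_2)$, and the corresponding contribution to $T_m(U_2) T_n(U_1)$ agrees up to reordering the first two Young factors, hence is isomorphic via conjugation by a block-swapping element of $S_N$. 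This immediately gives $[T_n(U_1), T_m(U_2)] = 0$ when $n \neq m$.

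When $n = m$, the $c = 0$ contribution still cancels by the above argument, and only the $c = n$ term matters. Here the fully ordered double coset representative is the identity, so no conjugation twist appears, and the contribution is $\Ind{S_n \times S_{N-n}}{S_N}\bigl((T_n^f(U_1) \otimes T_n^f(U_2)) \boxtimes \mathbf{1}_{N-n}\bigr)$. The crucial identity to establish is
\[
T_n^f(U_1) \otimes T_n^f(U_2) = T_n^f(U_1 \otimes U_2),
\]
which follows from Proposition \ref{Wprop1} combined with the observation that the virtual Specht factor $\frac{1}{n}\sum_{\lambda \vdash n}\chi^{\lambda}_{(n)} \mathcal{S}^{\lambda}$ has character equal to the indicator function of $n$-cycles, which is idempotent under pointwise multiplication. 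Since $T_n^f$ extends $\mathbb{Q}$-linearly and additively from $\mathcal{G}(\mathcal{C})$ (a fact itself deduced from Lemma \ref{resLemma} applied through Proposition \ref{Wprop2}), one gets $T_n^f(U_1 \otimes U_2) = \sum_{U_3} N_{U_1, U_2}^{U_3} T_n^f(U_3)$. Passing to the stable limit and subtracting the reversed product yields the claimed commutator $\sum_{U_3} (N_{U_1, U_2}^{U_3} - N_{U_2, U_1}^{U_3}) T_n(U_3)$.

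The main obstacle is the careful Mackey bookkeeping: correctly identifying the intersections $H \cap tKt^{-1}$ as Young subgroups, checking that no conjugation twist is introduced in the $c = n$ case, and establishing the multiplicative identity $T_n^f(U_1) \otimes T_n^f(U_2) = T_n^f(U_1 \otimes U_2)$. This last identity ultimately rests on the elementary fact that the indicator function of $n$-cycles on $S_n$ is idempotent under pointwise multiplication, which is really why the $T_n(U)$ are the natural generators with respect to which the commutator formula takes such a clean shape.
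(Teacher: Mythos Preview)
Your proposal is correct and follows essentially the same approach as the paper: Mackey decomposition at finite level, elimination of all but the $c=0$ and $c=n=m$ double cosets via Lemma \ref{resLemma}, symmetry of the $c=0$ term, and the identity $T_n^f(U_1)\otimes T_n^f(U_2)=T_n^f(U_1\otimes U_2)$ from idempotency of the $n$-cycle indicator. The only minor discrepancy is your justification of the additivity $T_n^f(M)=T_n^f(N)+T_n^f(M/N)$: the paper derives this not from Proposition \ref{Wprop2} but from Proposition \ref{Wprop3} (the filtration identity for $[M^{\boxtimes n}\otimes\mathbf{1}_{S_n}]$) combined with Lemma \ref{ind_res_tensor} and then Lemma \ref{resLemma} to kill the intermediate terms, so you may want to adjust that citation.
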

\begin{proof}
To calculate the commutator $[T_n(U_1), T_m(U_2)]$, we calculate the analogous quantity in the ring $\mathbb{Q} \otimes_{\mathbb{Z}} \mathcal{G}(\Wreath{m+n+k})$ for $k$ sufficiently large.
As $T_n(U_1), T_m(U_2)$ are defined as the image of a linear combination of induced objects in the Grothendieck group, we may apply the Mackey theory formalism to calculate $T_n(U_1) T_m(U_2)$. 
\newline \newline \noindent
Firstly, the number of $(S_n \times S_{m+k}, S_{m} \times S_{n+k})$-double cosets in $S_{m+n+k}$ is $\min(m,n)+1$. This can be seen from calculating the possible $C_{i,j}(\sigma)$ that can arise. Both $i$ and $j$ may take two different values. Therefore $C_{1,1}(\sigma)$ determines all other $C_{i,j}(\sigma)$ via identities such as $C_{i,1}(\sigma) + C_{i,2}(\sigma)= |B_1| = m$ (similarly $C_{1,j}(\sigma)$ and $C_{2,j}(\sigma)$ determine each other). So, double cosets are determined by a single invariant $C_{1,1}(\sigma)$, namely, the number of elements of $\{1,2,\ldots, m\}$ that are mapped to the set $\{1,2,\ldots, n\}$. Clearly $C_{1,1}(\sigma)$ can take any of the values $0, 1, \ldots, \min(m,n)$.
\newline \newline \noindent
Consider a double coset representative $\sigma$ (interpreted as a bijection from the set $\{1,2,\ldots, n+m+k\}$ to itself) such that $f(\{1,2, \ldots, n\}) \neq \{1,2, \ldots, m\}$ and $f(\{1,2,\ldots, n\}) \cap \{ 1,2,\ldots, m\} \neq \varnothing$. In the Mackey theoretic computation the summand coming from a twist by $\sigma$ will involve restricting to $(S_n \times S_{m+k}) \cap \sigma (S_{m} \times S_{n+k})\sigma^{-1}$, which will not contain the entirety of $S_n$ (considered as a subgroup of $S_n \times S_{m+k}$):
\begin{align*} 
\Ind{(S_n \times S_{m+k}) \cap \sigma (S_{m} \times S_{n+k})\sigma^{-1}}{S_{n+m+k}}\left( 
 \Res{(S_n \times S_{m+k}) \cap \sigma (S_{m} \times S_{n+k})\sigma^{-1}}{S_n \times S_{m+k}}
 \left(
T_n^f(U_1) \boxtimes \left( \mathbf{1}^{\boxtimes (m+k)} \otimes \mathbf{1}_{S_{m+k}}\right)
 \right)
 \right. \\
 \otimes \left. \Res{(S_n \times S_{m+k}) \cap \sigma (S_{m} \times S_{n+k})\sigma^{-1}}{\sigma (S_{m} \times S_{n+k})\sigma^{-1}}\left(
 \sigma \left(T_m^f(U_2) \boxtimes  \left( \mathbf{1}^{\boxtimes (n+k)} \otimes \mathbf{1}_{S_{n+k}}\right)\right)
\right) \right)
\end{align*}
In particular, the calculation involves restricting $T_n(U)$ to a proper Young subgroup of $S_n$, giving zero by Lemma \ref{resLemma}.
There are two cases that need to be considered; $C_{1,1}(\sigma) = 0$ and $C_{1,1}(\sigma) = n = m$. The first case gives rise to the following term:
\[
\Ind{S_n \times S_m \times S_k}{S_{n+m+k}}(T_n^f(U_1) \boxtimes T_m^f(U_2) \boxtimes [\mathbf{1}^{\boxtimes k} \otimes \mathbf{1}_{S_k}])
\]
Since $S_n \times S_m \times S_k$ and $S_m \times S_n \times S_k$ are conjugate subgroups of $S_{m+n+k}$, it follows that if $T_n^f(U_1)$ and $T_m^f(U_2)$ were interchanged, the contribution would be the same, in particular, the contribution of the term associated to this double coset is cancelled out in the commutator by the corresponding term in $T_m(U_2)T_n(U_1)$. In particular, if $n \neq m$, $[T_n(U_1), T_m(U_2)] = 0$.
\newline \newline \noindent
If $n = m$, then there we consider the contribution from the double coset representative which identifies the symmetric group factors associated to $S_n$ and $S_m$ in the respective Young subgroups. Working in $\mathcal{G}(\Wreath{n})$ and applying Proposition \ref{Wprop1} we find:
\[
T_n^f(U_1) T_n^f(U_2) = T_n^f (U_1 \otimes U_2)
\]
Here we used the fact that the indicator function of $n$-cycles (considered as a class function on $kS_n$) is an idempotent for the tensor product. We may use Proposition \ref{Wprop3} to express $T_n(U_1 \otimes U_2)$ in terms of $T_n(U)$ for $U \in I(\mathcal{C})$. We use the equation in Proposition \ref{Wprop3} multiplied by $T_n^f(\mathbf{1})$ (take $N$ to be a subobject of $M$ in $\mathcal{C}$):
\begin{eqnarray*}
T_n^{f}(M) &=& \sum_{\lambda \vdash n} \frac{\chi_{(n)}^{\lambda}}{n}[M^{\boxtimes n} \otimes \mathcal{S}^\lambda] \\
&=& [M^{\boxtimes n} \otimes \mathbf{1}_{S_n}] \sum_{\lambda \vdash n} \frac{\chi_{(n)}^{\lambda}}{n}[\mathbf{1}^{\boxtimes n} \otimes \mathcal{S}^\lambda] \\
&=& [M^{\boxtimes n} \otimes \mathbf{1}_{S_n}] T_n^{f}(\mathbf{1}) \\
&=& \sum_{r=0}^{n} [\Ind{S_r \times S_{n-r}}{S_n}\left((N^{\boxtimes r} \otimes \mathbf{1}_{S_r}) \boxtimes ((M/N)^{\boxtimes (n-r)} \otimes \mathbf{1}_{S_{n-r}})\right)] T_n^{f}(\mathbf{1})
\end{eqnarray*}
Now we use Lemma \ref{ind_res_tensor}, giving:
\[
\sum_{r=0}^{n} [\Ind{S_r \times S_{n-r}}{S_n}\left(\left((N^{\boxtimes r} \otimes \mathbf{1}_{S_r}) \boxtimes ((M/N)^{\boxtimes (n-r)} \otimes \mathbf{1}_{S_{n-r}})\right) \otimes \Res{S_r \times S_{n-r}}{S_n}\left( \sum_{\lambda \vdash n} \frac{\chi_{(n)}^{\lambda}}{n} (\mathbf{1}^{\boxtimes n} \otimes \mathcal{S}^\lambda) \right) \right)] 
\]
Note that the argument in Lemma \ref{resLemma} implies that all terms except for $r=0,n$ vanish (they involve the restriction of the indicator function of $n$-cycles to a proper Young subgroup of $S_n$). We get $T_n^{f}(M) = T_n^{f}(N) + T_n^{f}(M/N)$, and this immediately implies $T_n(M) = T_n(N) + T_n(M/N)$. Iterating this, we get one term for each composition factor of $U_1 \otimes U_2$. If $N_{U_1, U_2}^{U_3}$ is the structure tensor of the Grothendieck ring, then we have:
\[
[T_{n}(U_1), T_{n}(U_2)] = \sum_{U_3} (N_{U_1, U_2}^{U_3} - N_{U_2, U_1}^{U_3}) T_{n}(U_3)
\]
\end{proof}
\begin{remark} \label{lie_hom_rmk}
We may summarise these results by saying that the map from the Grothendieck ring of $\mathcal{C}$ (with coefficients in $\mathbb{Q}$) to the $\mathbb{Q}$-span of the $T_n(U)$ defined by $[U] \mapsto T_n(U)$ is a homomorphism of Lie algebras. The fact that $T_n(M) = T_n(N) + T_n(M/N)$ shows linearity, and we have just shown that it preserves the Lie bracket.
\end{remark}
\subsection{Structure of the Limiting Grothendieck Ring}
We are now able to give a presentation of $\mathcal{G}_{\infty}(\mathcal{C})$. Recall that if $A_i$ is an infinite family of unital algebras over $k$, then the the tensor product $\bigotimes_i A_i$ is spanned by pure tensors $a_1 \otimes a_2 \otimes \cdots $ ($a_i \in A_i$) whose factors are the unit elements in their respective algebras for all but finitely many $i$.
\begin{theorem}\label{strThm}
The $\mathbb{Q}$-algebra structure on $\mathcal{G}_{\infty}(\mathcal{C})$ is as follows:
\[
\mathcal{G}_{\infty}(\mathcal{C}) = \bigotimes_{i=1}^{\infty} U(\mathcal{G}(\mathcal{C})_i)
\]
Here, $U(\mathcal{G}(\mathcal{C})_i)$ is the universal enveloping algebra of the span of $T_{i}(U)$ for $U \in I(\mathcal{C})$ (this Lie algebra is contained within the $i$-th filtered component of $\mathcal{G}_{\infty}(\mathcal{C})$).
\end{theorem}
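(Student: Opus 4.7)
The plan is to construct an explicit isomorphism using the $T_n(U)$ generators and the universal property of the universal enveloping algebra, then apply the linear independence result from Proposition \ref{indepProp}.

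First I would define the map $\Phi \colon \bigotimes_{i=1}^{\infty} U(\mathcal{G}(\mathcal{C})_i) \to \mathcal{G}_{\infty}(\mathcal{C})$. For each fixed $i$, Theorem \ref{commThm} shows that the $\mathbb{Q}$-span of $\{T_i(U) \mid U \in I(\mathcal{C})\}$ inside $\mathcal{G}_{\infty}(\mathcal{C})$ is closed under the commutator bracket, with structure constants $N^{U_3}_{U_1,U_2} - N^{U_3}_{U_2,U_1}$; call this Lie algebra $\mathfrak{g}_i$, which is exactly $\mathcal{G}(\mathcal{C})_i$. By the universal property of $U(\mathfrak{g}_i)$, the inclusion $\mathfrak{g}_i \hookrightarrow \mathcal{G}_\infty(\mathcal{C})$ extends to a unital algebra homomorphism $\varphi_i \colon U(\mathcal{G}(\mathcal{C})_i) \to \mathcal{G}_\infty(\mathcal{C})$. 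Since Theorem \ref{commThm} also establishes that $[T_i(U_1), T_j(U_2)] = 0$ whenever $i \neq j$, the images of the $\varphi_i$ pairwise commute, so the $\varphi_i$ assemble into a well-defined algebra homomorphism $\Phi$ on the (restricted) tensor product, where almost all tensor factors are the unit.

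Next I would establish surjectivity. By Proposition \ref{indepProp}(3), the collection $\{T_n(U) \mid n \geq 1, \, U \in I(\mathcal{C})\}$ generates $\mathcal{G}_{\infty}(\mathcal{C})$ as an associative algebra; since every such $T_n(U)$ lies in the image of $\Phi$, the map is surjective.

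For injectivity, I would fix a total order on $\mathbb{Z}_{>0} \times I(\mathcal{C})$ and consider the associated PBW basis. Applying the Poincar\'e--Birkhoff--Witt theorem to each $U(\mathcal{G}(\mathcal{C})_i)$ and taking tensor products, a $\mathbb{Q}$-basis of $\bigotimes_i U(\mathcal{G}(\mathcal{C})_i)$ is given by ordered monomials in the generators $T_n(U)$. Under $\Phi$ these go to the corresponding ordered monomials in $\mathcal{G}_\infty(\mathcal{C})$, which are linearly independent by Proposition \ref{indepProp}(2). Hence $\Phi$ is injective, completing the isomorphism.

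The main obstacle has essentially been dispatched already: the substantive content is packaged into the commutator identity in Theorem \ref{commThm} (which supplies the Lie bracket compatible with $U(\mathcal{G}(\mathcal{C})_i)$) and into Proposition \ref{indepProp}(2) (the PBW-style linear independence in $\mathcal{G}_\infty(\mathcal{C})$, proved by passing to the associated graded where the $T_n(U)$ become $p_n^{(U)}/n$, which are algebraically independent). Once these are in hand, the remaining work is the routine packaging via the universal property and PBW; the only care needed is in handling the infinite tensor product correctly, namely observing that any element of $\mathcal{G}_\infty(\mathcal{C})$ involves only finitely many $X_{\vv\lambda}$, hence after expressing it as a polynomial in the $T_n(U)$ only finitely many $n$ occur, matching the restricted tensor product convention.
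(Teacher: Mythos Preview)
Your proposal is correct and follows essentially the same approach as the paper: define the map via the universal property (the paper phrases this via Remark \ref{lie_hom_rmk}, which is a summary of Theorem \ref{commThm}), obtain surjectivity from Proposition \ref{indepProp}(3), and obtain injectivity from the PBW/associated-graded argument of Proposition \ref{indepProp}(2). Your write-up is slightly more explicit than the paper's (spelling out the PBW basis and the handling of the restricted infinite tensor product), but the substance is identical.
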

\begin{proof}
We have a map that takes $[U] \in U(\mathcal{G}(\mathcal{C})_n)$ to $T_n(U)$. It is a homomorphism by Remark \ref{lie_hom_rmk}. It is a bijection by Proposition \ref{indepProp}; it is surjective because the $T_n(U)$ generate $\mathcal{G}_\infty(\mathcal{C})$ and it is injective because the map is an isomorphism upon taking associated graded algebras.
\end{proof}
\begin{remark}
The previous theorem generalises the result that $\underline{\mbox{Rep}}(\hopf \wr S_t)$ is the free polynomial algebra generated by basic hooks when $\hopf$ is cocommutative, as the universal enveloping algebra of an abelian Lie algebra is a free polynomial algebra.
\end{remark}

\section{Partition Combinatorics}
\noindent
We now focus on finding an expression for $X_{\vv{\lambda}}$ in terms of the $T_n(U)$.
\subsection{Irreducibles in Terms of $T_n(U)$}
\begin{lemma} \label{class_ind}
If $D_{(n)}$ denotes the class function on $S_n$ which is the indicator function of $n$-cycles, then  the class function $\Ind{S_n^{m}}{S_{mn}}(D_{(n)}^{\otimes m})$ is $m!$ times the indicator function of elements of cycle type $(n^m)$.
\end{lemma}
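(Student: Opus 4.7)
The plan is to invoke the standard formula for induction of class functions and then reduce the problem to a counting argument. Write $H = S_n^m \subset S_{mn} = G$, and apply
\[
\mathrm{Ind}_H^G(f)(g) = \frac{1}{|H|}\sum_{\substack{x \in G \\ x^{-1}gx \in H}} f(x^{-1}gx)
\]
with $f = D_{(n)}^{\otimes m}$. The function $D_{(n)}^{\otimes m}$ evaluates to $1$ on an element of $H$ precisely when each of its $m$ factors in the component $S_n$'s is an $n$-cycle, and to $0$ otherwise. So the only elements $x^{-1}gx \in H$ that contribute are products of $m$ disjoint $n$-cycles, and such elements have cycle type $(n^m)$ in $S_{mn}$.

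Next I would observe two consequences. First, the induced function is a class function, and it vanishes on any $g$ whose cycle type is not $(n^m)$, since conjugation preserves cycle type and $g$ cannot be conjugated into the subset of $H$ on which $f$ is supported. Second, fix $g$ of cycle type $(n^m)$; I need to count pairs $(x,h)$ with $x \in G$, $h \in H$ a tuple of $n$-cycles, and $x^{-1}gx = h$. Since every $h$ of this form has cycle type $(n^m)$ in $G$, it is $G$-conjugate to $g$, so for each such $h$ there are exactly $|C_G(g)|$ choices of $x$.

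Then I would count the number of valid $h$: choosing an $n$-cycle in each of the $m$ factors of $H$ gives $((n-1)!)^m$ elements. The centraliser order is $|C_G(g)| = z_{(n^m)} = n^m m!$, and $|H| = (n!)^m$. Plugging in:
\[
\mathrm{Ind}_H^G(D_{(n)}^{\otimes m})(g) = \frac{((n-1)!)^m \cdot n^m m!}{(n!)^m} = \frac{(n!)^m \cdot m!}{(n!)^m} = m!,
\]
which is the claimed value on cycle type $(n^m)$ and $0$ elsewhere. There is no real obstacle here; the only thing to be careful about is the identification $|C_G(g)| = z_{(n^m)}$ for $g$ of cycle type $(n^m)$, and the bookkeeping that every product of $n$-cycles in the $m$ component $S_n$'s is automatically a single $G$-conjugacy class equal to the class of $g$.
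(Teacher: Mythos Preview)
Your proof is correct and follows essentially the same approach as the paper: both invoke the Frobenius formula for induced class functions, observe that the support is forced to be the conjugacy class of cycle type $(n^m)$, and then compute the constant value on that class. The only difference is cosmetic: you carry out the count directly via $|C_G(g)|$ and the number $((n-1)!)^m$ of admissible $h$, whereas the paper packages the same count as the index $[N_G(H):H] = m!$ of $H = S_n^m$ in its normaliser $S_n \wr S_m$ in $S_{mn}$.
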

\begin{proof}
This must be some multiple of the indicator function of elements of cycle type $(n^m)$. The multiplicity can be found using the Frobenius character formula for induced representations. Take $H$ a subgroup of $G$, $G/H$ a collection of left coset representatives, then the induction of a character $\chi$ from $H$ to $G$ is given by:
\[
\Ind{H}{G}(\chi) (x) = \sum_{g \in G/H, g^{-1}xg \in H} \chi(g^{-1}xg)
\]
which demonstrates that the multiplicity is in fact the index of $S_n^m$ in its normaliser in $S_{nm}$. The normaliser is the wreath product $S_m \ltimes S_n^m$, hence the index is $m!$.
\end{proof}
\noindent
For now we will fix $m$, and consider relations between the $T_m(U)$.
\begin{definition}
If $\lambda$ is a partition of $n$, let
\[
T_{m,\lambda}(a_1, a_2, \ldots, a_n) = T_m(a_1 a_2 \cdots a_{\lambda_1})T_m(a_{\lambda_1 + 1} a_{\lambda_1 + 2} \cdots a_{\lambda_1 + \lambda_2}) \cdots T_m(a_{n - \lambda_{l(\lambda)}+1} a_{n - \lambda_{l(\lambda)}+1} \cdots a_{n})
\]
\end{definition}
\begin{proposition} \label{tdecomp}
We have the following identity in $\mathcal{G}_{\infty}(\mathcal{C})$.
\[
T_m(a_1, a_2, \ldots, a_n) = \sum_{\sigma \in S_n} \sum_{\lambda \vdash n} \frac{\varepsilon_\lambda}{z_\lambda} T_{m,\lambda}(a_{\sigma(1)}, a_{\sigma(2)}, \ldots, a_{\sigma(n)})
\]
\end{proposition}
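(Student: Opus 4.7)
The plan is to proceed by induction on $n$, with the trivial base case $n=1$ reading $T_m(a_1)=T_m(a_1)$. The main engine is a Mackey-theoretic multiplication rule:
\[
T_m(b)\cdot T_m(a_1,\ldots,a_n)=T_m(b,a_1,\ldots,a_n)+\sum_{i=1}^n T_m(a_1,\ldots,b\otimes a_i,\ldots,a_n).
\]
I would derive this by applying Proposition \ref{mackey_product} to the stable inductions defining the two factors: the $(S_m\times S_{N-m},\ S_m^n\times S_{N-mn})$-double cosets encode how the single $S_m$ attached to $b$ overlaps with each of the $n$ copies of $S_m$ attached to the $a_i$. By Lemma \ref{resLemma} all partial-overlap double cosets vanish, leaving only the cases of complete disjointness (which produce $T_m(b,a_1,\ldots,a_n)$) and of coincidence with exactly one copy (which by the idempotent identity $T_m^f(b)T_m^f(a_i)=T_m^f(b\otimes a_i)$ from the proof of Theorem \ref{commThm} produce $T_m(a_1,\ldots,b\otimes a_i,\ldots,a_n)$).

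Since $S_m^n$ is a product group, $T_m(a_1,\ldots,a_n)$ is symmetric in its arguments, so the above rule rearranges to the recursion
\[
T_m(a_1,\ldots,a_{n+1})=T_m(a_{n+1})\cdot T_m(a_1,\ldots,a_n)-\sum_{i=1}^n T_m(a_1,\ldots,a_{n+1}\otimes a_i,\ldots,a_n).
\]
Writing $R_n$ for the claimed right-hand side, the induction reduces to verifying the purely combinatorial identity
\[
R_{n+1}(a_1,\ldots,a_{n+1})=T_m(a_{n+1})\,R_n(a_1,\ldots,a_n)-\sum_{i=1}^n R_n(a_1,\ldots,a_{n+1}\otimes a_i,\ldots,a_n).
\]
I would do this by expanding $T_m(a_{n+1})R_n$ termwise via the multiplication rule: each term $T_{m,\lambda}(a_{\sigma(\cdot)})$ contributes either to $R_{n+1}$ (when $a_{n+1}$ becomes a new block, producing a partition $\lambda\cup(1)\vdash n+1$) or cancels a corresponding term in $\sum_i R_n(\ldots,a_{n+1}\otimes a_i,\ldots)$ (when $a_{n+1}$ is absorbed into an existing block, enlarging a part). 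The coefficient matching is exactly the Newton--Girard identity $(n+1)e_{n+1}=\sum_{k=1}^{n+1}(-1)^{k-1}p_k e_{n+1-k}$ applied to the expansion $e_n=\sum_{\lambda\vdash n}\tfrac{\varepsilon_\lambda}{z_\lambda}p_\lambda$.

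The main obstacle lies in the combinatorial bookkeeping in this last step. The outer sum over $\sigma\in S_n$ is essential for two reasons: it symmetrises the non-symmetric expression $\sum_\lambda\tfrac{\varepsilon_\lambda}{z_\lambda}T_{m,\lambda}$ to match the argument-symmetric $T_m(a_1,\ldots,a_n)$, and it accounts for the non-commutativity of $\otimes$ in $\mathcal{C}$ by producing every tensor ordering of the $a_i$ with the correct multiplicity. Tracking precisely how these orderings rearrange when multiplied by $T_m(a_{n+1})$ via the Mackey rule, and confirming that the weights $\varepsilon_\lambda/z_\lambda$ redistribute in accordance with Newton--Girard, is the delicate technical core of the argument.
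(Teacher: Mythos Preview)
Your approach and the paper's share the same starting ingredient---the Mackey multiplication rule
\[
T_m(b)\,T_m(a_1,\ldots,a_n)=T_m(b,a_1,\ldots,a_n)+\sum_{i=1}^n T_m(a_1,\ldots,ba_i,\ldots,a_n),
\]
derived exactly as you describe---but the combinatorial reductions diverge from there. The paper does \emph{not} argue by induction on $n$. Instead, it uses the multiplication rule iteratively to expand the entire right-hand side $\sum_\sigma\sum_\lambda \tfrac{\varepsilon_\lambda}{z_\lambda}T_{m,\lambda}(a_{\sigma(\cdot)})$ directly as a linear combination of terms $T_m(b_1,\ldots,b_k)$, where each $b_j$ is a product $a_{r_{j,1}}\cdots a_{r_{j,q_j}}$. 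The key observation is that the coefficient of such a term \emph{factorises} over the blocks: it equals $\prod_{j=1}^k\bigl(\sum_{\mu\vdash q_j}\varepsilon_\mu/z_\mu\bigr)$, and the inner sum is $\langle h_{q_j},e_{q_j}\rangle=\delta_{q_j,1}$. So every term with a nontrivial product dies, leaving only $T_m(a_1,\ldots,a_n)$ with coefficient $1$.

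Your inductive route via the recursion $R_{n+1}=T_m(a_{n+1})R_n-\sum_i R_n(\ldots,a_{n+1}\otimes a_i,\ldots)$ is in principle sound, and Newton--Girard is indeed the governing identity, but the bookkeeping you flag as ``delicate'' is genuinely heavier than in the paper's argument: you must track how multiplying $T_m(a_{n+1})$ into each $T_{m,\lambda}$-product either appends a part of size $1$ or enlarges an existing part, and match the resulting weights against both pieces of the recursion, all while handling the noncommutative word orderings produced by the $\sigma$-sum. The paper's factorisation trick bypasses this entirely: rather than checking a recursion, it kills all unwanted terms in one shot via the $h$--$e$ orthogonality. Your argument would work, but the paper's is shorter and structurally cleaner precisely because the coefficient decouples across blocks.
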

\noindent
Before the proof, we note a useful corollary.
\begin{corollary} \label{usefulCor}
In the case where $a_i = a$ for all $1 \leq i \leq n$, writing $\lambda = (1^{m_1}2^{m_2}\cdots n^{m_n})$ we obtain:
\[
T_m(a, a, \ldots, a) = \sum_{\lambda \vdash n} \frac{\varepsilon_\lambda n!}{z_\lambda} T_m(a)^{m_1}T_m(a^2)^{m_2} \cdots T_m(a^n)^{m_n}
\]
\end{corollary}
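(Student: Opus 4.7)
The plan is to specialise Proposition \ref{tdecomp} to the case $a_1 = a_2 = \cdots = a_n = a$ and then simplify the resulting sum. The key observation is that once all arguments are identified, the outer sum over $\sigma \in S_n$ becomes trivial and produces an overall factor of $n!$.

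First I would note that when $a_i = a$ for every $i$, the expression $T_{m,\lambda}(a_{\sigma(1)}, \ldots, a_{\sigma(n)})$ does not depend on $\sigma$: each consecutive block of $\lambda_j$ arguments is simply $(a, \ldots, a)$, so this expression equals $T_m(a^{\lambda_1}) T_m(a^{\lambda_2}) \cdots T_m(a^{\lambda_{l(\lambda)}})$. Summing over $\sigma \in S_n$ then contributes the factor $n!$, and Proposition \ref{tdecomp} specialises to
\[
T_m(a, a, \ldots, a) = \sum_{\lambda \vdash n} \frac{\varepsilon_\lambda n!}{z_\lambda} T_m(a^{\lambda_1}) T_m(a^{\lambda_2}) \cdots T_m(a^{\lambda_{l(\lambda)}}).
\]

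Next I would rewrite the product in terms of the multiplicities $m_j = m_j(\lambda)$. Writing $\lambda = (1^{m_1} 2^{m_2} \cdots n^{m_n})$, the factor $T_m(a^j)$ appears exactly $m_j$ times, so up to a reordering of the factors the product equals $T_m(a)^{m_1} T_m(a^2)^{m_2} \cdots T_m(a^n)^{m_n}$. To justify this reordering I would invoke Theorem \ref{commThm}: the commutator $[T_m(a^i), T_m(a^j)]$ equals $\sum_{U_3} (N_{a^i, a^j}^{U_3} - N_{a^j, a^i}^{U_3}) T_m(U_3)$, and since $[a^i][a^j] = [a]^{i+j} = [a^j][a^i]$ in $\mathcal{G}(\mathcal{C})$ the structure constants agree and the commutator vanishes. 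Substituting the reordered product and summing over $\lambda$ yields the claimed identity.

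The only step requiring any care is the reordering of factors, since $\mathcal{G}_\infty(\mathcal{C})$ is not commutative in general; however this is harmless in the present setting because the needed commutativity is confined to the subalgebra generated by the $T_m(a^j)$, where tensor powers of a single object commute in the Grothendieck ring of $\mathcal{C}$ and Theorem \ref{commThm} forces the relation. Everything else in the argument is routine bookkeeping with the counts $\varepsilon_\lambda$ and $z_\lambda$.
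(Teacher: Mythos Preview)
Your proof is correct and takes essentially the same approach as the paper: the paper simply records this corollary as an immediate specialisation of Proposition \ref{tdecomp}, without a separate argument. Your extra step of justifying the reordering of the factors $T_m(a^{\lambda_1})T_m(a^{\lambda_2})\cdots$ into $T_m(a)^{m_1}T_m(a^2)^{m_2}\cdots$ via Theorem \ref{commThm} is a genuine point the paper leaves implicit, and your justification (that $[a^i][a^j]=[a]^{i+j}=[a^j][a^i]$ in $\mathcal{G}(\mathcal{C})$, so the commutator vanishes by the Lie-homomorphism property of $T_m(-)$) is exactly right.
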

\begin{proof}
We use Mackey theory to calculate $T_m(b)T_m(a_1, a_2, \ldots, a_n)$. This can be understood by taking the tensor product of 
\[
\Ind{S_m \times S_{(n-1)m +k}}{S_{nm+k}} (T_m^{f}(b) \boxtimes (\mathbf{1}^{\boxtimes ((n-1)m+k)} \otimes \mathbf{1}_{S_{(n-1)m+k}}))
\]
and
\[
\Ind{S_m^n \times S_k}{S_{mn+k}}(T_m^f(a_1) \boxtimes T_n^f(a_2) \boxtimes \cdots \boxtimes T_n^f(a_n) \boxtimes (\mathbf{1}^{\boxtimes k} \otimes \mathbf{1}_{S_k})
\]
The first step is to understand double-coset representatives. The minimal length $(S_m \times S_{(n-1)m+k}, S_m^n\times S_k)$-double coset representatives either map the elements of $\{1,2, \ldots, m\}$ to a contiguous block of $m$ elements permuted by a single component group in $S_m^n\times S_k$, or the elements are split between such component groups. In the latter case, the corresponding terms (in the Mackey theory computation) will involve a nontrivial restriction of a $T_m^f$ to a Young subgroup, as in the proof of Theorem \ref{commThm}, a nontrivial restriction is zero. Thus, we consider the ways to pick a copy of $S_m$ as one of the $n$ given ones, or one contained in $S_k$. Analogously to Theorem \ref{commThm}, each gives rise to a term where the arguments multiply:
\[
T_m(b)T_m(a_1, a_2, \ldots, a_n) = T_m(b, a_1, a_2, \ldots, a_n) + \sum_{i=1}^n T_m(a_1, \ldots a_{i-1}, ba_i, a_{i+1}, \ldots, a_n)
\]
Using this equation, we may decompose the claimed expression for $T_m(a_1, a_2, \ldots, a_n)$ into a linear combination of $T_m(b_1, b_2, \ldots, b_m)$, where each $b_j$ is a product of $a_i$s. We count the coefficient of a term of the following form in the expression on the right hand side of the claimed equality:
\[
T_m(a_{r_{1,1}}a_{r_{1,2}}\cdots a_{r_{1, q_1}}, \ldots, a_{r_{k,1}}a_{r_{k,2}}\cdots a_{r_{k, q_k}})
\]
Here, the $r_{i,j}$ for $1 \leq i \leq k$, $1 \leq j \leq q_i$ are exactly the numbers $1$ through $n$ in some order.
\newline \newline \noindent
The argument $a_{r_{1,1}}a_{r_{1,2}}\cdots a_{r_{1, q_1}}$ must arise from a product of terms such as the following:
\[
T_m(a_{r_{1,1}}a_{r_{1,2}}\cdots a_{r_{1, \mu_1}})
T_m(a_{r_{1,\mu_1+1}}a_{r_{1,\mu_1+2}}\cdots a_{r_{1, \mu_1+\mu_2}})
\cdots
T_m(a_{r_{1,q_1 - \mu_l+1}}a_{r_{1,2}}\cdots a_{r_{1, q_k}})
\]
Since all terms $T_m(x)$ in the definition of $T_{m,\lambda}$ occur in non-increasing order of the number of factors in the argument $x$, we obtain a partition of $q_1$, $\mu^{(1)} = (\mu_1, \mu_2, \ldots, \mu_l)$, associated to the sequence $r_{1, j}$ which describes the factors $T_m(x)$ contributing to that term. A similar description holds for the other $r_{i,j}$ for other values of $i$. We obtain a description of all contributions; note that the $\lambda$ appearing in the sum will be the union of all $\mu^{(i)}$, and that if multiple $\mu^{(i)}$ have parts of some size $s$, then there is no restriction on the ordering of the corresponding $T_m(b_1 b_2\cdots b_s)$ terms within $T_{m,\lambda}$ (each possible ordering has an equal contribution). The coefficient of $T_m(a_{r_{1,1}}a_{r_{1,2}}\cdots a_{r_{1, q_1}}, \ldots, a_{r_{k,1}}a_{r_{k,2}}\cdots a_{r_{k, q_k}})$ is:
\[
\sum_{\mu^{(1)} \vdash q_1}
\sum_{\mu^{(2)} \vdash q_2}
\cdots
\sum_{\mu^{(k)} \vdash q_k}
\frac{\varepsilon_{{\cup_i \mu^{(i)}}}}{z_{\cup_i \mu^{(i)}}}
\prod_{j=1}^{n} \frac{\left(\sum_{i=1}^{k} m_j(\mu^{(i)})\right)!}{\prod_{i=1}^{k} m_j(\mu^{(i)})!}
\]
Here the multinomial coefficient arose because different orderings of factors can give rise to the same term. Using the fact that $\varepsilon_{\mu \cup \nu} = \varepsilon_\mu \varepsilon_\nu$ and the definition of $z_\mu$, our equation becomes:
\[
\sum_{\mu^{(1)} \vdash q_1}
\sum_{\mu^{(2)} \vdash q_2}
\cdots
\sum_{\mu^{(k)} \vdash q_k}
\prod_{i=1}^{k} \frac{\varepsilon_{{\mu^{(i)}}}}{z_{\mu^{(i)}}}
=
\prod_{i=1}^{k} \left(\sum_{\mu^{(i)} \vdash q_i} \frac{\varepsilon_{\mu^{(i)}} }{z_{\mu^{(i)}}}\right)
\]
The expansions for elementary and complete symmetric functions in terms of power sum symmetric functions show that we have:
\[
\delta_{n, 1} = \langle s_{(n)}, s_{(1^n)} \rangle=\langle h_n, e_n \rangle = \sum_{\lambda \vdash n} \langle \frac{p_\lambda}{z_\lambda}, \frac{\varepsilon_{\lambda}p_\lambda}{z_\lambda} \rangle=  \sum_{\lambda \vdash n} \frac{\varepsilon_{\lambda} }{z_{\lambda}}
\]
This means that the expression of interest vanishes unless each $q_i = 1$. In that case the constant is 1, and we simply obtain $T(a_1, a_2, \ldots, a_n)$ as claimed.
\end{proof}

\begin{proposition} \label{semiirred}
We may relate $U^{\boxtimes |\lambda|} \otimes \mathcal{S}^{\lambda}$ to the $T_i(U)$ as follows.
\begin{eqnarray*}
& &\lim_{m \to \infty} \Ind{S_{|\lambda|} \times S_m}{S_{|\lambda|+m}} \left(\left( U^{\boxtimes |\lambda|}\otimes \mathcal{S}^{\lambda}\right) \boxtimes \left(\mathbf{1}^{\boxtimes m} \otimes \mathbf{1}_{S_m}\right)\right) \\
&=& \sum_{\mu \vdash |\lambda|} \chi_{\mu}^{\lambda} \frac{T_1(\overbrace{U, U, \ldots, U}^{m_1(\mu)})}{m_1(\mu)!} \frac{T_2(\overbrace{U, U, \ldots, U}^{m_2(\mu)})}{m_2(\mu)!} \cdots \frac{T_{|\lambda|}(\overbrace{U, U, \ldots, U}^{m_{|\lambda|}(\mu)})}{m_{|\lambda|}(\mu)!}
\end{eqnarray*}
\end{proposition}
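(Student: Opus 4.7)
The strategy is to expand $\mathcal{S}^\lambda$ in the basis of conjugacy-class indicators, realize each indicator as an induction via Lemma \ref{class_ind}, tensor with $U^{\boxtimes |\lambda|}$, and then push the resulting identity through the limiting induction to $\mathcal{G}_\infty(\mathcal{C})$.

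First, column orthogonality of characters gives $\chi^\lambda = \sum_{\mu \vdash |\lambda|} \chi_\mu^\lambda \mathbf{1}_{C_\mu}$, where $\mathbf{1}_{C_\mu}$ is the indicator of the conjugacy class of cycle type $\mu$ in $S_{|\lambda|}$. Writing $V_\mu$ for the virtual $S_{|\lambda|}$-representation with this character, the same identity lifts to the rational Grothendieck group as $[\mathcal{S}^\lambda] = \sum_\mu \chi_\mu^\lambda [V_\mu]$, so in $\mathbb{Q} \otimes_\mathbb{Z} \mathcal{G}(\Wreath{|\lambda|})$ we obtain $[U^{\boxtimes |\lambda|} \otimes \mathcal{S}^\lambda] = \sum_\mu \chi_\mu^\lambda [U^{\boxtimes |\lambda|} \otimes V_\mu]$.

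Second, iterating Lemma \ref{class_ind} across the cycle-type blocks of $\mu$ and using transitivity of induction yields the class-function identity
\[
\mathbf{1}_{C_\mu} = \frac{1}{\prod_i m_i(\mu)!}\, \Ind{\prod_i S_i^{m_i(\mu)}}{S_{|\lambda|}}\left( \bigotimes_i D_{(i)}^{\otimes m_i(\mu)} \right),
\]
and hence the analogous identity for the virtual representations $V_\mu$. A column-orthogonality computation identifies the virtual $S_i$-representation with character $D_{(i)}$ as $\tfrac{1}{i}\sum_{\lambda \vdash i}\chi_{(i)}^\lambda \mathcal{S}^\lambda$, so that $U^{\boxtimes i} \otimes V_{(i)} = T_i^f(U)$. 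Pushing the factor $U^{\boxtimes |\lambda|}$ through the induction via Proposition \ref{Wprop2}(1) gives
\[
[U^{\boxtimes |\lambda|} \otimes V_\mu] = \frac{1}{\prod_i m_i(\mu)!}\, \Ind{\prod_i S_i^{m_i(\mu)}}{S_{|\lambda|}}\left( \boxtimes_i T_i^f(U)^{\boxtimes m_i(\mu)} \right).
\]

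Third, apply $\lim_{m \to \infty}[\Ind{S_{|\lambda|} \times S_m}{S_{|\lambda|+m}}](- \boxtimes [\mathbf{1}^{\boxtimes m} \otimes \mathbf{1}_{S_m}])$ to both sides and collapse the nested inductions using transitivity. What remains is to identify
\[
\lim_{m \to \infty}[\Ind{\prod_i S_i^{m_i(\mu)} \times S_m}{S_{|\lambda|+m}}]\left( \boxtimes_i T_i^f(U)^{\boxtimes m_i(\mu)} \boxtimes [\mathbf{1}^{\boxtimes m} \otimes \mathbf{1}_{S_m}] \right)
\]
with the product $\prod_i T_i(\underbrace{U,\ldots,U}_{m_i(\mu)})$ in $\mathcal{G}_\infty(\mathcal{C})$. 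This is the main obstacle, because the definition of $T_n(M_1, \ldots, M_k)$ only treats the case where every factor comes from the same $S_n$, whereas the subgroup $\prod_i S_i^{m_i(\mu)}$ mixes block sizes. The verification is a Mackey-theoretic argument directly parallel to the proofs of Theorem \ref{commThm} and Proposition \ref{tdecomp}: computing the product $\prod_i T_i(U,\ldots,U)$ inside $\mathcal{G}(\Wreath{n})$ for $n$ large via repeated application of Proposition \ref{mackey_product}, every double coset representative except the one that keeps the blocks fully disjoint forces a restriction of some $T_i^f(U)$ to a proper Young subgroup of $S_i$, and such restrictions vanish by Lemma \ref{resLemma}. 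The unique surviving Mackey summand is precisely the iterated induction above, yielding the claimed formula.
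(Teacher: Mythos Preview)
Your proof is correct and follows essentially the same route as the paper's: decompose $\mathcal{S}^\lambda$ into cycle-type indicators with coefficients $\chi_\mu^\lambda$, realise each indicator via Lemma~\ref{class_ind}, and match with the right-hand side. The paper's proof is very terse at the final step --- it simply asserts that multiplying the $\tfrac{T_i(U,\ldots,U)}{m_i(\mu)!}$ together ``corresponds to taking the tensor product of class functions on $S_{im_i(\mu)}$ and inducing up to $S_n$'' --- whereas you correctly flag this as the substantive point and sketch the Mackey argument (all non-disjoint double cosets force a proper restriction of some $T_i^f(U)$, which vanishes by Lemma~\ref{resLemma}). This is exactly the mechanism already used in the proofs of Theorem~\ref{commThm} and Proposition~\ref{tdecomp}, which is presumably why the paper feels free to omit it here; your version makes the dependence explicit.
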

\begin{proof}
We decompose $\mathcal{S}^\lambda$ into a linear combination of virtual representations, each having character equal to an indicator function of some cycle type $\mu$. The scalar multiples are $\chi_\mu^\lambda$. By Lemma \ref{class_ind}, $\frac{T_i(U, U, \ldots, U)}{m_i(\mu)!}$ corresponds to the indicator function of cycle type $(i^{m_i(\mu)})$. Multiplying these together corresponds to taking the tensor product of class functions on $S_{im_i(\mu)}$ and inducing up to $S_n$, which precisely gives the indicator function of cycle type $\mu$.
\end{proof}

\begin{remark} \label{adjointRmk}
Suppose that we wish to decompose the expression in Proposition \ref{semiirred} into $X_\mu$. It is clear that if $U \neq \mathbf{1}$ then we get the definition of $X_{\vv{\mu}}$ where $\vv{\mu}(U) = \lambda$ and $\vv{\mu}(V)$ is the trivial partition for $V \neq U$. If $U = \mathbf{1}$, we use Proposition \ref{Wprop2} to see that we must describe $\Ind{S_{|\lambda|} \times S_m}{S_{|\lambda|+m}} (\mathcal{S}^{\lambda} \boxtimes \mathbf{1}_{S_m})$ for $m$ sufficiently large. Under (the inverse of) the characteristic map between symmetric functions and representations of symmetric groups, calculating the induced representation amounts to calculating the product of symmetric functions $s_\lambda h_m$, which is described combinatorially by the Pieri rule. The Pieri rule states that we get $\sum_\mu s_\mu$ where the sum across all partitions $\mu$ obtained from $\lambda$ by adding $m$ boxes, no two in the same column. For example, suppose $\lambda = (2,1)$ and $m = 4$. The valid $\mu$ are shown below, where the added boxes are highlighted.

\begin{figure}[H]
\centering
\ytableausetup{nosmalltableaux}
\begin{ytableau}
*(white) &*(white) &*(gray) &*(gray) & *(gray) &*(gray) \\
*(white)
\end{ytableau}
\hspace{10mm}
\begin{ytableau}
*(white) &*(white) &*(gray) &*(gray) & *(gray) \\
*(white) & *(gray)
\end{ytableau}
\hspace{10mm}
\begin{ytableau}
*(white) &*(white) &*(gray) &*(gray) & *(gray) \\
*(white) \\
*(gray)
\end{ytableau}
\hspace{10mm}
\begin{ytableau}
*(white) &*(white) &*(gray) &*(gray)  \\
*(white) & *(gray) \\
*(gray)
\end{ytableau}
\end{figure}

\noindent
When $m$ is larger than the number of columns in the diagram of $\lambda$ (i.e. the longest part of $\lambda$), there is no restriction on the collection of columns that a box may be added to, save that the final result must be a partition.
We are interested in the set of partitions obtained by removing the first row of each of the diagrams after performing the above operation. The operation of removing the top row is the same as removing a box from each column. Thus, we are interested in all partitions obtained by adding at most one box to some column, and then removing one box from each column. This is equivalent to removing one box from each of the columns in the diagram of $\lambda$ that were not chosen. In other words, the set we are interested in consists of all partitions obtained from $\lambda$ by removing some number of boxes, no two in the same column. If we write $h_r^{\perp}$ for the operator adjoint to multiplication by $h_r$ with respect to the usual bilinear form form on $\Lambda$, then by the Pieri rule, $h_r^{\perp}s_\lambda$ is $\sum_\mu s_\mu$ across all partitions $\mu$ obtained from $\lambda$ by removing $r$ boxes in the diagram of $\lambda$, no two in the same column. Continuing to encode partitions as their associated Schur functions, we find that the desired decomposition is
\[
\left( \sum_{r=0}^{\infty} h_r^{\perp} \right) s_{\lambda}
\]
This is because, for $m$ sufficiently large, there is no restriction on the number of boxes that could be removed (i.e. $r=0,1,2,\cdots$).
\end{remark}
\begin{example} \label{trivexamp}
Suppose that in Proposition \ref{semiirred}, $\lambda = (1^r)$ and $U = \mathbf{1}$. Since $h_i^{\perp}s_{(1^r)} = 0$ for $i \geq 2$, and $h_1^{\perp}s_{(1^r)} = s_{(1^{r-1})}$ (and $h_0^\perp s_{(1^r)} = s_{(1^r)}$), we obtain
\[
\lim_{m \to \infty} \Ind{S_{r} \times S_m}{S_{r+m}} \left(\left( \mathbf{1}^{\boxtimes r}\otimes \mathcal{S}^{(1^r)}\right) \boxtimes \left(\mathbf{1}^{\boxtimes m} \otimes \mathbf{1}_{S_m}\right)\right) = X_{\vv{\mu}^{(1)}} + X_{\vv{\mu}^{(2)}}
\]
where $\vv{\mu}^{(1)}(\mathbf{1}) = (1^r)$ and $\vv{\mu}^{(2)}(\mathbf{1}) = (1^{r-1})$ (and all parts of these multipartitions associated to $U \neq \mathbf{1}$ are the empty partition).
\end{example}

\begin{remark}\label{adjointRmk2}
To reconstruct $X_{\vv{\lambda}}$ from the objects in Proposition \ref{semiirred}, we need to invert the operator $\sum_{r=0}^{\infty} h_r^{\perp}$. Recognising it as the adjoint of $H(1)$ (where $H(t) = h_0 + h_1 t+ h_2 t^2 + \cdots$ is the generating function of complete symmetric functions), we may write the inverse as the adjoint of $E(-1)$ (recall that $H(t)E(-t) = 1$). So, the relevant operator (when we are encoding partitions as Schur functions) is $\sum_{r=0}^{\infty} (-1)^re_{r}^{\perp}$ (where $e_r^\perp$ is adjoint to multiplication by $e_r$).
\end{remark}

\begin{proposition} \label{PtoT}
Let $U$ be an object of $\mathcal{C}$, and $\varphi_U: \Lambda \otimes_{\mathbb{Z}}\mathbb{Q} \to \mathcal{G}_\infty(\mathcal{C})$ be defined by 
\[
\varphi_U(e_i) = \lim_{m \to \infty} \Ind{S_{i} \times S_m}{S_{i+m}} \left(\left(U^{\boxtimes i} \otimes \mathcal{S}^{(1^i)}\right) \boxtimes \left(\mathbf{1}^{\boxtimes m} \otimes \mathbf{1}_{S_m}\right)\right)
\]
so that if $U \neq \mathbf{1}$ is a simple object, then $\varphi_U(e_i)$ is the basic hook $\overline{e}_i(U)$, whilst Example \ref{trivexamp} shows that if $U = \mathbf{1}$, then $\varphi(e_i) = \overline{e}_i(\mathbf{1}) + \overline{e}_{i-1}(\mathbf{1})$. 
Then:
\[
\varphi_U(p_n) = \sum_{d | n} d T_d(U^{\frac{n}{d}})
\]
\end{proposition}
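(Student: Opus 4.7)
The plan is to establish the identity via a generating-function comparison. Since $\Lambda_\mathbb{Q} = \mathbb{Q}[e_1, e_2, \ldots]$ is a polynomial ring, $\varphi_U$ is unambiguously determined as an algebra homomorphism provided the images $\varphi_U(e_i)$ pairwise commute. Applying Proposition \ref{semiirred} with $\lambda = (1^i)$ (so that $\chi_\mu^{(1^i)} = \varepsilon_\mu$) and then Corollary \ref{usefulCor} expresses each $\varphi_U(e_i)$ as a polynomial in the elements $T_j(U^a)$ for various $j, a \geq 1$. The commutativity of any two such elements follows from Theorem \ref{commThm}: the case $j \neq j'$ is immediate, while for $j = j'$ the structure constants $N_{U^a, U^b}^W$ are symmetric in $a, b$ since $U^a \otimes U^b \cong U^{a+b} \cong U^b \otimes U^a$, making the commutator expression in Theorem \ref{commThm} vanish.

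Next, I compute the generating function $\varphi_U(E(-t)) = \sum_{i \geq 0} \varphi_U(e_i)(-t)^i$. Substituting the formula from Proposition \ref{semiirred} and reindexing the sum over $\mu$ by the multiplicities $(m_1, m_2, \ldots)$, the sign bookkeeping collapses (one checks $(-1)^{jm_j} \varepsilon_{(j^{m_j})} = (-1)^{m_j}$ per index $j$) and, using the commutativity just established, the whole expression factorizes as
\[
\varphi_U(E(-t)) = \prod_{j \geq 1} \sum_{m \geq 0} \frac{(-t^j)^m \, T_j(\overbrace{U, \ldots, U}^{m})}{m!}.
\]
Expanding each $T_j(U, \ldots, U)$ via Corollary \ref{usefulCor}, the $j$-th inner sum is precisely the evaluation of the classical identity $E(s) = \sum_\nu \varepsilon_\nu s^{|\nu|} p_\nu / z_\nu$ at $s = -t^j$ under the specialization $p_i \mapsto T_j(U^i)$. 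Invoking the exponential form $E(s) = \exp\bigl(\sum_{i \geq 1}(-1)^{i+1} p_i s^i / i\bigr)$, each factor becomes $\exp\bigl(-\sum_{i \geq 1} t^{ji} T_j(U^i)/i\bigr)$, and combining over $j$ (again by commutativity) gives
\[
\varphi_U(E(-t)) = \exp\left( - \sum_{j, i \geq 1} \frac{t^{ji}\, T_j(U^i)}{i} \right).
\]

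On the other hand, applying the algebra homomorphism $\varphi_U$ to the classical identity $E(-t) = \exp\bigl(-\sum_{k \geq 1} p_k t^k/k\bigr)$ gives $\varphi_U(E(-t)) = \exp\bigl(-\sum_{k \geq 1} \varphi_U(p_k) t^k / k\bigr)$. Matching this with the display above and comparing coefficients of $t^n$ yields $\varphi_U(p_n)/n = \sum_{de=n} T_d(U^e)/e$, which rearranges to $\varphi_U(p_n) = \sum_{d \mid n} d\, T_d(U^{n/d})$, as claimed.

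The main technical obstacle is the commutativity verification: both the factorization of the generating function over $j$ and the passage from a product of exponentials to an exponential of a sum require that all the $T_j(U^i)$ commute with one another, which is where the special structure of tensor powers of the \emph{single} object $U$ is essential (an analogous assertion for $\varphi_{U,V}$ combining two distinct simples would not hold in general when $\mathcal{G}(\mathcal{C})$ is noncommutative). Once this commutativity is secured, the remainder reduces to standard manipulations with power-sum/elementary generating functions and careful sign tracking.
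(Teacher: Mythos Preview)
Your proof is correct and follows essentially the same route as the paper: both compute the generating function $\varphi_U(E(\pm t))$ by combining Proposition~\ref{semiirred} with Corollary~\ref{usefulCor}, arrive at an exponential $\exp\bigl(\pm\sum_{i,j} t^{ij} T_i(U^j)/j\bigr)$ (up to sign conventions), and then read off $\varphi_U(p_n)$. The only cosmetic difference is that the paper takes the logarithmic derivative $E'(t)/E(t) = P(-t)$ to extract the coefficients, whereas you compare the exponents directly; your explicit verification that the $T_j(U^a)$ pairwise commute is a point the paper uses silently but does not spell out.
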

\begin{proof}
We use the generating functions $E(t) = \sum_{n=0}^{\infty} e_n t^n$, and $P(t) = \sum_{n=0}^{\infty} p_{n+1}t^n$. Recall that
\[
\frac{d}{dt} \log(E(t)) = P(-t)
\]
Additionally, we have the following expression using Proposition \ref{semiirred}, following directly from the character formula for the sign representation ($\chi_{\mu}^{(1^{|\mu|})} = \varepsilon_\mu$).
\begin{eqnarray*}
\varphi_U(e_n) &=& \sum_{\lambda \vdash n} {\varepsilon_\lambda} \frac{T_1(\overbrace{U, U, \ldots, U}^{m_1(\lambda)})}{m_1(\lambda)!} \frac{T_2(\overbrace{U, U, \ldots, U}^{m_2(\lambda)})}{m_2(\lambda)!} \cdots \frac{T_{n}(\overbrace{U, U, \ldots, U}^{m_{n}(\lambda)})}{m_{n}(\lambda)!} \\
&=&
\sum_{\lambda \vdash n} \prod_{i=1}^{n} \frac{(-1)^{m_i(\lambda)(i-1)}}{m_i(\lambda)!} T_i(\overbrace{U,U,\ldots,U}^{m_i(\lambda)}) \\
&=&
\sum_{\lambda \vdash n} \prod_{i=1}^{n} {(-1)^{m_i(\lambda)(i-1)}} \sum_{\mu^{(i)} \vdash m_i(\lambda)} \frac{\varepsilon_{\mu^{(i)}}}{z_{\mu^{(i)}}}\prod_j T_i(U^j)^{m_j(\mu^{(i)})}
\end{eqnarray*}
In the last step, we used Corollary \ref{usefulCor}. We now calculate the generating function $E(t)$. Below we abbreviate $m_i(\lambda)$ to $m_i$, and use the fact that $\lambda$ is parametrised by the numbers $m_i$.
\begin{eqnarray*}
\varphi_U(E(t)) &=& \sum_{n=0}^{\infty}
\sum_{\lambda \vdash n} t^{|\lambda|}\prod_{i=1}^{n} {(-1)^{m_i(\lambda)(i-1)}} \sum_{\mu^{(i)} \vdash m_i(\lambda)} \frac{\varepsilon_{\mu^{(i)}}}{z_{\mu^{(i)}}} \prod_j T_i(U^j)^{m_j(\mu^{(i)})}
 \\
&=& \sum_{m_1 = 0}^{\infty} \sum_{m_2 = 0}^{\infty} \cdots 
\left( 
\prod_{i=1}^{\infty} t^{i m_i }(-1)^{(i-1)m_i} \sum_{\mu^{(i)} \vdash m_i} \frac{\varepsilon_{\mu^{(i)}}}{z_{\mu^{(i)}}} \prod_j T_i(U^j)^{m_j(\mu^{(i)})}
\right) \\
&=&
\prod_{i =1}^{\infty}
 \sum_{m_i = 0}^{\infty}
\left( 
t^{i m_i }(-1)^{(i-1)m_i} \sum_{\mu^{(i)} \vdash m_i} \frac{\varepsilon_{\mu^{(i)}}}{z_{\mu^{(i)}}} \prod_j T_i(U^j)^{m_j(\mu^{(i)})}
\right) \\
&=&
\prod_{i =1}^{\infty}
 \sum_{m_i = 0}^{\infty}
\left( 
t^{i m_i }(-1)^{(i-1)m_i} \sum_{\mu^{(i)} \vdash m_i}  \prod_j \frac{T_i(U^j)^{m_j(\mu^{(i)})}(-1)^{m_j(\mu^{(i)})(j-1)}}{m_j(\mu^{(i)})!j^{m_j(\mu^{(i)})}}
\right) \\
&=&
\prod_{i =1}^{\infty}
\left( 
 \sum_{m_i = 0}^{\infty}
 \sum_{\mu^{(i)} \vdash m_i}  
 \prod_{j=1}^{\infty}
t^{i jm_j(\mu^{(i)}) }(-1)^{(i-1)jm_j(\mu^{(i)})} \frac{T_i(U^j)^{m_j(\mu^{(i)})}(-1)^{m_j(\mu^{(i)})(j-1)}}{m_j(\mu^{(i)})!j^{m_j(\mu^{(i)})}}
\right) \\
&=&
\prod_{i =1}^{\infty}
\left( 
 \sum_{m_i = 0}^{\infty}
 \sum_{\mu^{(i)} \vdash m_i} 
 \prod_{j=1}^{\infty}
\frac{1}{m_j(\mu^{(i)})!} \left(t^{i j }(-1)^{(ij-1)} \frac{T_i(U^j)}{j}\right)^{m_j(\mu^{(i)})}
\right) \\
&=&
\prod_{i =1}^{\infty}
\left( 
 \sum_{\mu^{(i)} \in \mathcal{P}} 
 \prod_{j=1}^{\infty}
\frac{1}{m_j(\mu^{(i)})!} \left(t^{i j }(-1)^{(ij-1)} \frac{T_i(U^j)}{j}\right)^{m_j(\mu^{(i)})}
\right) \\
&=&
\prod_{i =1}^{\infty}
\left( 
\exp\left(
\sum_{j=1}^{\infty}
t^{i j }(-1)^{(ij-1)} \frac{T_i(U^j)}{j}\right)
\right) 
\end{eqnarray*}
In the second last step, we used the fact that the set of integer partitions $\mu$ is parametrised by the numbers $m_j(\mu) \in \mathbb{Z}_{\geq 0}$ with all but finitely many being zero. This allows us to sum over the numbers $m_j(\mu^{(i)})$ independently of each other.
Now we may take the derivative of the logarithm with respect to $t$:
\begin{eqnarray*}
\varphi_U\left(\frac{E'(t)}{E(t)}\right) &=& \sum_{i=1}^{\infty}\sum_{j=1}^{\infty} ij t^{i j - 1 }(-1)^{(ij-1)} \frac{T_i(U^j)}{j} \\
&=& \sum_{i=1}^{\infty}\sum_{j=1}^{\infty} i (-t)^{ij-1} T_i(U^j) \\
&=& \sum_{n=1}^{\infty} \sum_{d | n} d (-t)^{n-1} T_d(U^{n/d})
\end{eqnarray*}
In the last step, the change of variables $i = d$, $ij = n$ was used. Equating the terms of the power series with those of $\varphi_U(P(-t))$ gives the result.
\end{proof}
\subsection{Generating Function for Irreducibles}
\noindent
We now prove the main theorem of this paper. It provides an generating function for the basis $X_{\vv{\lambda}}$ in terms of the $T_n(U)$ generators. In principle this gives a way to decompose products $X_{\vv{\mu}}X_{\vv{\nu}}$, and therefore a way to calculate multiplicities of tensor products in wreath product Deligne categories (this calculation is carried out in Section \ref{calc_section}, and Theorem \ref{coefthm} in particular).
\begin{theorem} \label{bigthm}
Write $\Lambda_{\mathbb{Q}}^{(U)}$ for a copy of the ring of symmetric functions with rational coefficients, whose variables we associate with $U \in I(\mathcal{C})$. If $f$ is a symmetric function, we write $f^{(U)}$ to denote $f$ considered as an element of $\Lambda_{\mathbb{Q}}^{(U)}$. We work in $\left(\bigotimes_{U \in I(\mathcal{C})} \Lambda_{\mathbb{Q}}^U\right) \hat{\otimes} \mathcal{G}(\mathcal{C})$, the completed tensor product of $\left(\bigotimes_{U \in I(\mathcal{C})} \Lambda_{\mathbb{Q}}^U\right)$ with $\mathcal{G}(\mathcal{C})$. If $c_\mu^{(U)}$ ($\mu \in \mathcal{P}$, $U \in I(\mathcal{C})$) are constants, let
\[
T_l\left(\sum_{\mu \in \mathcal{P}, U \in I(\mathcal{C})} c_\mu^{(U)} p_\mu^{(U)} [U]\right) = \sum_{\mu \in \mathcal{P}, U \in I(\mathcal{C})} c_\mu^{(U)} p_\mu^{(U)} \otimes T_l(U)
\]
which is an element of $\left(\bigotimes_{U \in I(\mathcal{C})} \Lambda_{\mathbb{Q}}^U\right) \hat{\otimes} \mathcal{G}_\infty(\mathcal{C})$. We have the following equality:
\[
\sum_{\vv{\lambda} \in \mathcal{P}^\mathcal{C}} \left( \prod_{U \in I(\mathcal{C})} s_{\vv{\lambda}(U)}^{(U)} \right) \otimes X_{\vv{\lambda}} =
\left(\sum_{r \geq 0} (-1)^r e_r^{(\mathbf{1})} \right)
\prod_{l=1}^{\infty}\exp \left(T_l \left(\log \left(1 + \sum_{U \in I(\mathcal{C})} p_l^{(U)}[U]\right) \right) \right)
\]
\end{theorem}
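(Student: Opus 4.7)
The plan is to introduce a stabilized-induction intermediate $Y_{\vv{\lambda}}$, relate it to $X_{\vv{\lambda}}$ via an adjoint Pieri operator (which produces the prefactor $\sum_r (-1)^r e_r^{(\mathbf{1})}$), and then identify the generating function of the $Y_{\vv{\lambda}}$ with the exponential product on the right. Concretely, define
\[
Y_{\vv{\lambda}} = \lim_{m \to \infty} \Ind{K_{\vv{\lambda}} \times S_m}{S_{|\vv{\lambda}|+m}}\left(\boxtimes_{U \in I(\mathcal{C})} (U^{\boxtimes |\vv{\lambda}(U)|} \otimes \mathcal{S}^{\vv{\lambda}(U)}) \boxtimes (\mathbf{1}^{\boxtimes m} \otimes \mathbf{1}_{S_m})\right),
\]
with $K_{\vv{\lambda}} = \prod_U S_{|\vv{\lambda}(U)|}$. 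Only the $\mathbf{1}$-component of $\vv{\lambda}$ interacts nontrivially with the stabilization, so by the Pieri-rule analysis of Remarks \ref{adjointRmk} and \ref{adjointRmk2}, $Y_{\vv{\lambda}} = \sum_{\vv{\mu}} X_{\vv{\mu}}$ where the sum ranges over $\vv{\mu}$ with $\vv{\mu}(U) = \vv{\lambda}(U)$ for $U \neq \mathbf{1}$ and $\vv{\lambda}(\mathbf{1})/\vv{\mu}(\mathbf{1})$ a horizontal strip. Translating to generating functions and inverting with $E(-1) = H(1)^{-1}$ yields the prefactor, reducing the theorem to the identity
\[
\sum_{\vv{\lambda}} \prod_U s_{\vv{\lambda}(U)}^{(U)} \otimes Y_{\vv{\lambda}} = \prod_l \exp\left(T_l\left(\log\left(1 + \sum_U p_l^{(U)}[U]\right)\right)\right).
\]

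Next I would establish a multi-$U$ analog of Proposition \ref{semiirred}: by decomposing each $\mathcal{S}^{\vv{\lambda}(U)}$ into cycle-type indicators and iterating Lemma \ref{class_ind} across the $U$-blocks, cycles of length $l$ originating from different $U$-factors pool together under induction into a single multi-argument $T_l$, with the combinatorial factor $\prod_U m_l(\mu^{(U)})!$ arising from the repeated applications of Lemma \ref{class_ind}. This yields
\[
Y_{\vv{\lambda}} = \sum_{\{\mu^{(U)}\}} \prod_U \chi_{\mu^{(U)}}^{\vv{\lambda}(U)} \prod_l \frac{T_l\left(\boxtimes_U U^{\boxtimes m_l(\mu^{(U)})}\right)}{\prod_U m_l(\mu^{(U)})!},
\]
where $T_l$ is applied with $m_l(\mu^{(U)})$ copies of each $U$ as arguments. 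Substituting this and applying Frobenius' formula $\sum_\lambda \chi_\mu^\lambda s_\lambda = p_\mu$ separately in each $U$-component replaces $\prod_U s_{\vv{\lambda}(U)}^{(U)}$ with $\prod_{l,U}(p_l^{(U)})^{m_l(\mu^{(U)})}$. Since $T_l$ and $T_{l'}$ commute for $l \neq l'$ by Theorem \ref{commThm}, the resulting expression factorizes as a product over $l$, reducing the task to proving, for each $l$, the identity
\[
\sum_{\{n_U\}} \prod_U \frac{t_U^{n_U}}{n_U!} T_l\left(\boxtimes_U U^{\boxtimes n_U}\right) = \exp\left(T_l\left(\log\left(1 + \sum_U t_U [U]\right)\right)\right),
\]
to be specialized at $t_U = p_l^{(U)}$.

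To prove this identity I would set $W = \sum_U t_U [U]$ as a formal element of $(\bigotimes_U \Lambda_{\mathbb{Q}}^{(U)}) \hat{\otimes} \mathcal{G}(\mathcal{C})$ and apply Corollary \ref{usefulCor} with $a = W$, which extends by multilinearity of the multi-argument $T_l$. Expanding $T_l(W^{\boxtimes n})$ multilinearly and using the symmetry of $T_l$ in its arguments identifies the left-hand side above with $\sum_n T_l(W^{\boxtimes n})/n!$. On the other hand, Corollary \ref{usefulCor} combined with the derivation from the proof of Proposition \ref{PtoT} gives
\[
\sum_n \frac{T_l(W^{\boxtimes n})}{n!} = \exp\left(\sum_i \frac{(-1)^{i-1}}{i} T_l(W^i)\right) = \exp\left(T_l\left(\sum_i \frac{(-1)^{i-1}}{i} W^i\right)\right) = \exp(T_l(\log(1+W))),
\]
using the linearity of $T_l$ in its argument, which completes the chain.

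The main obstacle is making the multi-$U$ extension of Proposition \ref{semiirred} rigorous: one must carefully verify that induction from $\prod_U S_{|\vv{\lambda}(U)|}$ interleaves the cycle-type data across distinct $U$-blocks so that the combinatorial factor $\prod_U m_l(\mu^{(U)})!$ emerges correctly and the stabilized limit of induced objects is indeed the multi-argument $T_l$ as defined. Once this combinatorial bookkeeping is established, the remaining manipulations—Frobenius formula, Pieri rule, and the exponential/log identities—closely parallel those used in the proof of Proposition \ref{PtoT}.
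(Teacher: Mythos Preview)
Your proposal is correct and follows essentially the same architecture as the paper's proof: reduce from $X_{\vv{\lambda}}$ to the stabilized inductions $Y_{\vv{\lambda}}$ via the adjoint Pieri operator (producing the $E(-1)^{(\mathbf{1})}$ prefactor), apply the multi-$U$ version of Proposition~\ref{semiirred}, convert characters to power sums via Frobenius, factor over $l$, and collapse the result into the $\exp$-$\log$ form using Proposition~\ref{tdecomp}/Corollary~\ref{usefulCor}. Your execution of the last step---first packaging the multinomial sum as $\sum_N T_l(W,\ldots,W)/N!$ with $W=\sum_U p_l^{(U)}[U]$ and then invoking Corollary~\ref{usefulCor} with the formal argument $a=W$---is a clean shortcut for the explicit word-by-word combinatorics the paper carries out after applying Proposition~\ref{tdecomp}, but the underlying computation is identical.
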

\begin{proof}
We firstly note that the first factor on the right hand side can be inverted (using the generating function relation $H(t)E(-t) = 1$). Moving it to the left had side it acts as an operator on the symmetric functions, but by taking the adjoint, we may make it act of the $X_{\vv{\lambda}}$. Taking into consideration Remark \ref{adjointRmk2}, we see that the effect of this manipulation is to replace $X_{\vv{\lambda}}$ with the expression in Proposition \ref{semiirred}: $\lim_{m \to \infty} \Ind{S_{\vv{\lambda}} \times S_m}{S_{|\vv{\lambda}|+m}} \left(\boxtimes_{U \in I(\mathcal{C})} \left(U^{\boxtimes |\vv{\lambda}|} \otimes \mathcal{S}^{\vv{\lambda}(U)}\right) \boxtimes \left(\mathbf{1}^{\boxtimes m} \otimes \mathbf{1}_{S_m}\right)\right)$. The left hand side of the equation becomes
\[
\sum_{\vv{\lambda} \in \mathcal{P}^\mathcal{C}} \left( \prod_{U \in I(\mathcal{C})} s_{\vv{\lambda}(U)}^{(U)} \right) \otimes \left( \lim_{m \to \infty} \Ind{S_{\vv{\lambda}} \times S_m}{S_{|\vv{\lambda}|+m}} \left(\boxtimes_{U \in I(\mathcal{C})} \left(U^{\boxtimes |\vv{\lambda}(U)|} \otimes \mathcal{S}^{\vv{\lambda}(U)}\right) \boxtimes \left(\mathbf{1}^{\boxtimes m} \otimes \mathbf{1}_{S_m}\right)\right)\right) 
\]
and we are required to prove that it is equal to
\[
\prod_{l=1}^{\infty}\exp \left(T_l\left(\log \left(1 + \sum_{U} p_l^{(U)}[U]\right) \right) \right)
\]
Using the same method as in the proof of Proposition \ref{semiirred}, we seek to write the expression in terms of the elements $T_i(U)$. We have
\begin{eqnarray*}
& &\lim_{m \to \infty} \Ind{S_{\vv{\lambda}} \times S_m}{S_{|\vv{\lambda}|+m}} \left(\boxtimes_{U \in I(\mathcal{C})}  \left(U^{\boxtimes |\vv{\lambda}(U)|} \otimes \mathcal{S}^{\vv{\lambda}(U)}\right) \boxtimes \left(\mathbf{1}^{\boxtimes m} \otimes \mathbf{1}_{S_m}\right) \right) \\
&=& 
\sum_{\mu^{(1)} \in \mathcal{P}} \sum_{\mu^{(2)} \in \mathcal{P}} \cdots
\left( \prod_{U_i \in I(\mathcal{C})} \chi_{\mu^{(i)}}^{\vv{\lambda}(U_i)} \right)
\prod_{i=1}^{\infty}\frac{T_i(\overbrace{U_1, U_1, \ldots}^{m_i(\mu^{(1)})}, \overbrace{U_2, U_2, \ldots}^{m_i(\mu^{(2)})}, \ldots)}{m_1(\mu^{(1)})! m_1(\mu^{(2)})! \cdots}
\end{eqnarray*}
Here $\mu^{(i)} \in \mathcal{P}$ describes a cycle type in a symmetric group associated to $U_i \in I(\mathcal{C})$.
We now let $\nu  = \cup_i \mu^{(i)}$ and use Proposition \ref{tdecomp} to express our equation in terms of $T_i(U)$ (i.e. without any inductions). We obtain
\begin{eqnarray*}
\sum_{\mu^{(1)}} \sum_{\mu^{(2)}} &\cdots&
\left( \prod_{U_i \in I(\mathcal{C})} \chi_{\mu^{(i)}}^{\vv{\lambda}(U_i)} \right)
\left( \prod_{j=1}^{\infty} \frac{1}{m_j(\mu^{(1)})! m_j(\mu^{(2)})! \cdots} \right)
 \\
\times & &
\sum_{\alpha^{(1)} \vdash m_1(\nu)}
\sum_{\alpha^{(2)} \vdash m_2(\nu)}
\cdots 
\left(\frac{\varepsilon_{\alpha^{(1)}}}{z_{\alpha^{(1)}}}
\frac{\varepsilon_{\alpha^{(2)}}}{z_{\alpha^{(2)}}}
\cdots \right) 
\prod_{l=1}^{\infty}
\sum_{\sigma \in S_{m_l(\nu)}} T_{l,\alpha^{(l)}}(\sigma(\overbrace{U_{1}, U_1, \ldots}^{m_l(\mu^{(1)})}, \overbrace{U_2, U_2, \ldots}^{m_l(\mu^{(2)})}, \ldots ))
\end{eqnarray*}
Ultimately we are calculating a generating function whose inner product with the symmetric function $\prod_{U \in I(\mathcal{C})}s_{\vv{\lambda}(U)}^{(U)}$ is the above quantity. If $\sigma$ and $\rho$ are partitions, then the fact that the inner product of $s_\sigma$ and $p_\rho$ is $\chi_{\rho}^{\sigma}$ allows us to replace the character values with power-sum symmetric functions and sum over all possible power-sum symmetric functions. We also note that the subgroup $S_{m_i(\mu^{(1)})} \times S_{m_i(\mu^{(2)})} \times \cdots$ of $S_{m_l(\nu)}$ acts fixes the vector $(\overbrace{U_{1}, U_1, \ldots}^{m_l(\mu^{(1)})}, \overbrace{U_2, U_2, \ldots}^{m_l(\mu^{(2)})}, \ldots )$ (with the usual permutation action). This means that we may restrict the sum to coset representatives of this subgroup at the cost of multiplying by $m_i(\mu^{(1)})! m_i(\mu^{(2)})! \cdots$ (which cancels out the denominators following the symmetric group characters in the above expression). Our new expression is:
\begin{eqnarray*}
\sum_{\mu^{(1)}} \sum_{\mu^{(2)}} &\cdots&
\left( \prod_{U_i \in I(\mathcal{C})} p_{\mu^{(i)}}^{(U_i)} \right) \otimes
\sum_{\alpha^{(1)} \vdash m_1(\nu)}
\sum_{\alpha^{(2)} \vdash m_2(\nu)}
\cdots 
\left(\frac{\varepsilon_{\alpha^{(1)}}}{z_{\alpha^{(1)}}}
\frac{\varepsilon_{\alpha^{(2)}}}{z_{\alpha^{(2)}}}
\cdots \right)  \\
\times & &
\prod_{l=1}^{\infty}
\sum_{\sigma \in S_{m_l(\nu)}/S_{m_l(\mu^{(1)})} \times S_{m_l(\mu^{(2)})} \cdots} T_{l,\alpha^{(l)}}(\sigma(\overbrace{U_{1}, U_1, \cdots}^{m_l(\mu^{(1)})}, \overbrace{U_2, U_2, \cdots}^{m_l(\mu^{(2)})}, \cdots ))
\end{eqnarray*}
\noindent
Now, note that the $T_{l,\alpha^{(l)}}(\cdots)$ are summed over all distinct reorderings of their arguments. We now inspect the sum over $\alpha^{(l)}$ and $\sigma$ more closely (which we take to include the terms $(p_l^{(U_1)})^{m_l(\mu^{(1)})}(p_l^{(U_2)})^{m_l(\mu^{(2)})}\cdots$ coming from $\prod_{U_i \in I(\mathcal{C})} p_{\mu^{(i)}}^{(U_i)}$):
\begin{eqnarray*}
& &(p_l^{(U_1)})^{m_l(\mu^{(1)})}(p_l^{(U_2)})^{m_l(\mu^{(2)})}\cdots \otimes
\sum_{\alpha^{(l)} \vdash m_l(\nu)} \frac{\varepsilon_{\alpha^{(l)}}}{z_{\alpha^{(l)}}}
\\ \times & &
\sum_{\sigma \in S_{m_l(\nu)}/S_{m_l(\mu^{(1)})} \times S_{m_l(\mu^{(2)})} \cdots} T_{l, \alpha^{(l)}}(\sigma(\overbrace{U_{1}, U_1, \ldots}^{m_l(\mu^{(1)})}, \overbrace{U_2, U_2, \ldots}^{m_l(\mu^{(2)})}, \ldots ))
\end{eqnarray*}
Recalling that $\frac{\varepsilon_{\alpha^{(l)}}}{z_{\alpha^{(l)}}} = \prod_{j=1}^{\infty} \frac{(-1)^{m_j(\alpha^{(l)})(j-1)}}{m_j(\alpha^{(l)})! j^{m_j(\alpha^{(l)})}}$, we have
\begin{eqnarray*}
& &\left( \prod_{U_i \in I(\mathcal{C})} (p_l^{(U_i)})^{m_l(\mu^{(i)})}
\right) \otimes
\sum_{\alpha^{(l)} \vdash m_l(\nu)}  \prod_{j=1}^{\infty} \frac{(-1)^{m_j(\alpha^{(l)})(j-1)}}{m_j(\alpha^{(l)})! j^{m_j(\alpha^{(l)})}}
\\ \times & &
\sum_{\sigma \in S_{m_l(\nu)}/S_{m_l(\mu^{(1)})} \times S_{m_l(\mu^{(2)})} \cdots} T_{l,\alpha^{(l)}}(\sigma(\overbrace{U_{1}, U_1, \ldots}^{m_l(\mu^{(1)})}, \overbrace{U_2, U_2, \ldots}^{m_l(\mu^{(2)})}, \ldots ))
\end{eqnarray*}
\noindent
We sum over all possible values of $m_l(\mu^{(1)}), m_l(\mu^{(2)}), \ldots$, which means that $\sigma(\overbrace{U_{1}, U_1, \ldots}^{m_l(\mu^{(1)})}, \overbrace{U_2, U_2, \ldots}^{m_l(\mu^{(2)})}, \ldots )$ varies across all finite words $W$ in the $U_i$ without repetition. To calculate $T_{l, \alpha^{(l)}}(W)$ we write $W_{\alpha^{(l)}, r}$ for the product of the letters of the subword of $W$ starting at the $(\alpha_1^{(l)}+\alpha_2^{(l)}+\cdots + \alpha_{r-1}^{(l)}+1)$-th place and finishing at the $(\alpha_1^{(l)} + \alpha_2^{(l)} + \cdots + \alpha_r^{(l)})$-th place. This lets us write (by definition of $T_{m, \lambda}$)
\[
T_{l, \alpha^{(l)}}(W) = T_{l}(W_{\alpha^{(l)}, 1}) T_{l}(W_{\alpha^{(l)}, 2}) \cdots T_{l}(W_{\alpha^{(l)}, l(\alpha^{(l)})})
\]
Now, in $\mathcal{G}(\mathcal{C})$ we may write $[W_{\alpha^{(l)}, r}] = \sum_{U \in I(\mathcal{C})} M_{W, \alpha^{(l)}, r}^{U}[U]$, and because $T_l(-)$ is linear, 
\[
T_{l}(W_{\alpha^{(l)}, r}) = \sum_{U \in I(\mathcal{C})} M_{W, \alpha^{(l)}, r}^{(U)} T_l(U) 
\]
If we write $|W|$ for the length of the word $W$, and $n_U(W)$ for the number of occurrences of $U$ in $W$, then we may rewrite our earlier expression as
\begin{eqnarray*}
& &\sum_{W} \left( \prod_{U_i \in I(\mathcal{C})} (p_l^{(U_i)})^{n_U(W)}
\right) \otimes
\left(
\sum_{\alpha^{(l)} \vdash |W|}  \prod_{j=1}^{\infty} \frac{(-1)^{m_j(\alpha^{(l)})(j-1)}}{m_j(\alpha^{(l)})! j^{m_j(\alpha^{(l)})}} \right) \left( \prod_r T_{l}(W_{\alpha^{(l)}, r}) \right)
\\  &=&
\sum_{W} 
\left(
\sum_{\alpha^{(l)} \vdash |W|}  \prod_{j=1}^{\infty} \frac{(-1)^{m_j(\alpha^{(l)})(j-1)}}{m_j(\alpha^{(l)})! j^{m_j(\alpha^{(l)})}} \times  \prod_r  \left( \prod_{U \in I(\mathcal{C})} (p_l^{(U)})^{n_U(W_{\alpha^{(l)},r})}
\right) \otimes
\prod_r T_{l}(W_{\alpha^{(l)}, r})
 \right)
\end{eqnarray*}
\noindent
We now note that each $W_{\alpha^{(l)},r}$ varies independently over all words in the $U_i$ of length $\alpha_r^{(l)}$. We may therefore remove the sum over $W$ at the cost of replacing 
\[
\prod_r \left( \prod_{U \in I(\mathcal{C})} (p_l^{(U)})^{n_U(W_{\alpha^{(l)},r})}\right) \otimes \prod_r T_{l}(W_{\alpha^{(l)}, r})
\]
with
\[
T_l \left(\left(\sum_{U \in I(\mathcal{C})} p_l^{(U)} [U]\right)^{\alpha_r^{(l)}}\right) 
\]
This leaves us with
\begin{eqnarray*}
& &
\sum_{\alpha^{(l)} \in \mathcal{P}} \left( \prod_{j=1}^{\infty} \frac{(-1)^{m_j(\alpha^{(l)})(j-1)}}{m_j(\alpha^{(l)})! j^{m_j(\alpha^{(l)})}} \right) 
\left( \prod_r  
T_l \left(\sum_{U \in I(\mathcal{C})} p_l^{(U)} [U]\right)^{\alpha_r^{(l)}} \right)\\
 &=&
\sum_{\alpha^{(l)} \in \mathcal{P}}  \prod_{j=1}^{\infty} \frac{1}{m_j(\alpha^{(l)})!}\left(
\frac{(-1)^{(j-1)}}{j}
\right)^{m_j(\alpha^{(l)})}
\left( T_l \left(\sum_{U \in I(\mathcal{C})} p_l^{(U)} [U]\right)^{j}
 \right)^{m_j(\alpha^{(l)})} \\
 &=&
  \prod_{j=1}^{\infty} \sum_{m_j(\alpha^{(l)})=0}^{\infty} \frac{1}{m_j(\alpha^{(l)})!}\left(
\frac{(-1)^{(j-1)}}{j}
\right)^{m_j(\alpha^{(l)})}
\left( T_l \left(\sum_{U \in I(\mathcal{C})} p_l^{(U)} [U]\right)^{j}
 \right)^{m_j(\alpha^{(l)})} 
\end{eqnarray*}
Here we used the fact that summing over all partitions $\alpha^{(l)}$ is equivalent to summing over all possible values of $m_r(\alpha^{(l)})$ for all $r$. Now we recognise the power series for the exponential and then for the logarithm.
\begin{eqnarray*}
& & \prod_{j=1}^{\infty} \exp\left(
\frac{(-1)^{(j-1)}}{j} T_l \left(\sum_{U \in I(\mathcal{C})} p_l^{(U)} [U]\right)^{j} \right) \\
 &=& \exp\left(
 \sum_{j=1}^{\infty}\frac{(-1)^{(j-1)}}{j} T_l \left(\sum_{U \in I(\mathcal{C})} p_l^{(U)} [U]\right)^{j} \right) \\
 &=&\exp\left(
T_l \left( \log\left(1+\sum_{U \in I(\mathcal{C})} p_l^{(U)} [U]\right) \right) \right)
\end{eqnarray*}
Now we simply multiply this expression for $l \in \mathbb{Z}_{>0}$ to obtain the desired result (since $T_{l_1}(U)$ commutes with $T_{l_2}(V)$ whenever $l_1 \neq l_2$, we do not need to be careful about commuting exponentials).
\end{proof}
\noindent
In order to obtain expressions for $X_{\vv{\lambda}}$ in terms of basic hooks, we must write $T_n(U)$ in terms of basic hooks.

\begin{proposition} \label{TtoP}
Recall the setting of Proposition \ref{PtoT}, where for any object $U$ of $\mathcal{C}$, we had
\[
\varphi_U(p_n) = \sum_{d | n} d T_d(U^{\frac{n}{d}})
\]
Let $\mu(n)$ be the M\"{o}bius function (defined on positive integers by $\sum_{d | n} \mu(d) = \delta_{n,1}$). We have:
\[
T_r(U) = \frac{1}{r} \sum_{d | r} \varphi_{(U^{\frac{r}{d}})}(p_d)\mu(r/d)
\]
\end{proposition}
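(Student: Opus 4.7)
The plan is to recognise this as a Möbius inversion of the formula from Proposition \ref{PtoT}. First I would apply the formula $\varphi_V(p_n) = \sum_{d \mid n} d \, T_d(V^{n/d})$ with $V = U^{r/d}$ and $n = d$ to obtain
\[
\varphi_{U^{r/d}}(p_d) = \sum_{e \mid d} e \, T_e\bigl((U^{r/d})^{d/e}\bigr) = \sum_{e \mid d} e \, T_e(U^{r/e}).
\]
Substituting this into the right hand side of the proposed identity gives
\[
\frac{1}{r}\sum_{d \mid r} \mu(r/d)\, \varphi_{U^{r/d}}(p_d) = \frac{1}{r}\sum_{d \mid r} \mu(r/d) \sum_{e \mid d} e\, T_e(U^{r/e}).
\]

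Next I would swap the order of summation to collect the coefficient of each $T_e(U^{r/e})$. For a fixed divisor $e$ of $r$, summing over $d$ with $e \mid d \mid r$ and writing $d = e m$ where $m \mid r/e$, the coefficient becomes
\[
\frac{e}{r}\sum_{m \mid r/e} \mu\bigl((r/e)/m\bigr).
\]
By the defining property of the Möbius function $\sum_{m \mid N} \mu(N/m) = \delta_{N,1}$, this vanishes unless $r/e = 1$, i.e.\ $e = r$, in which case the coefficient is $1$. Therefore the entire sum reduces to $T_r(U)$, as required.

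There is no real obstacle here; the identity is pure Möbius inversion, and the only point that needs care is verifying that substituting $V = U^{r/d}$ into Proposition \ref{PtoT} produces the claimed expansion (which is immediate since $(U^{r/d})^{d/e} = U^{r/e}$). One could also phrase the argument in the opposite direction by starting with the formula in Proposition \ref{PtoT} as a system of triangular equations in the variables $T_d(U^{n/d})$ (indexed by divisors $d$ of $n$) and quoting standard Möbius inversion, but the direct substitution is cleaner given that only the single identity above is needed.
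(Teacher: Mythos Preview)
Your proof is correct and follows essentially the same approach as the paper: both substitute the formula from Proposition \ref{PtoT} with $V=U^{r/d}$, exchange the order of summation, and apply the defining property of the M\"obius function to collapse the double sum to the single term $T_r(U)$.
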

\begin{proof}
We directly calculate
\begin{eqnarray*}
\frac{1}{r} \sum_{d | r} \varphi_{(U^{\frac{r}{d}})}(p_d)\mu(r/d) &=& \frac{1}{r} \sum_{d|r} \mu(r/d) \left( \sum_{d' | d} d'T_{d'}((U^{\frac{r}{d}})^{\frac{d}{d'}}) \right) \\
&=& \frac{1}{r} \sum_{d | r} \mu(r/d) \left(\sum_{d'|d} d' T_{d'}(U^{\frac{r}{d'}}) \right) \\
&=& \frac{1}{r} \sum_{d' | r} \left( \sum_{d' | d | r} \mu(r/d)\right) d' T_{d'}(U^{\frac{r}{d'}}) \\
&=& \frac{1}{r} \sum_{d' | r} \delta_{d',r} d' T_{d'}(U^{\frac{r}{d'}}) \\
&=& T_{r}(U) 
\end{eqnarray*}
\end{proof}
\noindent
This means that to express the $T_r(U)$ in terms of basic hooks, it is enough to decompose $\varphi_{(U^{\frac{r}{d}})}(p_d)$ into basic hooks. This task is complicated by the fact that $\varphi_{(V)}(p_d)$ is not linear in $V$ for $d > 1$. However, this difficulty is mitigated if $U^{\frac{r}{d}}$ is itself a simple object of $\mathcal{C}$, for example when $\mathcal{C} = kG-mod$ where $G$ is an abelian group (simple objects are precisely one dimensional representations of $G$, the set of which is closed under taking tensor products). When $\mathcal{C} = kG-mod$ (for abelian $G$) the problem amounts to expressing power sum symmetric functions in terms of elementary symmetric functions. The elementary symmetric functions give rise to basic hooks for simple $U \neq \mathbf{1}$, and to a sum of two basic hooks for $U = \mathbf{1}$, as per Proposition \ref{PtoT}.

\section{Applications to Symmetric Groups and Wreath Products} \label{calc_section}
\noindent
We discuss a selection of results about the asymptotic representation theory of symmetric groups and wreath products that follow from our results. Recall that the Deligne category $\underline{\mbox{Rep}}(S_t)$ is a tensor category that can be thought of as an ``interpolation'' of the representation categories of finite symmetric groups. 
\begin{theorem}
The ring $\mathcal{G}_\infty(\mathcal{C})$ is isomorphic to the Grothendieck ring (with rational coefficients) of the wreath product version of the Deligne category, $S_t(\mathcal{C})$, when $t \notin \mathbb{Z}_{\geq 0}$. The Grothendieck ring with integral coefficients is isomorphic to the integral version of $\mathcal{G}_\infty(\mathcal{C})$ described in Remark \ref{integral_rmk}.
\end{theorem}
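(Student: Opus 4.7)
The plan is to verify that the natural map $\Phi : \mathcal{G}(S_t(\mathcal{C})) \to \mathcal{G}_\infty(\mathcal{C})$ sending the class of the indecomposable object of $S_t(\mathcal{C})$ indexed by $\vv{\lambda} \in \mathcal{P}^\mathcal{C}$ to $X_{\vv{\lambda}}$ is a ring isomorphism. Since both sides are free modules with bases indexed by $\mathcal{P}^\mathcal{C}$, $\Phi$ is an additive bijection by construction; the content is to match the structure constants.

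For $t \notin \mathbb{Z}_{\geq 0}$, $S_t(\mathcal{C})$ is semisimple, so its indecomposable objects coincide with its simple objects and tensor products decompose with well-defined nonnegative integer multiplicities. I would first argue that these multiplicities are independent of $t$ throughout this regime: Hom-spaces between fixed tensor powers of the generators of $S_t(\mathcal{C})$ depend polynomially on $t$, and in the semisimple range composition multiplicities are integer-valued locally constant functions of $t$, hence constant on the connected complement $\mathbb{C} \setminus \mathbb{Z}_{\geq 0}$ (and in particular equal to their values for integer $t$ large enough to make $S_t(\mathcal{C})$ semisimple).

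Next, following the argument already used in the proof of Theorem \ref{delconst}, I would fix $\vv{\mu}, \vv{\nu}, \vv{\lambda} \in \mathcal{P}^\mathcal{C}$ and choose $n$ sufficiently large that $\vv{\mu}[n], \vv{\nu}[n], \vv{\lambda}[n]$ are all defined and that Theorem 4.13 of \cite{Mori} applies, so that Hom-spaces in $S_t(\mathcal{C})$ involving tensor products of the three fixed indecomposables can be computed by passing through the tensor functor $S_t(\mathcal{C}) \to \Wreath{n}$ at $t=n$. Under this functor the indecomposable indexed by $\vv{\rho}$ maps to the simple $R_{\vv{\rho}[n]}$, so the multiplicity of the indecomposable indexed by $\vv{\lambda}$ in the tensor product of those indexed by $\vv{\mu}$ and $\vv{\nu}$ equals $c_{\vv{\mu},\vv{\nu}}^{\vv{\lambda}}(n)$ for such $n$. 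By Theorem \ref{StrCstStab} this agrees with $\lim_{n \to \infty} c_{\vv{\mu},\vv{\nu}}^{\vv{\lambda}}(n)$, which is by definition the structure constant of $X_{\vv{\lambda}}$ in $X_{\vv{\mu}} X_{\vv{\nu}}$ in $\mathcal{G}_\infty(\mathcal{C})$. Hence $\Phi$ is a ring homomorphism, and the rational case follows.

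The main subtlety I expect is justifying that a single $n$ can be chosen large enough to simultaneously capture every indecomposable summand appearing in a fixed product $L_{\vv{\mu}} \otimes L_{\vv{\nu}}$; this reduces to the finiteness of the support of the decomposition, which follows from semisimplicity and finite-dimensionality of Hom-spaces in $S_t(\mathcal{C})$. For the integral statement, all matched structure constants are nonnegative integers, so $\Phi$ restricts to an isomorphism between the $\mathbb{Z}$-span of the indecomposable classes and the $\mathbb{Z}$-span of the $X_{\vv{\lambda}}$; the latter is precisely the integral form from Remark \ref{integral_rmk}.
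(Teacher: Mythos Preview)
Your proposal is correct and follows essentially the same approach as the paper: both invoke the methods of Theorem \ref{delconst} (Mori's Theorem 4.13 and the tensor functor $S_t(\mathcal{C}) \to \Wreath{n}$) to identify the structure constants in $S_t(\mathcal{C})$ with the stable limits $\lim_{n\to\infty} c_{\vv{\mu},\vv{\nu}}^{\vv{\lambda}}(n)$. Your write-up is in fact more careful than the paper's two-sentence proof, since you make explicit the constancy of structure constants across the semisimple locus $t \notin \mathbb{Z}_{\geq 0}$ and the finiteness argument needed to choose a single sufficiently large $n$.
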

\begin{proof}
When $t \notin \mathbb{Z}_{\geq 0}$, the simple objects of the category $S_t(\mathcal{C})$ are parametrised by $\vv{\lambda} \in \mathcal{P}^\mathcal{C}$. The methods of Theorem \ref{delconst} allow one to deduce that the structure constants for non-integral $t$ agree with the corresponding stable limits as $t \in \mathbb{Z}_{\geq 0}$ tends to infinity.
\end{proof}
\noindent
The wreath product categories are discussed in \cite{Mori}, and various aspects of the theory of Deligne categories are discussed in \cite{etingofRCR1} and \cite{etingofRCR2}.
\noindent
We now give a way for computing a formula for structure constants of $\mathcal{G}_\infty(\mathcal{C})$ with respect to the $X_{\vv{\lambda}}$ basis. Of course, these are also the structure constants in the Grothendieck ring of a Deligne category. We use Theorem \ref{bigthm} with multiple different sets of symmetric function variables. It will be convenient to write $p_l(\mathbf{x}^{(U)})$ instead of $p_l^{(U)}(\mathbf{x})$. 
\begin{theorem} \label{coefthm}
Write $N_{U, V}^{W}$ for the structure tensor (so that $[U][V] = \sum_{W} N_{U,V}^{W}$). Write $\mathbf{z}^{(U)}$ to denote the family of symmetric function variables $\bigoplus_{V_1, V_2 \in I(\mathcal{C})} \left(\mathbf{x}^{(V_1)}\mathbf{y}^{(V_2)}\right)^{\oplus N_{V_1, V_2}^{U}}$, where direct sum notation denotes a disjoint union of symmetric function variables, and the direct sum in the exponents denotes the multiplicity of each of the sets of variables. Then the the multiplicity of $X_{\vv{\lambda}}$ in $X_{\vv{\mu}}X_{\vv{\nu}}$ is given by the coefficient of
\[
\left( \prod_{U \in I(\mathcal{C})} s_{\vv{\mu}(U)}(\mathbf{x}^{(U)}) \prod_{V \in I(\mathcal{C})} s_{\vv{\nu}(V)}(\mathbf{y}^{(V)}) \right) 
\]
in
\[
\prod_{U,V \in I(\mathcal{C})} \left( \sum_{\rho \in \mathcal{P}} s_\rho(\mathbf{x}^{(U)}) s_\rho(\mathbf{y}^{(V)})
\right)^{N_{U,V}^{(\mathbf{1})}}
 \left( \prod_{U \in I(\mathcal{C})} s_{\vv{\lambda}(U)}\left(\mathbf{x}^{(U)},\mathbf{y}^{(U)},\mathbf{z}^{(U)} \right) \right)
\]
\end{theorem}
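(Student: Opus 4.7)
The plan is to apply Theorem \ref{bigthm} twice, with disjoint families of variables $\mathbf{x}^{(U)}$ and $\mathbf{y}^{(V)}$, and then simplify the resulting product of generating functions. Write $F(\mathbf{x}) = \sum_{\vv{\mu}} \prod_U s_{\vv{\mu}(U)}(\mathbf{x}^{(U)}) \otimes X_{\vv{\mu}}$ for the right-hand side of Theorem \ref{bigthm}. Then $F(\mathbf{x})F(\mathbf{y})$ is the generating series for the products $X_{\vv{\mu}}X_{\vv{\nu}}$ weighted by $\prod_U s_{\vv{\mu}(U)}(\mathbf{x}^{(U)}) \prod_V s_{\vv{\nu}(V)}(\mathbf{y}^{(V)})$, so the multiplicity of $X_{\vv{\lambda}}$ in $X_{\vv{\mu}}X_{\vv{\nu}}$ is exactly the coefficient of $\prod_U s_{\vv{\mu}(U)}(\mathbf{x}^{(U)}) \prod_V s_{\vv{\nu}(V)}(\mathbf{y}^{(V)}) \otimes X_{\vv{\lambda}}$ in $F(\mathbf{x})F(\mathbf{y})$. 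The task reduces to recasting $F(\mathbf{x})F(\mathbf{y})$ in the form demanded by the theorem.

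I would split $F(\mathbf{x}) = \mathcal{E}(\mathbf{x}) G(\mathbf{x})$, where $\mathcal{E}(\mathbf{x}) = \sum_r (-1)^r e_r(\mathbf{x}^{(\mathbf{1})})$ and $G(\mathbf{x}) = \prod_l \exp(T_l(\log(1 + a_l^{(\mathbf{x})})))$ with $a_l^{(\mathbf{x})} = \sum_U p_l(\mathbf{x}^{(U)})[U]$. The $\mathcal{E}$-factors are scalars in the symmetric-function component and so are central, and by Theorem \ref{commThm} the exponentials $\exp(T_l(\cdot))$ for different $l$ commute. Hence it suffices to simplify, for each fixed $l$, the local product $\exp(T_l(\log(1+a_l^{(\mathbf{x})}))) \exp(T_l(\log(1+a_l^{(\mathbf{y})})))$. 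The crucial identity is $\exp(T_l(\log(1+a))) \exp(T_l(\log(1+b))) = \exp(T_l(\log((1+a)(1+b))))$ for $a, b \in \mathcal{G}(\mathcal{C})$ with symmetric-function coefficients. This follows from two ingredients: by Remark \ref{lie_hom_rmk}, $[U] \mapsto T_l(U)$ is a Lie algebra homomorphism from $\mathcal{G}(\mathcal{C})$ (with commutator bracket) to $\mathcal{G}_\infty(\mathcal{C})$, hence extends to an associative algebra map out of the universal enveloping algebra and therefore transports Baker--Campbell--Hausdorff combinations; and in the associative algebra $\mathcal{G}(\mathcal{C})$ itself, the BCH combination (using the commutator bracket) of $\log(1+a)$ and $\log(1+b)$ equals $\log((1+a)(1+b))$, because $\exp(\log(1+a))\exp(\log(1+b)) = (1+a)(1+b)$.

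With the identity in hand, expanding $(1+a_l^{(\mathbf{x})})(1+a_l^{(\mathbf{y})})$ and using $[U][V] = \sum_W N_{U,V}^W [W]$ together with the plethystic identity $p_l(\mathbf{x}^{(U)}) p_l(\mathbf{y}^{(V)}) = p_l(\mathbf{x}^{(U)}\mathbf{y}^{(V)})$ shows that the coefficient of $[W]$ becomes exactly $p_l(\mathbf{x}^{(W)}) + p_l(\mathbf{y}^{(W)}) + p_l(\mathbf{z}^{(W)}) = p_l(\mathbf{x}^{(W)}, \mathbf{y}^{(W)}, \mathbf{z}^{(W)})$, with $\mathbf{z}^{(W)}$ the variable set defined in the statement. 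Therefore $G(\mathbf{x})G(\mathbf{y}) = G(\mathbf{x},\mathbf{y},\mathbf{z})$, and combining with the factorisation $\mathcal{E}(\mathbf{x},\mathbf{y},\mathbf{z}) = \mathcal{E}(\mathbf{x})\mathcal{E}(\mathbf{y})\mathcal{E}(\mathbf{z})$ (which is immediate from $e_r(\mathbf{x},\mathbf{y},\mathbf{z}) = \sum e_i(\mathbf{x})e_j(\mathbf{y})e_k(\mathbf{z})$) yields $F(\mathbf{x})F(\mathbf{y}) = \mathcal{E}(\mathbf{z})^{-1} F(\mathbf{x},\mathbf{y},\mathbf{z})$. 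Finally, $\mathcal{E}(\mathbf{z})^{-1} = \sum_r h_r(\mathbf{z}^{(\mathbf{1})})$ by $H(t)E(-t)=1$, and writing this as $\prod_{w \in \mathbf{z}^{(\mathbf{1})}}(1-w)^{-1}$ and applying the Cauchy identity converts it to $\prod_{U,V}(\sum_\rho s_\rho(\mathbf{x}^{(U)}) s_\rho(\mathbf{y}^{(V)}))^{N_{U,V}^{\mathbf{1}}}$, since by construction $\mathbf{z}^{(\mathbf{1})} = \bigoplus_{U,V}(\mathbf{x}^{(U)}\mathbf{y}^{(V)})^{\oplus N_{U,V}^{\mathbf{1}}}$. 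Extracting the coefficient of $\prod s_{\vv{\mu}(U)}(\mathbf{x}^{(U)}) \prod s_{\vv{\nu}(V)}(\mathbf{y}^{(V)}) \otimes X_{\vv{\lambda}}$ from both sides of $F(\mathbf{x})F(\mathbf{y}) = \prod_{U,V}(\sum_\rho s_\rho(\mathbf{x}^{(U)}) s_\rho(\mathbf{y}^{(V)}))^{N_{U,V}^{\mathbf{1}}} F(\mathbf{x},\mathbf{y},\mathbf{z})$ then gives the claim. The main obstacle is justifying the BCH identity rigorously in the completed tensor product $(\bigotimes_U \Lambda^{(U)}_{\mathbb{Q}}) \hat{\otimes} \mathcal{G}_\infty(\mathcal{C})$ and checking that the formal $\exp$/$\log$ manipulations remain consistent under the Lie homomorphism $T_l$; once this is secured, the remainder of the argument is bookkeeping with classical symmetric-function identities.
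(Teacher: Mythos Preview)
Your proposal is correct and follows essentially the same route as the paper: apply Theorem \ref{bigthm} in two disjoint variable sets, use the Lie-homomorphism property of $T_l$ together with Baker--Campbell--Hausdorff to collapse the product of exponentials into $\exp(T_l(\log((1+a)(1+b))))$, rewrite the argument in the merged variable set $(\mathbf{x}^{(U)},\mathbf{y}^{(U)},\mathbf{z}^{(U)})$, and then use the multiplicativity of $E(-1)$ with respect to variable sets plus the Cauchy identity to produce the prefactor. The paper's proof is organized in the same way, with the same key steps in the same order.
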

\begin{proof}
We manipulate generating functions, starting with one where the coefficient of 
\[
\left( \prod_{U \in I(\mathcal{C})} s_{\vv{\mu}(U)}(\mathbf{x}^{(U)}) \prod_{V \in I(\mathcal{C})} s_{\vv{\nu}(V)}(\mathbf{y}^{(V)}) \right)
\]
 is $X_{\vv{\mu}}X_{\vv{\nu}}$. Thus, the problem reduces to understanding the coefficient of $X_{\vv{\lambda}}$ in the generating function.
\begin{eqnarray*}
& &
\sum_{\vv{\mu} \in \mathcal{P}^\mathcal{C}}\sum_{\vv{\nu} \in \mathcal{P}^\mathcal{C}}
\left( \prod_{U \in I(\mathcal{C})} s_{\vv{\mu}(U)}(\mathbf{x}^{(U)}) \prod_{V \in I(\mathcal{C})} s_{\vv{\nu}(V)}(\mathbf{y}^{(V)}) \right) \otimes \left( X_{\vv{\mu}}X_{\vv{\nu}} \right) \\
&=&
\left( \sum_{\vv{\mu} \in \mathcal{P}_\mathcal{C}} \left( \prod_{U \in I(\mathcal{C})} s_{\vv{\mu}(U)}(\mathbf{x}^{(U)}) \right) \otimes X_{\vv{\mu}}  \right)
\left( \sum_{\vv{\nu} \in \mathcal{P}_\mathcal{C}} \left( \prod_{V \in I(\mathcal{C})} s_{\vv{\nu}(V)}(\mathbf{y}^{(V)}) \right) \otimes X_{\vv{\nu}}  \right) \\
&=&
\left(\sum_{r \geq 0} (-1)^r e_r(\mathbf{x}^{(\mathbf{1})}) \right)
\prod_{l=1}^{\infty}\exp \left(T_l \left( \log \left(1 + \sum_{U \in I(\mathcal{C})} p_l(\mathbf{x}^{(U)})[U]\right)\right) \right) \\
&\times&
\left(\sum_{r \geq 0} (-1)^r e_r(\mathbf{y}^{(\mathbf{1})}) \right)
\prod_{l=1}^{\infty}\exp \left(T_l \left( \log \left(1 + \sum_{V \in I(\mathcal{C})} p_l(\mathbf{y}^{(V)})[V]\right) \right) \right) 
\end{eqnarray*}
\noindent
We now use the Baker-Campbell-Hausdorff formula; it provides an expansion for $\log(\exp(A)\exp(B))$ as $\mbox{BCH}(A,B) = A + B + \frac{1}{2}[A, B] + \cdots$, for possibly non-commuting $A,B$ as a linear combination of iterated commutators of $A$ and $B$ (we view the monomials $A$ and $B$ as degenerate commutators). Because $T_l(-)$ respects commutators in the sense of a Lie algebra homomorphism (see Remark \ref{lie_hom_rmk}), we may write:
\begin{eqnarray*}
& &
\exp \left(T_l \left( \log \left(1 + \sum_{U \in I(\mathcal{C})} p_l(\mathbf{x}^{(U)})[U]\right)\right) \right) 
\exp \left(T_l \left( \log \left(1 + \sum_{V \in I(\mathcal{C})} p_l(\mathbf{y}^{(V)})[V]\right) \right) \right) \\
&=&
\exp \left(\mbox{BCH}\left(T_l \left( \log \left(1 + \sum_{U \in I(\mathcal{C})} p_l(\mathbf{x}^{(U)})[U]\right)\right), T_l \left( \log \left(1 + \sum_{V \in I(\mathcal{C})} p_l(\mathbf{y}^{(V)})[V]\right) \right) \right) \right) \\
&=&
\exp \left(T_l\left(\mbox{BCH}\left(\log \left(1 + \sum_{U \in I(\mathcal{C})} p_l(\mathbf{x}^{(U)})[U]\right),  \log \left(1 + \sum_{V \in I(\mathcal{C})} p_l(\mathbf{y}^{(V)})[V]\right) \right) \right)\right) \\
&=&
\exp \left(T_l\left(\log \left(\left(1 + \sum_{U \in I(\mathcal{C})} p_l(\mathbf{x}^{(U)})[U]\right) \left(1 + \sum_{V \in I(\mathcal{C})} p_l(\mathbf{y}^{(V)})[V]\right) \right) \right)\right) \\
\end{eqnarray*}
In the last step we used the fact that $\mbox{BCH}(\log(A), \log(B)) = \log(AB)$ (equivalent to $\mbox{BCH}(A,B) = \log(\exp(A)\exp(B))$). We rewrite our expression to feature only one power-sum symmetric function (albeit with a complicated set of variables). We use the facts that $p_l(\mathbf{x},\mathbf{y}) = p_l(\mathbf{x}) + p_l(\mathbf{y})$ and $p_l(\mathbf{xy}) = p_l(\mathbf{x}) p_l(\mathbf{y})$
\begin{eqnarray*}
& &
\exp \left(T_l\left(\log \left(\left(1 + \sum_{U \in I(\mathcal{C})} p_l(\mathbf{x}^{(U)})[U]\right) \left(1 + \sum_{V \in I(\mathcal{C})} p_l(\mathbf{y}^{(V)})[V]\right) \right) \right)\right) \\
&=&
\exp \left(T_l\left(\log \left(1 + \sum_{U \in I(\mathcal{C})} p_l(\mathbf{x}^{(U)})[U] +  \sum_{V \in I(\mathcal{C})} p_l(\mathbf{y}^{(V)})[V] + \sum_{U, V \in I(\mathcal{C})}p_l(\mathbf{x}^{(U)})p_l(\mathbf{y}^{(V)}) [U][V] \right) \right)\right) \\
&=&
\exp \left(T_l\left(\log \left(1 + \sum_{U \in I(\mathcal{C})} p_l\left( \mathbf{x}^{(U)}, \mathbf{y}^{(U)}, \bigoplus_{V_1, V_2 \in I(\mathcal{C})}
\left(\mathbf{x}^{(V_1)}\mathbf{y}^{(V_2)}\right)^{\oplus N_{V_1, V_2}^{U}} \right) [U]\right)   \right)\right) 
%
\end{eqnarray*}
Here we have used direct sum notation to indicate that $p_l$ should have a collection of symmetric function variables as arguments, and the direct sum in the exponents denotes the multiplicity of each of the sets of variables. For convenience we write $\mathbf{z}^{(U)}$ to denote the family of symmetric function variables $\bigoplus_{V_1, V_2 \in I(\mathcal{C})} \left(\mathbf{x}^{(V_1)}\mathbf{y}^{(V_2)}\right)^{\oplus N_{V_1, V_2}^{U}}$. Note that if the variables $\mathbf{x}$ are indexed as $x_i$, we have
\[
E(t) = \sum_{r \geq 0} e_r(\mathbf{x}) t^r = \prod_{i} (1 + x_i t)
\]
So $E(t)$ (and in particular $E(-1)$) is multiplicative with respect to variable sets.
\[
\left(\sum_{r \geq 0} (-1)^r e_r(\mathbf{x}^{(\mathbf{1})}) \right)
\left(\sum_{s \geq 0} (-1)^s e_s(\mathbf{y}^{(\mathbf{1})}) \right)
=
\left(\sum_{r \geq 0} (-1)^r e_r(\mathbf{x}^{(\mathbf{1})},\mathbf{y}^{(\mathbf{1})}) \right)
\]
Thus our original generating function becomes
\[
\left(\sum_{r \geq 0} (-1)^r e_r(\mathbf{x}^{(\mathbf{1})},\mathbf{y}^{(\mathbf{1})}) \right)
\prod_{l=1}^{\infty}\exp \left(T_l \left( \log \left(1 + \sum_{U} p_l\left(\mathbf{x}^{(U)},\mathbf{y}^{(U)},\mathbf{z}^{(U)}\right)[U]\right) \right) \right) 
\]
This is very close to the generating function of Theorem \ref{bigthm} in variables $\mathbf{x}^{(U)},\mathbf{y}^{(U)},\mathbf{z}^{(U)}$ (only the leading factor is different). Because the leading factor is multiplicative with respect to variable sets, we may write it as
\[
\frac{1}{\sum_{r \geq 0} (-1)^r e_r(\mathbf{z}^{(\mathbf{1})}) }
\sum_{\vv{\lambda} \in \mathcal{P}^\mathcal{C}} \left( \prod_{U \in I(\mathcal{C})} s_{\vv{\lambda}(U)}\left(\mathbf{x}^{(U)},\mathbf{y}^{(U)},\mathbf{z}^{(U)} \right) \right) \otimes X_{\vv{\lambda}}
\]
If the variables $\mathbf{x}^{(U)}$ and $\mathbf{y}^{(V)}$ are indexed as $x_i^{(U)}$ and $y_j^{(V)}$ respectively, the leading term can also be written
\[
\prod_{U,V \in I(\mathcal{C})} \left(\prod_{i,j}\frac{1}{1 - x_i^{(U)}y_j^{(V)}}\right)^{N_{U,V}^{(\mathbf{1})}}
= 
\prod_{U,V \in I(\mathcal{C})} \left( \sum_{\rho \in \mathcal{P}} s_\rho(\mathbf{x}^{(U)}) s_\rho(\mathbf{y}^{(V)})
\right)^{N_{U,V}^{(\mathbf{1})}}
\]
Upon considering the coefficient of $X_{\vv{\lambda}}$ in
\[
\prod_{U,V \in I(\mathcal{C})} \left( \sum_{\rho \in \mathcal{P}} s_\rho(\mathbf{x}^{(U)}) s_\rho(\mathbf{y}^{(V)})
\right)^{N_{U,V}^{(\mathbf{1})}}
\sum_{\vv{\lambda} \in \mathcal{P}^\mathcal{C}} \left( \prod_{U \in I(\mathcal{C})} s_{\vv{\lambda}(U)}\left(\mathbf{x}^{(U)},\mathbf{y}^{(U)},\mathbf{z}^{(U)} \right) \right) \otimes X_{\vv{\lambda}}
\]
we obtain the statement of the theorem.
\end{proof}
\subsection{The Case of $\mathcal{C} = \mbox{Vect}(k)$}
Now we specialise to the case where $\mathcal{C}$ is the category of finite-dimensional vector spaces over $k$. In that case there is only one isomorphism class of simple object $U \in I(\mathcal{C})$, namely $k$ which is idempotent with respect to the tensor structure. As it plays no role, we drop $U=k$ from the notation. Also, $\mathcal{P}^{\mathcal{C}}$ is identified with $\mathcal{P}$. To illustrate how to perform the computation in the statement of Theorem \ref{coefthm}, we prove the following theorem of Littlewood \cite{lw}.
\begin{theorem} \label{identity}
The reduced Kronecker coefficients satisfy the following identity:
\[
\tilde{k}_{\mu, \nu}^{\lambda} =
\sum_{\sigma^{(1)}, \sigma^{(2)}, \sigma^{(3)} \in \mathcal{P}} \sum_{\rho^{(1)}, \rho^{(2)}, \rho^{(3)} \in \mathcal{P}} k_{\sigma^{(2)}, \sigma^{(3)}}^{\sigma^{(1)}}c_{\sigma^{(1)},\rho^{(2)}, \rho^{(3)}}^{\lambda}c_{\rho^{(1)}, \sigma^{(2)},\rho^{(3)}}^{\mu}c_{\rho^{(1)}, \rho^{(2)},\sigma^{(3)}}^{\nu}
\]
Here $k_{\rho^{(1)}, \rho^{(2)}}^{\rho^{(3)}}$ is a Kronecker coefficient, and $c_{\alpha, \beta, \gamma}^{\delta}$ is a (generalised) Littlewood-Richardson coefficient (it is the coefficient of $s_\delta$ in $s_\alpha s_\beta s_\gamma$).
\end{theorem}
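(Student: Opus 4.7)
The plan is to specialise Theorem \ref{coefthm} to the case $\mathcal{C} = \mbox{Vect}(k)$ and then unpack both sides using the Cauchy identity together with the standard Schur expansions $s_\lambda(\mathbf{x},\mathbf{y}) = \sum c_{\mu,\nu}^{\lambda}s_\mu(\mathbf{x})s_\nu(\mathbf{y})$ and $s_\lambda(\mathbf{x}\mathbf{y}) = \sum k_{\mu,\nu}^{\lambda}s_\mu(\mathbf{x})s_\nu(\mathbf{y})$. The first step is to identify the structure constants of $X_\mu X_\nu$ in $\mathcal{G}_\infty(\mathcal{C})$ with reduced Kronecker coefficients: when $\mathcal{C}=\mbox{Vect}(k)$, we have $\Wreath{n} = kS_n\mbox{-mod}$, $R_{\vv{\mu}[n]} = \mathcal{S}^{\mu[n]}$, and $c_{\mu,\nu}^{\lambda}(n) = k_{\mu[n],\nu[n]}^{\lambda[n]}$, which stabilises to $\tilde{k}_{\mu,\nu}^{\lambda}$ by Lemma \ref{kron_stab}.

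Next I would simplify the right-hand side of Theorem \ref{coefthm}. Since $I(\mathcal{C})$ contains only $\mathbf{1}=k$, and $N_{k,k}^{k}=1$, the family of variables $\mathbf{z}^{(\mathbf{1})}$ collapses to the single set $\mathbf{x}\mathbf{y}$, the Cauchy-like factor reduces to $\sum_\rho s_\rho(\mathbf{x}) s_\rho(\mathbf{y})$, and we obtain that $\tilde{k}_{\mu,\nu}^{\lambda}$ is the coefficient of $s_\mu(\mathbf{x})s_\nu(\mathbf{y})$ in
\[
\Bigl(\sum_{\rho^{(1)}} s_{\rho^{(1)}}(\mathbf{x})\, s_{\rho^{(1)}}(\mathbf{y})\Bigr)\, s_\lambda(\mathbf{x},\mathbf{y},\mathbf{x}\mathbf{y}).
\]
Then I would expand the second factor using the triple Littlewood--Richardson expansion
\[
s_\lambda(\mathbf{x},\mathbf{y},\mathbf{x}\mathbf{y}) = \sum_{\sigma^{(1)},\rho^{(2)},\rho^{(3)}} c_{\sigma^{(1)},\rho^{(2)},\rho^{(3)}}^{\lambda}\, s_{\rho^{(3)}}(\mathbf{x})\, s_{\rho^{(2)}}(\mathbf{y})\, s_{\sigma^{(1)}}(\mathbf{x}\mathbf{y}),
\]
followed by the Kronecker expansion $s_{\sigma^{(1)}}(\mathbf{x}\mathbf{y}) = \sum_{\sigma^{(2)},\sigma^{(3)}} k_{\sigma^{(2)},\sigma^{(3)}}^{\sigma^{(1)}} s_{\sigma^{(2)}}(\mathbf{x})s_{\sigma^{(3)}}(\mathbf{y})$.

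After multiplying all the factors together, the coefficient of $s_\mu(\mathbf{x})s_\nu(\mathbf{y})$ requires extracting $s_\mu$ from the product $s_{\rho^{(1)}}(\mathbf{x})s_{\rho^{(3)}}(\mathbf{x})s_{\sigma^{(2)}}(\mathbf{x})$, and $s_\nu$ from $s_{\rho^{(1)}}(\mathbf{y})s_{\rho^{(2)}}(\mathbf{y})s_{\sigma^{(3)}}(\mathbf{y})$, producing the triple Littlewood--Richardson coefficients $c_{\rho^{(1)},\sigma^{(2)},\rho^{(3)}}^{\mu}$ and $c_{\rho^{(1)},\rho^{(2)},\sigma^{(3)}}^{\nu}$ (using the symmetry of $c_{\alpha,\beta,\gamma}^{\delta}$ in its lower indices, which follows from commutativity of multiplication in $\Lambda$). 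Summing these products against $k_{\sigma^{(2)},\sigma^{(3)}}^{\sigma^{(1)}}c_{\sigma^{(1)},\rho^{(2)},\rho^{(3)}}^{\lambda}$ yields exactly the claimed identity. The whole argument is essentially a symbol-tracking exercise; the one place to be careful is matching the indexing conventions in Theorem \ref{coefthm} (particularly the role of the $\mathbf{z}$-variables) with the shuffled indices appearing in Littlewood's identity, but there is no real obstacle once Theorem \ref{coefthm} is in hand.
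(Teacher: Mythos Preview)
Your proposal is correct and follows essentially the same route as the paper: specialise Theorem~\ref{coefthm} to $\mathcal{C}=\mathrm{Vect}(k)$ so that $\mathbf{z}^{(\mathbf{1})}=\mathbf{xy}$ and the structure constants become $\tilde{k}_{\mu,\nu}^{\lambda}$, then expand $s_\lambda(\mathbf{x},\mathbf{y},\mathbf{xy})$ via generalised Littlewood--Richardson coefficients, expand $s_{\sigma^{(1)}}(\mathbf{xy})$ via Kronecker coefficients, and finally read off the coefficient of $s_\mu(\mathbf{x})s_\nu(\mathbf{y})$ as a product of triple Littlewood--Richardson coefficients. Your remark about invoking the symmetry of $c_{\alpha,\beta,\gamma}^{\delta}$ to reconcile the index ordering is exactly the small bookkeeping point needed to match the statement.
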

\begin{proof}
We consider the case where $\mathcal{C}$ is the category of finite-dimensional vector spaces over $k$ as stated above. Thus $\mathbf{z}^{(k)}$ (in the notation of Theorem \ref{coefthm}) is just $\mathbf{xy}$. Below, all sums are over the set of partitions. The coefficient of $s_\mu(\mathbf{x}) s_\nu(\mathbf{y})$ in the following quantity is the value we wish to calculate.
\begin{eqnarray*}
& &\left( \sum_{\rho^{(1)}}s_{\rho^{(1)}}(\mathbf{x})s_{\rho^{(1)}}(\mathbf{y}) \right) \sum_{\lambda }
s_\lambda(\mathbf{x}, \mathbf{y}, \mathbf{xy})  \\
&=& 
 \sum_{\rho^{(1)}}s_{\rho^{(1)}}(\mathbf{x})s_{\rho_1}(\mathbf{y})  \sum_{\lambda, \rho^{(2)}, \rho^{(3)}}\sum_{\sigma^{(1)}}
c_{\sigma^{(1)} \rho^{(2)}, \rho^{(3)}}^{\lambda}
\left(s_{\rho^{(3)}}(\mathbf{x}) s_{\rho^{(2)}}(\mathbf{y}) s_{\sigma^{(1)}}(\mathbf{xy})\right)  \\
&=&
 \sum_{\rho^{(1)}, \rho^{(2)}, \rho^{(3)}}s_{\rho^{(1)}}(\mathbf{x})s_{\rho^{(1)}}(\mathbf{y})  \sum_\lambda \sum_{ \sigma^{(1)}, \sigma^{(2)}, \sigma^{(3)} }
c_{\sigma^{(1)}, \rho^{(2)}, \rho^{(3)}}^{\lambda} k_{\sigma^{(2)}, \sigma^{(3)}}^{\sigma^{(1)}}
\left(s_{\rho^{(3)}}(\mathbf{x}) s_{\rho^{(2)}}(\mathbf{y}) s_{\sigma^{(3)}}(\mathbf{x})s_{\sigma^{(2)}}(\mathbf{y})\right) \\
&=&
 \sum_{\rho^{(1)}, \rho^{(2)}, \rho^{(3)}}\sum_{ \sigma^{(1)}, \sigma^{(2)}, \sigma^{(3)} }
  k_{\sigma^{(2)}, \sigma^{(3)}}^{\sigma^{(1)}}
\sum_{\lambda}c_{\sigma^{(1)}, \rho^{(2)}, \rho^{(3)}}^{\lambda}
\sum_{\mu }c_{\rho^{(1)}, \sigma^{(2)}, \rho^{(3)}}^{\mu}s_{\mu}({\mathbf{x}})
\sum_{\nu }c_{\rho^{(1)}, \rho^{(2)}, \sigma^{(3)}}^{\nu}s_{\nu}({\mathbf{y}})
\end{eqnarray*}
This completes the proof.
\end{proof}

\noindent
We also point out that Theorem \ref{bigthm} gives a generating function for a known family of symmetric functions, the \emph{irreducible character basis} $\tilde{s}_\lambda$ from \cite{OZ2}. As above we omit $U = k$ entirely from our notation, as well as the tensor product symbols. Theorem \ref{bigthm} becomes
\begin{theorem}
We have the following equality of generating functions.
\[
\sum_{\lambda \in \mathcal{P}} s_\lambda X_\lambda = \left( \sum_{i \geq 0}(-1)^i e_i \right) \prod_{l \geq 1} \left( 1 + p_l\right)^{T_l}
\]
\end{theorem}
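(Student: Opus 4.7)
The plan is to deduce this identity as a direct specialization of Theorem~\ref{bigthm} to the case $I(\mathcal{C}) = \{\mathbf{1}\}$, where $\mathbf{1} = k$ is the unique simple object and is idempotent under the tensor product. First I would observe that $\mathcal{P}^{\mathcal{C}}$ is identified with $\mathcal{P}$, so the superscripts $(U)$ on symmetric functions can be dropped, and the left-hand side of Theorem~\ref{bigthm}, namely $\sum_{\vv{\lambda}}\bigl(\prod_{U} s_{\vv{\lambda}(U)}^{(U)}\bigr) \otimes X_{\vv{\lambda}}$, collapses to $\sum_\lambda s_\lambda X_\lambda$ (with the tensor product symbol suppressed), matching the left-hand side of the claim.

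For the right-hand side, the leading factor $\sum_{r\geq 0}(-1)^r e_r^{(\mathbf{1})}$ immediately becomes $\sum_{r\geq 0}(-1)^r e_r$. It remains to identify each factor $\exp\bigl(T_l(\log(1 + p_l[\mathbf{1}]))\bigr)$ in the infinite product with $(1+p_l)^{T_l}$. I would expand the inner logarithm formally: writing $X = p_l[\mathbf{1}]$ and using that $\mathbf{1}$ is the tensor unit so $[\mathbf{1}]^n = [\mathbf{1}]$ for all $n \geq 1$, one computes
\[
\log(1 + p_l[\mathbf{1}]) = \sum_{n\geq 1}\frac{(-1)^{n-1}}{n}\, p_l^n\, [\mathbf{1}] = \log(1+p_l) \cdot [\mathbf{1}].
\]
By the definition of the operator $T_l$ in the statement of Theorem~\ref{bigthm}, which substitutes $T_l(\mathbf{1})$ for $[\mathbf{1}]$ while preserving symmetric-function coefficients, applying it produces $\log(1+p_l) \otimes T_l(\mathbf{1})$, which I abbreviate as $\log(1+p_l)\cdot T_l$.

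Finally, since $\log(1+p_l)$ and $T_l$ lie in commuting tensor factors, the exponential evaluates to $\exp(\log(1+p_l)\cdot T_l) = (1+p_l)^{T_l}$ in the standard formal sense, either via the Newton series $(1+p_l)^{T_l} = \sum_{n\geq 0}\binom{T_l}{n}p_l^n$ or via the identity $(1+x)^y = \exp(y\log(1+x))$. Taking the product over $l \geq 1$ then yields the right-hand side of the claim. There is no substantive obstacle: the argument is essentially notational bookkeeping, and the only point requiring care is that the formal power-series manipulations are consistent with how $T_l(\,\cdot\,)$ is defined in Theorem~\ref{bigthm}, which is clear from the uniqueness of the expansion of $p_l^n = p_{(l^n)}$ in the power-sum basis.
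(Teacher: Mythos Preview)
Your proposal is correct and matches the paper's approach exactly: the paper presents this theorem without a separate proof, simply stating that ``Theorem \ref{bigthm} becomes'' this identity once one specializes to $\mathcal{C} = \mathrm{Vect}(k)$ and drops the superscripts and tensor symbols. Your write-up in fact supplies more detail than the paper does, carefully verifying that $\log(1+p_l[\mathbf{1}]) = \log(1+p_l)\cdot[\mathbf{1}]$ via the idempotence of $[\mathbf{1}]$ and that the resulting exponential is the formal power $(1+p_l)^{T_l}$.
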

\noindent
Let the variables of the symmetric functions present in the above expression be $\mathbf{x}$.
We introduce a new set of symmetric functions in the variables $\mathbf{y}$ such that $\varphi_{\mathbf{1}}(e_i) = \overline{e}_i(\mathbf{1}) + \overline{e}_{i-1}(\mathbf{1})$ is identified with $e_i(\mathbf{y})$. We write $\tilde{s}_\lambda(\mathbf{y})$ for the the symmetric function obtained by writing $X_\lambda$ in terms of the variables $\mathbf{y}$. In accordance with Proposition \ref{TtoP} we have the following equality.
\begin{eqnarray*}
\sum_{\lambda \in \mathcal{P}} s_\lambda(\mathbf{x})\tilde{s}_\lambda(\mathbf{y}) 
&=& \left( \sum_{i \geq 0}(-1)^i e_i(\mathbf{x}) \right) \prod_{l \geq 1} \left( 1 + p_l(\mathbf{x})\right)^{\frac{1}{l}\sum_{d | l} \mu(l/d)p_d(\mathbf{y})} \\
&=&
\left( \sum_{i \geq 0}(-1)^i e_i(\mathbf{x}) \right) \prod_{l \geq 1} 
\sum_{r \geq 0} p_l(\mathbf{x})^r {\frac{1}{l}\sum_{d | l} \mu(l/d)p_d(\mathbf{y}) \choose r}
\end{eqnarray*}
The $\tilde{s}_\lambda$ are polynomials in the elementary symmetric functions such that if the $i$-th elementary symmetric function is replaced with the $i$-th exterior power of the permutation representation of $S_n$, and the multiplication is taken to be the tensor product of $S_n$-representations, then for $n$ sufficiently large, the virtual representation we obtain is the Specht module $\mathcal{S}^{\lambda[n]}$ (this also implies that the characters are obtained by evaluating these symmetric functions at suitable roots of unity, as discussed in \cite{OZ2}). Thus the $\tilde{s}_\lambda$ are fundamental objects in the asymptotic representation theory of symmetric groups. A combinatorial description of them is given in \cite{OZ2}. By comparing the above generating function with their Proposition 11, combined with the description of character polynomials in Example 14 of Section 7 of \cite{macdonald} makes it clear that these are indeed the same symmetric functions.

\section{Generalisation to Tensor Categories}
\noindent
All our results thus far are valid in the setting where $\mathcal{C}$ is a ring category, as per Definition 4.2.3 of \cite{EGNO}, and in particular for any tensor category. That is, $\mathcal{C}$ is an essentially small, locally finite $k$-linear abelian monodial category satisfying two conditions. Firstly, if $\bf{1}$ is the unit object, then $\End_{\mathcal{C}}(\textbf{1}) = k$. Secondly, the product in $\mathcal{C}$ is exact in both arguments and bilinear with respect to direct sums. The essentially small property allows the construction of the Grothendieck group $\Groth$, whilst the artinian property implies that the $\Groth$ is the free abelian group generated by isomorphism classes of simple objects. The exactness of the product in the category implies that it respects the relations of the Grothendieck group and therefore descends to a bilinear distributive multiplication on $\Groth$. Thus, $\Groth$, inherits the structure of a ring. Due to a theorem of Takeuchi, an essentially small $k$-linear Artinian abelian category (in particular, our $\mathcal{C}$) is equivalent to $C$-comod for some coalgebra $C$ over $k$ \cite{takeuchi}.
\newline \newline \noindent
The category of finite-dimensional modules for a bialgebra over $k$ is an example of a ring category (as is the category of finite-dimensional comodules). Generalising this example, the category of finite-dimensional modules over a quasibialgebra is also a ring category.
\newline \newline \noindent
In order to construct wreath product categories, we make use of Deligne's tensor product for categories, which we briefly describe. If $\mathcal{C}_1$ and $\mathcal{C}_2$ are $k$-linear artinian categories, then their tensor product, $\mathcal{C}_1 \boxtimes \mathcal{C}_2$ is another artinian category. It is equipped with a bifunctor $\boxtimes: \mathcal{C}_1 \times \mathcal{C}_2 \to \mathcal{C}_1 \boxtimes \mathcal{C}_2$ satisfying a certain universal property; details can be found in \cite{EGNO}. For our purposes, it suffices to know several properties. Firstly, simple objects in $\mathcal{C}_1 \boxtimes \mathcal{C}_2$ are precisely those of the form $S_1 \boxtimes S_2$ where $S_1$ and $S_2$ are simple objects of $\mathcal{C}_1$ and $\mathcal{C}_2$, respectively. This is a consequence of the fact that if $C_1$ and $C_2$ are coalgebras such that $\mathcal{C}_1$ is equivalent to $C_1-comod$ and $\mathcal{C}_2$ is equivalent to $C_2-comod$, then $\mathcal{C}_1 \boxtimes \mathcal{C}_2$ is equivalent to $(C_1 \otimes C_2)-comod$. Secondly, if $\mathcal{C}_1$ and $\mathcal{C}_2$ are tensor categories, then so is $\mathcal{C}_1 \boxtimes \mathcal{C}_2$, with tensor structure arising from $(X_1 \boxtimes Y_1) \otimes (X_2 \boxtimes Y_2) = (X_1 \otimes X_2) \boxtimes (Y_1 \otimes Y_2)$.
\begin{example}
If $A_1$ and $A_2$ are finite-dimensional $k$-algebras, then $(A_1-mod) \boxtimes (A_2-mod) = (A_1 \otimes_k A_2)-mod$.
\end{example}
\noindent
We may form the $n$-fold Deligne's tensor product of $\mathcal{C}$ which is itself a ring category, which we denote $\mathcal{C}^{\boxtimes n}$. 
\begin{definition}
The equivariantisation of $\mathcal{C}^{\boxtimes n}$ under the natural action of $S_n$ is the \emph{wreath product category} $\Wreath{n} = (\mathcal{C}^{\boxtimes n})^{S_n}$. If $\mathcal{C}$ is a ring category,then $\Wreath{n}$ obtains the structure of a ring category.
\end{definition}
\begin{example}
If $A$ is a finite-dimensional $k$-algebra then $\mathcal{W}_n(A-mod)$ is equivalent to $(A \wr S_n)-mod$, the category of finite-dimensional modules for the wreath product (although $A$ would need some additional structure for $A-mod$ to be a ring category).
\end{example}
\noindent
There is a theory of induction and restriction completely analogous to the theory discussed for finite groups. If a group $G$ acts on objects of $\mathcal{C}$, so does any subgroup, via restriction. Following Section 3.2 of \cite{Mori}, if $\mathcal{D}$ is an additive category, then for any subgroup $H$ of finite index in $G$, we have a forgetful functor $\Res{H}{G}: \mathcal{D}^G \to \mathcal{D}^H$. Additionally there is an induction functor $\Ind{H}{G}: \mathcal{D}^H \to \mathcal{D}^G$ which is both right adjoint and left adjoint to $\Res{H}{G}$. The induction functor may be written as a sum over coset representatives of $H$ in $G$ as follows:
\[
\Ind{H}{G}(M) = \bigoplus_{g \in G/H} gM
\]
In the above formula, the action of $G$ is analogous to that of an induced representation of a finite group.
\newline \newline \noindent
An identical classification of simple objects (i.e. specific objects induced from Young subgroups in the sense described above) of $\Wreath{n}$ holds in greater generality. In \cite{Mori}, this is shown in the context of indecomposable objects of an additive category, but the proof in our setting is analogous.

\bibliographystyle{alpha}
\bibliography{wreath_categories.bib}

\end{document}